\newtheorem{theorem}{Theorem}[]
\newtheorem{lemma}[theorem]{Lemma}
\newtheorem{corollary}[theorem]{Corollary}
\theoremstyle{definition}
\newtheorem{definition}[theorem]{Definition}
\theoremstyle{remark}
\newtheorem{remark}[theorem]{Remark}
\numberwithin{equation}{section}
\newcommand{\partiald}[2]{  \frac{\partial #1 }{\partial #2}}
\newcommand{\partialdd}[2]{ \partial #1/ \partial #2}
\newcommand{\derivative}[2]{ \frac{\mathrm{d} #1 }{\mathrm{d} #2}}
\newcommand{\av}[1]{\left\vert #1\right\vert}
\newcommand{\aV}[1]{\left\Vert #1\right\Vert}
\newcommand{\R}{\mathbb{R}}
\newcommand{\RRe }{\mathrm{Re}\,}
\newcommand{\dd}{\,\mathrm{d}}
\providecommand{\eat}[1]{}
\newcommand{\Hm}[1]{\leavevmode{\marginpar{\tiny%
			$\hbox to 0mm{\hspace*{-0.5mm}$\leftarrow$\hss}%
			\vcenter{\vrule depth 0.1mm height 0.1mm width \the\marginparwidth}%
			\hbox to 0mm{\hss$\rightarrow$\hspace*{-0.5mm}}$\\\relax\raggedright
			#1}}}
\begin{document}
	
	\title{Curvature Estimates of Nodal Sets of Harmonic Functions in the Plane}
	
	\author{Jin Sun}
	\address{Jin Sun, School of Mathematical Sciences, Fudan University, 200433, Shanghai, China}
	\email{jsun22@m.fudan.edu.cn}
	
	
	
	\begin{abstract}
		In this paper, we study curvature estimates for nodal sets of harmonic functions in the plane. We prove that at any point $p$, the curvature of each nodal curve of any harmonic function $u$ is upper bounded by
		\begin{eqnarray*}
			\av{\kappa(u)(p)}\leq \frac{4(n+1)}{nr}\cos n\alpha_0, 
		\end{eqnarray*}
		where $u$ has only $n$ nodal curves in $B_r(p)$ intersecting at $p$, and $\alpha_0=0$ for odd $n$ or $\alpha_0=\frac{\pi}{2n(n+1)}$ for even $n$. This result is sharp for all $n\geq 1$. In extreme cases, $u$ can be given by the Poisson extension of Dirac measure and its derivatives. Moreover, the curvature of any nodal curve is uniformly upper bounded at every point in the nodal set of $u$ in a small neighborhood $B_{cr}(p)$, where $c<1$ depends only on $n$. Furthermore, with the frequency tool, we prove that the area of the positive part and the negative part of $u$ have a uniform lower bound, which depends only on the number of nodal domains in $B_r(p)$.
	\end{abstract} 
	\maketitle
	
	\section{Introduction}
	
	The study of nodal sets of harmonic functions is an active research area in the geometric theory of PDEs dating back to  origins of potential theory. 
	Analytically, the length of nodal curves of solutions of elliptic equations in the plane is bounded by area of the region and the gradient of solutions \cite{AG}. For hamonic functions on $\mathbb{R}^n$ ($n\geq2$), Hausdorff measure of nodal sets is bounded by the frequency of harmonic functions \cite{HL}.  Geometrically, in $\R^2$, the nodal set of any harmonic function $u$ at some point $p$ can be locally biholomorphic to $\{z\in\mathbb{C}|Re\,z^n=0\}$, where $n$ is the vanishing order of $u$ at $p$, and the nodal set is composed of some maximal analytic curves \cite{WWZ}, which are called nodal curves. In particular, the nodal set of any harmonic function at any non-critical point $p$ is a nodal curve. Moreover,  \"{U}. Kuran first proved in 1969 that at any non-critical point $p$, the curvature of the nodal curve of a harmonic function is upper bounded and the bound depends only on the distance from $p$ to the boundary and to other nodal curves, see  \cite{K}. De Carli and Hudson gave another proof in \cite{CH}. Stefan Steinerberger discovered the equivalent condition when the equality holds in the estimate \cite{SS}. Some subtle examples of nodal sets of harmonic functions were shown in \cites{F,FDH,EAD}. Besides, topological and geometric properties of nodal sets of elliptical PDEs were given in \cites{SP,BM,KW}.
	
	For hamonic functions on $\mathbb{R}^n$ ($n\geq2$), a positive lower bound for the principal curvature of strict convex nodal sets has been proved in \cite{CAMY}. More convexity results related to solutions of linear or nonlinear elliptical PDEs can be found in  \cites{KB,BBJ,CA,GRM}. There are also some remarkable results about solutions of elliptical PDEs on annuli, see \cites{DNJ,JJMO}. However, the results mentioned above are about non-critical points, while few results related to critical points were proved.
	
	This paper aims to show that in $\R^2$, even at any critical point $p$ of a harmonic function, the curvature of any nodal curve at that point can be bounded, which is the main result of the paper, see Theorem \ref{Main_theorem}. Besides, the extreme case when the curvature attains its maximum is unique to some extent. Moreover, the curvature at any point of any nodal curve in a small neighborhood of $p$ can also be bounded. This geometric property tells us that the nodal set cannot ‘bend very much’, corresponding to the analytic property that the Hausdorff measure of nodal sets is bounded. In addition, using the fact that the frequency of $u$ is bounded by the number of nodal domains and the result proven by Logunov and Malinnikova (\cite{logunov2018nodal}), this geometric property means that the area of the positive part and the negative part of $u$ can be bounded below, giving area estimates. 

	We assume that a harmonic function $u$ is defined on a connected region $\Omega\subset\R^2$, and $u$ is not a constant. The vanishing order of $u$ at a point $p\in\Omega$ is denoted by $n(u,p)$, and there will be $n=n(u,p)$ nodal curves intersecting at $p$. The nodal set of $u$ is denoted by $Z(u)$. We identify $\R^2$ with $\mathbb{C}$, so that in any ball $B_r(p)\subset\Omega$, we can find a holomophic function $w$ satisfying
	$$w(z)=u(z)+iv(z),$$
	where $v(z)$ is the conjugate function of $u$ in $B_r(p)$. 
	
	The partial derivatives $\partialdd{u}{x}$ and $\partialdd{u}{y}$ are abbreviated as $u_x$ and $u_y$, respectively. Let $\kappa(u)|_{\gamma_k}(p)$ be the curvature of some nodal curve $\gamma_k\subset Z(u)$ of $u$ at the point $p$. 
	
	Our main result gives a sharp curvature estimate for $p\in Z(u)$ with vanishing order $n(u,p)$. The main ingredient in the proof of the following main theorem is the precise growth estimates of Fourier coefficients of the boundary value $g=u|_{S^1}$. This is a new method for dealing with curvature estimates, and is a different method from the use of mass transport in the proof of Stefan Steinerberger \cite{SS}, which can be useful only for $n=1$.
	
	For convenience, we introduce a definition:
	\begin{definition}\label{def1}
		We say a distribution $T\in (C^\infty(S^1))^\prime$ almost changes its sign for $2n$ times on $S^1$, if there exists some function $g_0\in C^\infty(S^1)$ with $2n$ zeros, counted with multiplicity, such that for every smooth function $f\geq 0$,
		\begin{eqnarray}\label{change_sign}
			(g_0T)(f) = T(g_0f) \geq 0.
		\end{eqnarray} 
	\end{definition}
	In fact, (\ref{change_sign}) is equivalent to say $g_0T\geq 0$ in the sense of $(C^\infty(S^1))^\prime$. In light of the equivalence of positive distributions and positive measures \cite{strichartz2003guide}, this tells us that there is a positive locally finite measure $\mu$ such that 
	\begin{eqnarray}
		(g_0T)(f) = \int_{S^1} f\dd\mu.
	\end{eqnarray} 
	
	And for the case $T=g\in L^1(S^1)$, (\ref{change_sign}) is equivalent to
	\begin{eqnarray}\label{change_signL}
		\frac{1}{2\pi}\int_{0}^{2\pi} f(\theta)g_0(\theta)g(\theta) \dd\theta\geq 0.
	\end{eqnarray}
	
	Then we derive the following main theorem. 
	\begin{theorem}\label{Main_theorem}
		Let $u:B_{r_0}(0)\subseteq \R^2 \rightarrow \R$ be a non-constant harmonic function with boundary value $u|_{S^1_{r_0}}\in L^1(S^1_{r_0})$, such that $u(0)=0$. Assume $u$ has only $n$ nodal curves that intersect at $0$ in $B_{r_0}(0)$.
		Then for every nodal curve $\gamma_k\subset Z(u)$,
		\begin{eqnarray}\label{e1}
			\bigg|\kappa(u)|_{\gamma_k}(0)\bigg|\leq \frac{4(n+1)}{nr_0}.
		\end{eqnarray}
		This equality is sharp when $n$ is odd. Moreover, when $n$ is even, the upper bound becomes 
		\begin{eqnarray}\label{e2}
			\bigg|\kappa(u)|_{\gamma_k}(0)\bigg|\leq \frac{4(n+1)}{nr_0}\cos\frac{\pi}{2(n+1)},
		\end{eqnarray}
		which is also sharp.
	\end{theorem}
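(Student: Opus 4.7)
My plan proceeds in four stages. First I reduce to $r_0 = 1$ by the scaling $u(z) \mapsto u(r_0 z)$, under which curvatures at $0$ transform by a factor of $r_0$, and rotate coordinates to align the chosen nodal curve $\gamma_k$ with the positive $x$-axis at the origin. The polar Fourier expansion becomes
\[
u(r,\theta) = \sum_{k \ge n} r^k\bigl(a_k\cos k\theta + b_k\sin k\theta\bigr), \qquad a_n = 0,\ b_n > 0.
\]
Parametrizing $\gamma_k$ locally as $y = \phi(x)$ with $\phi(0)=\phi'(0)=0$, substituting into $u(x,\phi(x)) = 0$ and balancing the leading $r^n$ and $r^{n+1}$ contributions yields $\phi(x) = -\frac{a_{n+1}}{nb_n}x^2 + O(x^3)$, whence $\bigl|\kappa(u)|_{\gamma_k}(0)\bigr| = 2|a_{n+1}|/(nb_n)$. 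The theorem then reduces to proving the Fourier-coefficient estimate $|a_{n+1}|/b_n \le 2(n+1)\cos(n\alpha_0)$.

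Next I convert the geometric hypothesis into a positivity on the boundary. Since all $n$ nodal curves of $u$ in $B_1$ pass through $0$ and $u$ alternates sign across them, the boundary trace $g = u|_{S^1}$ has exactly $2n$ sign changes on $S^1$; by Definition~\ref{def1} there exists a smooth $g_0$ with $2n$ zeros counted with multiplicity such that $g_0 g \ge 0$ as a distribution, so that $h := g_0 g$ represents a nonnegative Borel measure $\mu$ on $S^1$.

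The main step is the Fourier analysis of $h$. From the convolution identity $\widehat h(k) = \sum_j \widehat{g_0}(j)\,\widehat g(k-j)$ together with the structural vanishing $\widehat g(k) = 0$ for $|k|<n$, one can express $\widehat h(0)$ and $\widehat h(\pm 1)$ entirely in terms of $b_n, a_{n+1}, b_{n+1}$; positive-semidefiniteness of the Toeplitz matrix of $\widehat h$ (Bochner--Herglotz) then yields a constraint on these quantities. The crucial choice is of $g_0$: concentrating all $2n$ zeros of $g_0$ at a single point $\theta_* \in S^1$, for example $g_0(\theta) = (1 - \cos(\theta - \theta_*))^n$, collapses the extremal $g$ to a distribution supported at $\theta_*$, namely $g = \sum_{j=0}^{2n} c_j\,\delta_{\theta_*}^{(j)}$, with the positivity $\mu \ge 0$ equivalent to $c_{2n} \ge 0$. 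The $2n-1$ real vanishing-order conditions $\widehat g(k) = 0$ for $k = 0, \ldots, n-1$ cut the $(2n+1)$-parameter family down to two degrees of freedom, and a direct computation of $\widehat g(n)$, $\widehat g(n+1)$ followed by optimization over the remaining parameter and over $\theta_*$ yields $|a_{n+1}|/b_n \le 2(n+1)$. For even $n$ the optimal $\theta_*$ is forced off the naive antipodal position by a parity obstruction; the true maximizer lies at a shift $\alpha_0 = \pi/(2n(n+1))$, and the corresponding cosine factor $\cos(n\alpha_0) = \cos(\pi/(2(n+1)))$ appears in~\eqref{e2}.

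The main obstacle is this sharp Fourier estimate. A direct application of $|\widehat h(1)| \le \widehat h(0)$ that does not exploit the high multiplicity of the zero of $g_0$ only yields $|a_{n+1}| \le 2b_n$, which is weaker than what is needed by a factor of $n+1$. Obtaining the sharp factor depends on the observation that the multiple-zero structure of $g_0$, combined with $\widehat g(k) = 0$ for $|k|<n$, collapses the extremal onto the single Dirac derivative of order $2n$ at $\theta_*$, and the derivative amplification contributes the missing $(n+1)$ at the frequency $n+1$. Sharpness is verified by running the construction in reverse: the extremal $u$ is the Poisson extension of the specific finite combination of $\delta_{\theta_*}^{(j)}$ determined by the vanishing-order and positivity conditions, and the direct Fourier expansion realizes equality in \eqref{e1} or \eqref{e2}.
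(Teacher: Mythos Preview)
Your reduction is correct and matches the paper: scaling to $r_0=1$, the curvature formula $|\kappa|=2|a_{n+1}|/(nb_n)$ (which is the paper's Lemma~\ref{L3.12} in real coordinates), and the passage to the positivity $g_0g\ge 0$ via Lemma~\ref{l9}. The gap is in your main Fourier step, and it stems from a misunderstanding of the role of $g_0$.

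First, $g_0$ is not a choice: it is determined, up to a positive scalar, by the sign-change locus of $g=u|_{S^1}$. For a generic $u$ those $2n$ sign changes are distinct, so $g_0=\prod_{j=1}^{n}(\cos(\theta-\varphi_j)+c_j)$ with arbitrary $\varphi_j$ and $c_j\in[-1,0]$. Concentrating the zeros at a single $\theta_*$ describes only the \emph{extremal} $g$, and analyzing the extremal does not prove the inequality for all $g$. Second, your diagnosis that the naive bound $|\widehat h(1)|\le\widehat h(0)$ yields only $|a_{n+1}|\le 2b_n$ is incorrect: that very inequality, applied with the \emph{general} $g_0$ above, already gives the sharp factor. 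Indeed, since $\widehat g(k)=0$ for $|k|<n$ and $g_0$ has Fourier support in $[-n,n]$, one has $\widehat h(0)=2\RRe\bigl(\widehat{g_0}(-n)\widehat g(n)\bigr)$ while $\widehat h(1)=\widehat{g_0}(-n)\widehat g(n+1)+\widehat{g_0}(1-n)\widehat g(n)$. The cross term $\widehat{g_0}(1-n)\widehat g(n)$ is not noise to be discarded; its size relative to $\widehat{g_0}(-n)\widehat g(n)$ is bounded by $2n$ (because $g_0$ is a product of $n$ factors and $|c_j|\le 1$), and after a triangle inequality this produces exactly $|a_{n+1}|\le 2(n+1)b_n$. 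This is the paper's computation in \eqref{30}--\eqref{31}. So the factor $n+1$ does not come from any multiple-zero structure of $g_0$; it comes from the degree of $g_0$ combined with the vanishing $\widehat g(k)=0$ for $|k|<n$.

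For even $n$ your sketch is too vague to be a proof. What is actually needed is a bound not on the full modulus $|a_{n+1}-ib_{n+1}|$ but on the specific projection $a_{n+1}$ that enters $\kappa$. The paper obtains this by projecting the complex inequality \eqref{30} onto the direction $e^{i(n+1)\eta_q}$, then applying concavity of cosine (Jensen) to the resulting sum $\cos\bar\varphi+\sum_j\cos(-\varphi_j+(n+1)\eta_q+\iota_j\pi)$; an arithmetic parity check shows the averaged argument can never be a multiple of $\pi$, and the closest it can get is $\pi/(2(n+1))$, yielding the factor $\cos\frac{\pi}{2(n+1)}$. Your phrase ``parity obstruction forces $\theta_*$ off the antipodal position'' points in the right direction but does not supply this argument.
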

	
	Generally, let $\Omega$ be a region, $u:\Omega\subseteq \R^2 \rightarrow \R$ be a non-constant harmonic function satisfying $u(p)=0$, and $r_0$ be the maximal radius satisfying that, for all $r<r_0$, $u$ has only $n$ nodal curves intersecting at $p$ in $B_r(p)\subset\Omega$. Then the curvature of every nodal curve of $Z(u)$ at $p$ will satisfy (\ref{e1}) or (\ref{e2}), according to $n(u,p)$. Thus, the upper bound depends on $d(p,\partial\Omega)$ and the distance between $p$ and other nodal curves. Therefore, this estimate is a local and precise result.
	
	In fact, we can give a characterization for the equality case in Theorem \ref{Main_theorem}.
	\begin{theorem}\label{extremer_case}
		Assume $u$ is the Poisson extension of a distribution $T$, which almost changes its sign for $2n$ times, and $u$ has $n$ nodal curves intersecting at $0$ in $B_1(0)$. After multiplying a constant and rotation, if $u=\RRe w$ is an extremer of inequality (\ref{e1}) or inequality (\ref{e2}), then,
		\begin{eqnarray}\label{extremers}
			{w}(z)=\frac{z^n }{(1-e^{-i\varphi_0}z)^{2n}}+\frac{2z^{n+1}}{(1-e^{-i\varphi_0}z)^{2n+1}}e^{-i(n+1)\varphi_0}\cos n\varphi_0,
		\end{eqnarray}	
		where $\varphi_0$ is given by 
		\begin{eqnarray*}
			\varphi_0=\frac{k\pi}{n}+\frac{\pi}{2n(n+1)}l,
		\end{eqnarray*}
		where $l=0$ if $n$ is odd and $l=1$ if $n$ is even, and $0\leq k\leq n-1$ is an integer.
	\end{theorem}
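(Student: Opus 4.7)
The plan is to reverse-engineer the equality case in the Fourier coefficient estimates that drive Theorem \ref{Main_theorem}. Expanding the holomorphic extension as $w(z) = c_n z^n + c_{n+1} z^{n+1} + O(z^{n+2})$ with $c_n \neq 0$, a direct computation expresses the curvature at the origin of any nodal curve of $\RRe w$ purely in terms of $n$, the angular direction of the ray, and the ratio $c_{n+1}/c_n$. Consequently, saturating (\ref{e1}) or (\ref{e2}) is equivalent to $|c_{n+1}|/|c_n|$ attaining a specific value together with a phase alignment, which via the Poisson representation translates into a rigid equality between the Fourier coefficients $\hat T(n)$ and $\hat T(n+1)$ of the boundary distribution.

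Next, I will exploit the hypothesis that $T$ almost changes sign $2n$ times. Fix $g_0 \in C^\infty(S^1)$ with exactly $2n$ zeros so that $\mu := g_0 T$ is a nonnegative finite measure on $S^1$. After a rotation, one can arrange that $g_0$ vanishes in a pattern aligned with the angular structure of the leading term $c_n z^n$, so that multiplying the Fourier identities $\widehat{g_0 T}(k) = \hat\mu(k)$ against the equality from the first paragraph produces a sharp bound of the form $\hat\mu(0) \geq |\hat\mu(k)|$ that is saturated. This extremal condition on a nonnegative measure is known to force $\mu$ to collapse to a single atom $\lambda \delta_{\varphi_0}$ located at one of the zeros of $g_0$. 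The combinatorial requirement that the $n$ nodal directions at the origin remain distinct then quantizes the admissible $\varphi_0$ to the discrete values $k\pi/n + \pi l/[2n(n+1)]$ listed in the theorem, with $l=0$ for odd $n$ and $l=1$ for even $n$ reflecting the different phase constraints in (\ref{e1}) versus (\ref{e2}).

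Finally, with $g_0 T = \lambda\delta_{\varphi_0}$ determined, I solve for $T$ explicitly. Since $g_0$ vanishes at $e^{i\varphi_0}$ to some finite order $m$, $T$ decomposes as a linear combination of $\delta_{\varphi_0}$ and its derivatives $\delta_{\varphi_0}^{(j)}$ for $0 \leq j \leq m-1$; the Poisson extensions of these distributions are exactly the real parts of terms of the form $z^j/(1 - e^{-i\varphi_0} z)^{j+1}$. Imposing the vanishing order $n$ at the origin together with the already-determined ratio $c_{n+1}/c_n$ eliminates all but the two terms appearing in (\ref{extremers}), and the prefactor $\cos n\varphi_0$ on the subleading term should drop out naturally as the ratio of appropriate derivatives of $g_0$ at the multiple root $e^{i\varphi_0}$. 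The main obstacle I anticipate is the concentration step in the second paragraph: ruling out measures supported at two or more zeros of $g_0$ will most likely require a sharp rigidity statement of Cauchy--Schwarz type on $L^2(\dd\mu)$, combined with the nodal-curve count to exclude symmetric multi-atom configurations that would otherwise be compatible with the Fourier equality alone.
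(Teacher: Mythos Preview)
Your outline has the right architecture (concentrate $g_0T$, then identify $T$ as a finite jet of Diracs), but the second paragraph contains a real gap that the paper resolves differently.

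You treat $g_0$ as a fixed trigonometric polynomial with $2n$ zeros and then worry about excluding multi-atom configurations of $\mu=g_0T$ via ``Cauchy--Schwarz rigidity plus nodal-curve count.'' In the paper this issue never arises, because the \emph{first} step is to go back into the chain of inequalities from the proof of Theorem~\ref{Main_theorem} (equations (\ref{32})--(\ref{33}) and (\ref{onlyif1}) for odd $n$, (\ref{420})--(\ref{422}) for even $n$) and read off that equality forces $c_j=-1$ and $\varphi_j\equiv\varphi_0$ for all $j$. Hence $g_0(\theta)=\pm(1-\cos(\theta-\varphi_0))^n$ has a \emph{single} zero of multiplicity $2n$, and the concentration step is then immediate: equality in (\ref{integrate}) with $k=1$ gives $T\big((1-\cos(\theta-\varphi_0))g_0\big)=0$, and a short convexity argument shows $T(g_0f)=0$ for every nonnegative $f$ vanishing at $\varphi_0$, so $g_0T$ is supported at the unique point $\varphi_0$. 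Your proposed multi-atom exclusion, by contrast, would have to rule out e.g.\ a measure $\mu$ split between two distinct zeros of a generic $g_0$; I do not see how a Cauchy--Schwarz argument on $L^2(\dd\mu)$ does this without already knowing the zeros coincide.

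Two smaller points. First, once $T=\sum_{j} b_j D^j\delta_{\varphi_0}$, the paper bounds the order by $2n$ via direct testing against $(1-\cos(\theta-\varphi_0))^{n+q}$ and $\sin(\theta-\varphi_0)(1-\cos(\theta-\varphi_0))^{n+q-1}$, and then \emph{all} derivatives $1\le j\le 2n$ are generically present; the two-term closed form (\ref{extremers}) emerges only after solving the Vandermonde systems (\ref{aaaaaaa})--(\ref{bbbbbbb}) and summing the resulting power series. Your claim that ``all but two terms'' are eliminated by the vanishing-order and ratio constraints is not correct at the level of Dirac derivatives. Second, the paper closes by verifying that the candidate $w$ actually has only $n$ nodal curves in $B_1(0)$, via a local change of variables $Z$ near the singular boundary point $e^{i\varphi_0}$ and a count of derivative zeros on $S^1$; your proposal omits this, but it is needed to confirm the extremer satisfies the hypotheses.
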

	
	Furthermore, with growth estimates of every Fourier coefficient of $u$ on the boundary, our paper proves that the curvature of every nodal curve of $u$ in a small neighborhood of $0$ is bounded, which depends only on the vanishing order $n(u,0)$.  
	\begin{theorem}\label{theorem2}
		Let $u:B_{r_0}(0)\subseteq \R^2 \rightarrow \R$ be a non-constant harmonic function with the boundary value $u|_{S^1_{r_0}}\in L^1(S^1_{r_0})$, such that $u(0)=0$. Assume $u$ has only $n$ nodal curves intersecting at $0$ in $B_{r_0}(0)$.
		Then there is a constant $c(n)<1$ and $C(n)$, depending on $n$, such that
		\begin{eqnarray}\label{e3}
			\bigg|\kappa(u)(z)\bigg|\leq \frac{C(n)}{r_0}, \qquad \forall z\in (Z(u)\backslash\{0\})\cap B_{c(n)r_0}(0).
		\end{eqnarray}
	\end{theorem}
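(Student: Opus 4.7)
The plan is to exploit the factorization $w(z) = z^n\phi(z)$, where $\phi$ is holomorphic on $B_{r_0}(0)$ with $\phi(0) = a_n \neq 0$, together with the Taylor coefficient growth estimates produced in the proof of Theorem \ref{Main_theorem}. By rescaling $z \mapsto z/r_0$ (curvature scales as $1/r_0$), it suffices to assume $r_0 = 1$. The hypothesis that the boundary value of $u$ almost changes sign $2n$ times produces bounds of the form $|a_{n+k}| \leq M(n,k)|a_n|$ for $k \geq 0$. These in turn yield, on some disk $B_{c(n)}(0)$ with $c(n) \in (0,1)$ depending only on $n$, the pointwise estimates $|\phi(z)| \geq |a_n|/2$ and $|\phi'(z)|,|\phi''(z)|,|\phi'''(z)| \leq C_1(n)|a_n|$. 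Consequently $w'(z) = z^{n-1}(n\phi + z\phi')$ has no zero in $B_{c(n)}(0)\setminus\{0\}$, so $u$ has no critical points there; and considering $\arg(z^n\phi(z))$, every nodal curve of $u$ in $B_{c(n)}(0)$ passes through $0$.

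Next, using $u_{yy} = -u_{xx}$ and the Cauchy--Riemann equations, the curvature of a level set of $u$ admits the holomorphic expression $\kappa(u)(z) = \pm\mathrm{Re}(w''(z)\,\overline{w'(z)}^2)/|w'(z)|^3$. Writing $w' = z^{n-1}A$, $w'' = z^{n-2}B$ with $A = n\phi + z\phi'$, $B = n(n-1)\phi + 2nz\phi' + z^2\phi''$, and $z = re^{i\theta}$, a direct computation gives
\begin{equation*}
\kappa(u)(re^{i\theta}) \;=\; \pm\,\frac{\mathrm{Re}\!\left(e^{-in\theta}\,B(z)\,\overline{A(z)}^2\right)}{r\,|A(z)|^3}.
\end{equation*}
The denominator is uniformly bounded below on $B_{c(n)}(0)\setminus\{0\}$ by the first step, so the remaining task is to handle the apparent $1/r$ singularity in the numerator.

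The key point is that this numerator is a controlled perturbation of the nodal defining function $F(r,\theta) := \mathrm{Re}(e^{in\theta}\phi(re^{i\theta}))$, which vanishes identically on the nodal set. A direct calculation shows that at $r = 0$ the numerator equals $n^3(n-1)|a_n|^2 F(0,\theta)$ for every $\theta$, so the error
\begin{equation*}
E(r,\theta) \;:=\; \mathrm{Re}\!\left(e^{-in\theta}B(z)\overline{A(z)}^2\right) \,-\, n^3(n-1)|a_n|^2\,F(r,\theta)
\end{equation*}
satisfies $E(0,\theta) \equiv 0$. Using the $C^3$ control of $\phi$ from the first step and the fundamental theorem of calculus in $r$, one obtains $|E(r,\theta)| \leq C_2(n)|a_n|^3\, r$ throughout $B_{c(n)}(0)$. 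On the nodal set, where $F = 0$, the numerator reduces to $E(r,\theta)$, and cancellation with the $1/r$ in the denominator produces the uniform bound $|\kappa(u)(z)| \leq C(n)$; restoring the scale yields $C(n)/r_0$. The main obstacle will be extracting the coefficient bounds $M(n,k)$ from the proof of Theorem \ref{Main_theorem} in a quantitative enough form to provide uniform $C^3$ control of $\phi$ on an explicit disk $B_{c(n)}(0)$, rather than merely qualitative control; once that is in hand, the cancellation identity $E(0,\theta) \equiv 0$ is an algebraic fact that does the rest of the work.
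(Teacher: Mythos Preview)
Your proposal is correct and follows essentially the same route as the paper's own proof: reduce to $r_0=1$, invoke the polynomial growth estimate $|a_k|\le 2^{n+2}n\,k^{2n}|a_n|$ on the Fourier coefficients (this is precisely Lemma~\ref{L1}, proved via Lemma~\ref{l9} and used in the paper independently of Theorem~\ref{Main_theorem}), feed it into the holomorphic curvature formula $\kappa=\RRe(w''\overline{w'}^2)/|w'|^3$, and cancel the apparent $1/r$ singularity using the nodal condition $\RRe(z^n\phi(z))=0$. The only difference is organizational: the paper expands the numerator term by term and substitutes the nodal identity $\RRe(z^n a_n e^{i\theta_n})=-\RRe\bigl(z^n\sum_{k\ge n+1}a_k e^{i\theta_k}z^{k-n}\bigr)$ directly into the leading piece, whereas you package the same cancellation as $E(0,\theta)\equiv 0$ and then integrate $\partial_r E$; both arguments exploit the identical algebraic fact and the identical coefficient bounds.
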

	
	With the growth estimates, we can derive that the doubling index $\mathcal{N}(0,{r})$ of $u$ in a ball $B_r(0)({r}<r_0/2)$ is upper bounded, and the bound depends on $n(u,0)$. Inspired by the conclusion of Logunov and Malinnikova (\cite{logunov2018nodal}), we can prove that the ratio of area of the positive part and area of the negative part of $u$ has upper and lower bounds. 
	\begin{theorem}\label{theorem3}
		Let $u:B_1(0)\subseteq \R^2 \rightarrow \R$ be a non-constant harmonic function satisfying $u(0)=0$ with $u|_{S^1}\in L^1(S^1)$, and $u$ has $N$ nodal domains in $B_1(0)$, then 
		\begin{eqnarray}\label{e4}
			c_0\leq\frac{\av{\{u>0\}\cap B_{r_0}(0)}}{\av{\{u<0\}\cap B_{r_0}(0)}}\leq c_0^{-1},
		\end{eqnarray}
		where $c_0$ is a universal constant and $r_0=r_0(N)<1/2$ depends only on $N$.
	\end{theorem}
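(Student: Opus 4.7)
The plan is to deduce Theorem \ref{theorem3} in three steps: first, to control the vanishing order $n=n(u,0)$ of $u$ at the origin by the number of nodal domains $N$; second, to use the growth estimates from the proof of Theorem \ref{theorem2} to show that on a sufficiently small scale $r_0=r_0(N)$, the harmonic function $u$ is well approximated by its leading homogeneous harmonic polynomial $\RRe(a_n z^n)$; and third, to conclude by the symmetry of this leading polynomial.

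For the first step, I would invoke the Courant-type fact highlighted in the introduction, that the frequency of $u$ at the origin is bounded above by a function of the number of nodal domains $N$ in $B_1(0)$. By frequency monotonicity and the identity $n(u,0)=\lim_{r\to 0}\mathcal{N}(0,r)$, this gives $n\leq G(N)$ for an explicit function $G$. Combining this with the Fourier-coefficient growth estimates developed for Theorems \ref{Main_theorem} and \ref{theorem2}, and invoked in the paragraph preceding Theorem \ref{theorem3}, yields a uniform upper bound $\mathcal{N}(0,r)\leq \Psi(n)\leq \Psi(G(N))$ for every $r<r_{\ast}(N)/2$, where $r_{\ast}(N)$ is the maximal radius on which only $n$ nodal curves meet at $0$.

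For the second and third steps, decompose $u(z)=\RRe(a_n z^n)+\RRe\sum_{k>n}a_k z^k$. The Fourier growth estimates give geometric decay of the higher coefficients on the scale set by $n$, so for $r_0=r_0(N)$ chosen small enough the remainder is dominated in $C^0(B_{r_0}(0))$ by $\ve\,|a_n|r_0^n$ for an arbitrarily small $\ve>0$. Since $\RRe(a_n z^n)$ has positive and negative parts of exactly equal area in every disc centered at $0$, a continuity-of-measure argument then shows
\begin{equation*}
	\bigl|\,|\{u>0\}\cap B_{r_0}(0)|-|\{u<0\}\cap B_{r_0}(0)|\,\bigr|\leq \ve'\,|B_{r_0}(0)|
\end{equation*}
once $r_0$ has been chosen small enough in terms of $N$. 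Taking $\ve'$ small forces the two parts to be within a universal constant of each other, from which the two-sided bound (\ref{e4}) follows with a universal $c_0$ (for instance $c_0=1/2$); the dependence on $N$ is absorbed entirely into the choice of $r_0$.

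The main obstacle I anticipate lies in the first step, the quantitative bound $n(u,0)\leq G(N)$. The naive combinatorial argument --- that the $2n$ alternating-sign sectors at $0$ inject into distinct nodal domains of $B_1(0)$ --- breaks down because non-adjacent same-sign sectors may merge outside a neighborhood of $0$ into a single global nodal domain, so one obtains only $N\geq 2$ this way. A robust remedy is a topological/Morse-theoretic count of the critical points of $u$ in $B_1(0)$, using the fact that nodal curves are analytic and cannot terminate in the interior; an alternative is to apply \cite{logunov2018nodal} directly to obtain area lower bounds with constants depending on $N$, and then exploit the small-scale asymptotic $u\sim\RRe(a_n z^n)$ to promote these to the universal $c_0$ at the cost of shrinking $r_0(N)$ further.
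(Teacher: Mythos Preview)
You correctly identify the crux as bounding the vanishing order (equivalently, the frequency or doubling index) in terms of $N$, but the resolution is simpler than either remedy you sketch. The paper runs the combinatorics in the \emph{opposite} direction from your ``naive'' attempt: rather than showing that the $2n$ local sectors at $0$ yield $2n$ distinct global domains (which, as you note, can fail by merging), one observes that $N$ nodal domains force at most $N-1$ nodal curves in $B_1(0)$, by the maximum principle. Since the $n(u,0)$ curves through the origin are among these, $n(u,0)\le N-1$; more importantly, by Lemma~\ref{l9} the boundary trace $g=u|_{S^1}$ almost changes sign at most $2(N-1)$ times, and this --- not the vanishing order per se --- is the hypothesis that feeds into Lemma~\ref{L1}. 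The growth estimate then reads $|a_k|\le C(N)k^{2(N-1)}A_{N-1}$ for all $k$, from which one bounds the frequency $\beta(r(N))\le 2$ at an explicit scale $r(N)$, hence the doubling index. No Morse theory is needed, and the obstruction you describe simply does not arise in this direction.

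For the conclusion the paper takes a different route from your polynomial-approximation scheme: having bounded the doubling index, it runs a Harnack-plus-gradient argument in the spirit of \cite{logunov2018nodal} (essentially your second proposed alternative, now made effective). One locates points $z_1,z_2\in\partial B_{r(N)/2}$ realizing $\sup u$ and $\sup(-u)$, then uses Harnack and the gradient estimate to show $u$ keeps sign on balls $B_{cr(N)}(z_j)$ of definite radius, yielding area lower bounds for both $\{u>0\}$ and $\{u<0\}$. Your approximation-by-$\RRe(a_nz^n)$ approach would also work once the gap above is closed, and in fact gives a sharper picture (ratio close to $1$); but the ``continuity of measure'' step would need to be made quantitative, e.g.\ via the angular measure of $\{|\cos(n\theta+\theta_n)|<\varepsilon\}$.
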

	
	In the last section, applying a holomorphic bijection $\psi:B_1(0)\rightarrow B_1(0)$, we will give generalized results of above theorems. That is, $u(0)=0$ can be replaced by $u(p)=0$, where $p\in B_1(0)$ is any fixed point, and we can derive similar results.
	
	\section{preliminaries}
	We first note that locally, $u$ is the real part of some analytic function $w(z)$. After setting $w(p)=u(p)$, the vanishing order $n(u,p)$ is just the vanishing order of $w$ at $p$ (Recall $w'(p)=u_x(p)-iu_y(p)$). So locally, there exists a complex constant $c$, so that $\mathrm{Re}\,c(z-p)^{n(u,p)}$ can be an asymptotic formula of $u(z)$ at $p$. Therefore, there are $n(u,p)$ curves intersecting at $p$, whose tangents divide $2\pi$ equally. Moreover, nodal curves are analytic and can be extended to maximal curves, whose ends extend to $\partial\Omega$ if $u\in C^\infty({\overline{\Omega}})$. See \cite{WWZ} for more details.
	
	When $n(u,p)=1$ in $B_r(p)$, we know that there is only one nodal curve across $p=(x_p,y_p)$. After a certain rotation, we can have $u_x(p)\neq0$ and take $x$ locally as a smooth function about $y$. By the implicit function theorem, if $u_x(x(y),y)\neq 0$, then we obtain a small neighborhood of $p$, in which
	$$x'(y)=-\frac{u_y(x(y),y)}{u_x(x(y),y)}.$$
	Differentiate the above formula, we get
	$$x''(y_p)=\frac{-u_y(p)^2u_{xx}(p)+2u_x(p)u_y(p)u_{xy}(p)-u_x(p)^2u_{yy}(p)}{u_x(p)^3}.$$
	It is well known that the curvature of the graph of a function $x=x(y) \in C^2(\R)$ at a point $p=(x(y),y)$ can be expressed as follow:
	$$\kappa(x)(p)=\frac{x''(y)}{\left(1+[x'(y)]^2\right)^{3/2}}.$$
	So, we have
	\begin{eqnarray}\label{E0}
		\quad\kappa(u)(p)=\frac{-u_y(p)^2u_{xx}(p)+2u_x(p)u_y(p)u_{xy}(p)-u_x(p)^2u_{yy}(p)}{\left(u_x(p)^2+u_y(p)^2\right)^{3/2}}.
	\end{eqnarray}
	In fact, the calculation above does not need $u_x(x(y),y)\neq 0$. 
	
	Through simple calculation and setting $u=\RRe w$, we can easily derive the following formula 
	\begin{eqnarray}\label{E3.3}
		\kappa(u)(p)=\av{w'(p)}\RRe\left(\frac{w''(p)}{(w'(p))^2}\right),
	\end{eqnarray}
	see \cite{RPJ} for more details.
	
	When $n(u,p)\geq2$, the formula (\ref{E3.3}) is meaningless as $w'(p)=0$. However, by Taylor expansion along some nodal curve $\gamma$, we have the following essential estimate:
	\begin{lemma}\label{L3.12}
		Let $u$ be a harmonic function defined in $B_r(0)$, with $u(z)=\RRe w(z)$, and satisfy $u(0)=0, \forall k\leq n-1,\,\nabla^k u(0)=0$ (as a tensor of order $k$), while $\nabla^n u(0)\neq 0$ (where $n=n(u,0)\geq1$ is the vanishing order of u at $0$). Let $\kappa$ be the curvature of some curve $\gamma$ of the nodal set at the origin. Then we have
		\begin{eqnarray}\label{E3.00}
			\av{\kappa(u)(0)}\leq
			\frac{2}{n^2+n}\av{\frac{w^{(n+1)}(0)}{w^{(n)}(0)}}.
		\end{eqnarray}	
		Moreover, the equality holds if and only if $$w^{(n+1)}(0)=\pm e^{-i(n+1)\eta_q}\av{w^{(n+1)}(0)},$$
		where
		\begin{eqnarray}\label{qqqqqq}
			\eta_q=\frac{q\pi+\frac{1}{2}\pi-arg(w^{(n)}(0))}{n},\qquad 0\leq q\leq 2n-1.
		\end{eqnarray}
	\end{lemma}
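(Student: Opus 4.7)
\emph{Proof plan.} The strategy is to parametrize the nodal curve $\gamma$ through $0$ in polar form $z(s)=se^{i\theta(s)}$ with $\theta(0)=\eta_q$ equal to one of the angles in (\ref{qqqqqq}), and read the curvature off from $\theta'(0)$. First expand
\[
w(z)=\frac{w^{(n)}(0)}{n!}z^{n}+\frac{w^{(n+1)}(0)}{(n+1)!}z^{n+1}+O(z^{n+2}),
\]
and write $w^{(n)}(0)=Ae^{i\alpha}$, $w^{(n+1)}(0)=Be^{i\beta}$. Substituting $z=se^{i\theta}$ and dividing $\RRe w$ by $s^{n}$ yields the smooth function
\[
F(s,\theta)=\frac{A}{n!}\cos(n\theta+\alpha)+\frac{B}{(n+1)!}\,s\cos((n+1)\theta+\beta)+O(s^{2}),
\]
and the nodal curve is locally $\{F=0\}$ near $s=0$. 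At $s=0$ the equation becomes $\cos(n\theta+\alpha)=0$, giving exactly the $2n$ solutions $\eta_q$ of (\ref{qqqqqq}); moreover $\partial_\theta F(0,\eta_q)=-(nA/n!)(-1)^{q}\neq 0$, so the implicit function theorem furnishes a smooth branch $\theta(s)$ with $\theta(0)=\eta_q$.

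Differentiating $F(s,\theta(s))=0$ once and evaluating at $s=0$ yields
\[
\theta'(0)=\frac{(-1)^{q}B}{n(n+1)A}\cos\bigl((n+1)\eta_q+\beta\bigr).
\]
For $z(s)=se^{i\theta(s)}$ one computes directly $z'(0)=e^{i\eta_q}$ and $z''(0)=2i\theta'(0)e^{i\eta_q}$, so the complex curvature formula $\kappa=\mathrm{Im}(\overline{z'}z'')/\av{z'}^{3}$ specializes to $\kappa(u)|_{\gamma}(0)=2\theta'(0)$. Bounding $\av{\cos(\cdot)}\leq 1$ then gives (\ref{E3.00}). Equality holds precisely when $\cos((n+1)\eta_q+\beta)=\pm 1$, i.e.\ $(n+1)\eta_q+\beta\in\pi\mathbb{Z}$, which is exactly the stated condition $w^{(n+1)}(0)=\pm e^{-i(n+1)\eta_q}\av{w^{(n+1)}(0)}$.

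The one delicate point is justifying the polar parametrization of $\gamma$; this reduces to the nondegeneracy of $\partial_\theta F$ at $(0,\eta_q)$, which holds automatically because $n\eta_q+\alpha$ is an odd multiple of $\pi/2$. Everything else is a Taylor expansion followed by an implicit function step, so I do not expect any further obstacle beyond correctly bookkeeping the phases. As a sanity check, the identity $\kappa=2\theta'(0)$ recovers the known formula (\ref{E3.3}) in the non-critical case $n=1$, where it reads $\kappa=\av{w'(0)}\RRe(w''(0)/w'(0)^{2})$ after substituting $A=\av{w'(0)}$ and $\eta_q=\pi/2-\arg w'(0)$.
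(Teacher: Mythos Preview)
Your argument is correct and reaches the same explicit formula for the curvature that the paper derives, namely (up to an orientation-dependent sign)
\[
\kappa(u)\big|_{\gamma}(0)=\pm\frac{2}{n^{2}+n}\,\frac{\RRe\bigl(e^{i(n+1)\eta_q}w^{(n+1)}(0)\bigr)}{\av{w^{(n)}(0)}},
\]
from which the inequality and the equality characterization follow immediately.

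The route, however, is genuinely different. The paper works with the non-critical curvature identity $\kappa=\av{w'}\RRe(w''/(w')^{2})$ at points $z=\gamma(t)$ near $0$, expands $w'(z)$ and $w''(z)$ to order $n$ and $n+1$, and then evaluates the limit $z\to 0$ along $\gamma$; this produces two terms $I$ and $II$ whose sum collapses to the formula above after invoking the tangential condition $\RRe(e^{in\eta_q}w^{(n)}(0))=0$. That computation also relies on the external fact (cited from \cite{WWZ}) that nodal curves are analytic, in order to justify that $\kappa(u)(0)$ equals the limit of curvatures along $\gamma$. Your approach bypasses both the limiting step and the analyticity citation: once $F(s,\theta)=u(se^{i\theta})/s^{n}$ is seen to be real-analytic near $(0,\eta_q)$ with $\partial_\theta F(0,\eta_q)\neq 0$, the implicit function theorem manufactures the smooth branch $\theta(s)$ directly, and the curvature is read off from $z''(0)=2i\theta'(0)e^{i\eta_q}$. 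This is shorter and more self-contained; the paper's approach, on the other hand, makes the continuity of $\kappa$ along $\gamma$ explicit and ties the critical case transparently to the established formula at regular points.

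One small remark on your sanity check: your convention $\kappa=\mathrm{Im}(\overline{z'}z'')/\av{z'}^{3}$ and the paper's formula $\kappa=\av{w'}\RRe(w''/(w')^{2})$ differ by an overall sign (they correspond to opposite orientations of the nodal curve), so the $n=1$ comparison matches only in absolute value. This is harmless for the lemma, which concerns $\av{\kappa}$.
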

	\begin{proof}
		By $Cauchy-Riemann$ equations, we have: $\partial w/\partial z=\partial w/\partial z+\partial w/\partial \bar{z}=\partial w/\partial x=u_x-iu_y$, therefore
		$$
		w^{(n)}=\frac{\partial^n w}{\partial z^n}=\partiald{^{n}u}{x^{n}}-i\partiald{^{n}u}{x^{n-1}\partial y}.
		$$ 
		Let $\gamma$ be parametrized by arc length, and $\gamma(0)=0$. By expension along $\gamma$ (i.e. $z=\gamma(t)$), we have
		\begin{eqnarray}\label{E3.10}
			0=\RRe w(\gamma(t))=\RRe \left(\frac{z^n}{n!}w^{(n)}(0)+\frac{z^{n+1}}{(n+1)!}w^{(n+1)}(0)+o(z^{n+1})\right),
		\end{eqnarray}
		where $o(z^{n+1})$ satisfies $\lim\limits_{z\rightarrow0}o(z^{n+1})/z^{n+1}=0$. Thus along $\gamma$:
		\begin{eqnarray*}
			\RRe \left(\frac{z^n}{n!}w^{(n)}(0)\right)=-\RRe \left(\frac{z^{n+1}}{(n+1)!}w^{(n+1)}(0)+o(z^{n+1})\right),\quad z=\gamma(t)\rightarrow 0.
		\end{eqnarray*}
		Specially, $\gamma$ approaches the origin as 
		$$
		\RRe \frac{\gamma(t)^nw^{(n)}(0)}{\av{\gamma(t)}^n}\rightarrow0.
		$$
		In fact, since every nodal curve has two directions when approaching the origin, we only need to choose one of them. By setting $\gamma(t)=r(t)e^{i\theta(t)}$, we get an integer $q$ satisfying $0\leq q\leq 2n-1$ and 
		$$
		\theta(t)=\frac{q\pi+\frac{1}{2}\pi-arg(w^{(n)}(0))}{n}+o(1),\qquad 0\leq q\leq 2n-1.
		$$
		Denoting $\eta_q:=\lim\limits_{t\rightarrow0}\theta(t)$, we have $\theta(t)=\eta_q+o(1)$. So $\eta_q$ is the angle of tangent direction of $\gamma$, and $\pi+\eta_q$ is the angle of the other tangent direction. Thus,
		
		\begin{eqnarray}\label{111}
			\RRe \left(e^{in\eta_q}w^{(n)}(0)\right)=0.
		\end{eqnarray}
		Then $w^{(n)}(0)=\pm ie^{-in\eta_q}\av{w^{(n)}(0)}$.
		
		On the other hand, expand $w'(z)$ and $w''(z)$ at the origin:
		\begin{eqnarray*}
			w'(z)&=&\frac{z^{n-1}}{(n-1)!}w^{(n)}(0)+\frac{z^{n}}{n!}w^{(n+1)}(0)+o(z^{n})\\
			w''(z)&=&\frac{z^{n-2}}{(n-2)!}w^{(n)}(0)+\frac{z^{n-1}}{(n-1)!}w^{(n+1)}(0)+o(z^{n-1}).
		\end{eqnarray*}
		Because every nodal curve is analytic \cite{WWZ}, the curvature of each nodal curve at the origin exists and is continuous. Thus, we only need to take the limit of the curvature to calculate $\kappa(u)(0)$. Combined with equation(\ref{E3.3}) we have ($z=\gamma(t)\rightarrow 0$)
		\begin{eqnarray}\label{E3.0}
			\nonumber&&\kappa(u)(0)
			=\lim\limits_{z\rightarrow0}\av{w'(z)}\RRe \left(\frac{w''(z)}{(w'(z))^2}\right)
			\nonumber=\lim\limits_{z\rightarrow0}\frac{\RRe \left(w''(z)(\overline{w'(z)})^2\right)}{\av{w'(z)}^3}
			\\
			\nonumber&&=\lim\limits_{z\rightarrow0}\left(\frac{\RRe \left(\frac{\bar{z}^n|z|^{2(n-2)}\overline{w^{(n)}(0)}\av{w^{(n)}(0)}^2}{(n-2)!(n-1)!(n-1)!}+
				\frac{2\bar{z}^{n+1}|z|^{2(n-2)}\overline{w^{(n+1)}(0)}\av{w^{(n)}(0)}^2}{(n-2)!(n-1)!n!}\right)
			}
			{\av{\frac{z^{n-1}}{(n-1)!}w^{(n)}(0)}^3}\right.
			\\
			&&+ \left.\nonumber\frac{\RRe \left(\frac{\bar{z}^{n-1}|z|^{2(n-1)}(\overline{w^{(n)}(0)})^2w^{(n+1)}(0)}{(n-1)!(n-1)!(n-1)!}\right)
			}{\av{\frac{z^{n-1}}{(n-1)!}w^{(n)}(0)}^3}\right)\\
			\nonumber&\quad&\left(\text{combine equation (\ref{E3.10}), and note} \  \RRe (\overline{z^nw^{(n)}(0)})=\mathrm{Re}\,(z^nw^{(n)}(0))\right)\\
			\nonumber&&=\lim\limits_{z\rightarrow0}\frac{\RRe \left(
				\frac{(n+2)\bar{z}^{n+1}|z|^{2(n-2)}\overline{w^{(n+1)}(0)}\av{w^{(n)}(0)}^2}{(n-2)!(n-1)!(n+1)!}+
				\frac{\bar{z}^{n-1}|z|^{2(n-1)}(\overline{w^{(n)}(0)})^2w^{(n+1)}(0)}{(n-1)!(n-1)!(n-1)!}
				\right)
			}
			{\av{\frac{z^{n-1}}{(n-1)!}w^{(n)}(0)}^3}.
		\end{eqnarray}
		The infinitesimals above are all $o(z^{3(n-1)})$, so we omit them.
		For the first term in the last equation, we get
		
		\begin{eqnarray*}
			I&=&\lim\limits_{z\rightarrow0}\frac{\RRe \left(
				\frac{(n+2)\bar{z}^{n+1}|z|^{2(n-2)}\overline{w^{(n+1)}(0)}\av{w^{(n)}(0)}^2}{(n-2)!(n-1)!(n+1)!}\right)}
			{\av{\frac{z^{n-1}}{(n-1)!}w^{(n)}(0)}^3}\\
			&=&\lim\limits_{z\rightarrow0}\frac{n^2+n-2}{n^2+n}\frac{\RRe\left(z^{n+1}w^{(n+1)}(0)\right)}{\av{z}^{n+1}\av{w^{(n)}(0)}}\quad (z=\gamma(t))\\
			&=&\frac{n^2+n-2}{n^2+n}\frac{\RRe\left(e^{i(n+1)\eta_q}w^{(n+1)}(0)\right)}{\av{w^{(n)}(0)}}.
		\end{eqnarray*}
		For the second term in the last equation, we have
		\begin{eqnarray*}
			II&=&\lim\limits_{z\rightarrow0}\frac{\RRe \left(\frac{\bar{z}^{n-1}|z|^{2(n-1)}(\overline{w^{(n)}(0)})^2w^{(n+1)}(0)}{(n-1)!(n-1)!(n-1)!}\right)}{\av{\frac{z^{n-1}}{(n-1)!}w^{(n)}(0)}^3}\\
			&=&\lim\limits_{z\rightarrow0}\frac{\RRe \left(\bar{z}^{n-1}(\overline{w^{(n)}(0)})^2w^{(n+1)}(0)\right)}{\av{z}^{n-1}\av{w^{(n)}(0)}^3}\\
			&=&\lim\limits_{t\rightarrow0}\frac{\RRe\left(\overline{\gamma(t)}^{n-1}(\overline{w^{(n)}(0)})^2w^{(n+1)}(0)\right)}{\av{\gamma(t)}^{n-1}\av{w^{(n)}(0)}^3}\\
			&=&\frac{\RRe\left(e^{-i(n-1)\eta_q}(\overline{w^{(n)}(0)})^2w^{(n+1)}(0)\right)}{\av{w^{(n)}(0)}^3}.
		\end{eqnarray*}
		From (\ref{111}), we know
		\begin{eqnarray*}
			II&=&\frac{\RRe\left(e^{i(n+1)\eta_q}w^{(n+1)}(0)(\overline{e^{in\eta_q}w^{(n)}(0)})^2\right)}{\av{w^{(n)}(0)}^3}\\
			&=& -\frac{\RRe\left(e^{i(n+1)\eta_q}w^{(n+1)}(0)\right)}{\av{w^{(n)}(0)}}.
		\end{eqnarray*}
		
		From the equations above, we know that $\kappa(u)(0)$ exists and is finite, and we can obtain
		\begin{eqnarray}\label{000}
			\nonumber\kappa(u)\big|_{\gamma}(0)&=& \left(\frac{n^2+n-2}{n^2+n}-1\right)\frac{\RRe\left(e^{i(n+1)\eta_q}w^{(n+1)}(0)\right)}{\av{w^{(n)}(0)}}\\
			&=& -\frac{2}{n^2+n}\frac{\RRe\left(e^{i(n+1)\eta_q}w^{(n+1)}(0)\right)}{\av{w^{(n)}(0)}}.
		\end{eqnarray}
		With the property that $\av{\RRe z}\leq \av{z}$, $\av{\RRe z}=\av{z}$ if and only if $\mathrm{Im}\ z=0$, the conclusion is established.
	\end{proof}
	
	\begin{remark}\label{R7}
		In fact, every nodal curve $\gamma(t)=r(t)e^{i\theta(t)}$ corresponds to a certain argument $\eta_q, 0\leq q\leq n-1$, the amount of which is just $n=n(u,0)$, implying that  the nodal curves are approximating $\{\RRe (z^nw^{(n)}(0))=0\}$ when $z\rightarrow 0$. Moreover, for different directions of tangent of $\gamma_{q}$,  $\kappa(u)\big|_{\gamma_{q+n}}(0)=-\kappa(u)\big|_{\gamma_{q}}(0)$. And the equation (\ref{000}) gives the calculation formula of $\kappa(u)(0)$.
	\end{remark}
	
	\section{Growth estimates of Fourier coefficients of boundary values}
	
	In this section, we will introduce the main ingredient and calculus in the proof of our main theorem. In the following, we may always regard any periodic function $g(\theta)$ with a period of $2\pi$ as a function $g(e^{i\theta})$ on $S^1$. 
	
	Assume that $w:B_1(0)\rightarrow \mathbb{C}$ is a holomorphic function and has expansion
	\begin{eqnarray}\label{1111}
		w(z) = \sum_{k=1}^{+\infty}a_ke^{i\theta_k}z^k,\quad a_k\in\mathbb{R}.
	\end{eqnarray}
	Therefore, the vanishing order of $u=\RRe w$ at $0$ is $n(u,0)=\min\{k\geq 1|a_k\neq 0\}$. Note that 
	\begin{eqnarray*}
		\widehat{g}(k) = \frac{1}{2\pi}\int_{0}^{2\pi} g(\theta)e^{-ik\theta} \dd\theta = \frac{1}{2\cdot k!}w^{(k)}(0)=\frac{a_k}{2}e^{i\theta_k},
	\end{eqnarray*}
	then $n(u,0)=n$ is equivalent to 
	\begin{eqnarray}\label{vanishing_order}
		\widehat{g}(k) = 0, \forall\ 0\leq k\leq n-1;\ \widehat{g}(n) \neq 0.
	\end{eqnarray}
	\begin{lemma}\label{L1}
		Assume that the boundary value of the harmonic function $u=\RRe f:B_1(0)\rightarrow \mathbb{R}$ is formally written as
		\begin{eqnarray}\label{11112}
			g(\theta) = u(e^{i\theta}) =  \sum_{k=1}^{+\infty}a_k\cos(k\theta + \theta_k),
		\end{eqnarray}
		and $g\in L^1(S^1)$ almost changes its sign for $2n$ times on $S^1$, then
		\begin{eqnarray}\label{11113}
			\av{{a_{k}}} \leq 2^{n+2}nA_nk^{2n} , \quad \forall k\geq 0, 
		\end{eqnarray}
		where $A_n:=\sup_{1\leq l\leq n}\av{a_l}$.
	\end{lemma}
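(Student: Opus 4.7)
The plan is to exploit the non-negativity $g_0 g \geq 0$ via the convolution identity for Fourier coefficients on $S^1$. First I would refine the choice of $g_0$: since the $2n$ sign-change points $\alpha_1,\ldots,\alpha_{2n}$ of $g$ are what encode the almost-sign-change condition, I would take $g_0$ to be the real trigonometric polynomial of degree exactly $n$ with simple zeros at those points, namely
\[
g_0(\theta) = \prod_{j=1}^{2n} 2\sin\frac{\theta - \alpha_j}{2}.
\]
Writing $z = e^{i\theta}$, this becomes $g_0(z) = C z^{-n}\prod_j (z - e^{i\alpha_j})$ with $|C|=1$, so by Vieta's formulas its Fourier coefficients $b_j$ satisfy $|b_{\pm n}|=1$ and $|b_j|\leq\binom{2n}{j+n}$.

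Setting $h := g_0 g \geq 0$ in $L^1(S^1)$, positivity yields $|\hat{h}(m)|\leq\hat{h}(0)$ for every $m$. Expanding $\hat h(0)=\sum_{|j|\leq n} b_j\hat g(-j)$ and using $|\hat g(l)|\leq A_n/2$ for $|l|\leq n$ gives $\hat h(0)\leq 2^{2n-1}A_n$. The convolution identity $\hat h(m) = \sum_{|j|\leq n} b_j \hat g(m-j)$, specialized to $m = k-n$ and solved for the leading term $b_{-n}\hat g(k)$, produces the recursion
\[
|\hat g(k)| \leq 2^{2n-1}A_n + \sum_{j=-n+1}^{n}\binom{2n}{j+n}|\hat g(k-n-j)|,
\]
which is a linear recurrence of order $2n$ for $|\hat g(k)|$.

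The final step is induction on $k$ to establish $|a_k|\leq 2^{n+2}nA_nk^{2n}$; the base $k\leq n$ is immediate from $|a_k|\leq A_n$. The main obstacle is that the naive triangle-inequality iteration of the recurrence above gives only exponential growth of $|\hat g(k)|$: since $\sum_j|b_j|$ can be of order $2^{2n}$, the associated characteristic polynomial has roots outside the unit circle. To recover the polynomial rate $k^{2n}$ one must use more than the crude inequality $|\hat h(m)|\leq\hat h(0)$. Plausible routes are the full positive-definiteness of the Fourier sequence $(\hat h(m))$ on $\mathbb{Z}$ (i.e.\ the PSD Toeplitz inequalities rather than the first-order one), the phase cancellations carried by the product structure of $g_0$ (the argument of $b_{-n}$ is locked to $\sum_j\alpha_j/2$, which links successive terms of the recurrence in a way that the triangle inequality discards), or a Herglotz-type reformulation in which $H(z):=g_0(z)w(z)/z^n$ is shown to be holomorphic on $B_1$ with $\RRe H \geq 0$, reducing the problem to bounding Taylor coefficients of a function with nonnegative real part. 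I expect that extracting the polynomial growth $k^{2n}$ from one of these refinements is the principal technical difficulty, while the setup and the convolution recurrence itself are routine.
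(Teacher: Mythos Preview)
Your setup is correct and your recurrence
\[
\hat h(m)=\sum_{|j|\le n} b_j\,\hat g(m-j),\qquad |\hat h(m)|\le \hat h(0)\le 2^{2n-1}A_n,
\]
is exactly the right starting point. But the proof stops precisely at the hard step, and the three suggested escape routes are too vague to count as a proof. One remark: your diagnosis that ``the associated characteristic polynomial has roots outside the unit circle'' is inaccurate. With the genuine complex coefficients $b_j$ the characteristic polynomial is $\prod_{j=1}^{2n}(z-e^{i\alpha_j})$, so \emph{all} roots lie on $|z|=1$; it is only after you apply the triangle inequality (replacing $b_j$ by $|b_j|$) that the roots escape. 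So the whole issue is how to exploit the recurrence \emph{without} destroying its unit-circle root structure. None of ``PSD Toeplitz inequalities'', ``phase cancellations'', or the Herglotz idea is made concrete; in particular the Herglotz candidate $H(z)=g_0(z)w(z)/z^n$ has a pole at $0$ unless $w$ vanishes to order $n$, which the lemma does not assume.

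The missing idea, which the paper supplies, is to \emph{factor the recurrence operator and peel off one quadratic factor at a time}. Pair the zeros $\alpha_{2j-1},\alpha_{2j}$ and write
\[
g_0(\theta)=\prod_{j=1}^{n}\bigl(\cos(\theta-\varphi_j)+c_j\bigr),\qquad c_j\in[-1,0],
\]
and set $g_m=\prod_{j>m}(\cos(\theta-\varphi_j)+c_j)$. From $g_{m}=(\cos(\theta-\varphi_{m+1})+c_{m+1})g_{m+1}$ the bound $|\widehat{g_m g}(k)|\le Ck^{2m}$ becomes a \emph{second-order} inhomogeneous recurrence for $\widehat{g_{m+1}g}(k)$:
\[
\bigl|e^{i\varphi_{m+1}}\widehat{g_{m+1}g}(k+1)+2c_{m+1}\widehat{g_{m+1}g}(k)+e^{-i\varphi_{m+1}}\widehat{g_{m+1}g}(k-1)\bigr|\le Ck^{2m}.
\]
The quadratic $e^{i\varphi}z^2+2cz+e^{-i\varphi}$ has two roots $w,v$ with $|w|=|v|=1$ because $|c|\le 1$. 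Writing the left side as $(\widehat{g_{m+1}g}(k+1)-w\,\widehat{g_{m+1}g}(k))-v(\widehat{g_{m+1}g}(k)-w\,\widehat{g_{m+1}g}(k-1))$ and telescoping twice (each telescoping step uses $|v|=1$, resp.\ $|w|=1$, and costs one extra power of $k$) yields $|\widehat{g_{m+1}g}(k)|\le C'k^{2m+2}$. Starting from $|\widehat{g_0g}(k)|\le 2^{n-1}A_n$ and iterating $n$ times gives $|\hat g(k)|=|\widehat{g_ng}(k)|\lesssim A_n k^{2n}$. This ``one factor at a time'' trick is precisely the concrete implementation of the phase cancellations you allude to, and it is what turns the bounded forcing into polynomial rather than exponential growth.
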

	\begin{proof}
		Without loss of generality, we assume $a_n = 1, \theta_n = 0$. If $g$ almost changes sign for $2n$ times, then $g$ satisfies (\ref{change_signL}). There are $2n$ zero points of $g_0$ on $S^1$, denoted by $\{\iota_j\}_{j=1}^{2n}\in S^1$. If $\cos\frac{\iota_{2j-1}-\iota_{2j}}{2}\in [0,1]$, then we assume $\varphi_j = (\iota_{2j-1}+\iota_{2j})/2$ and $c_j = -\cos\frac{\iota_{2j-1}-\iota_{2j}}{2}\in [-1,0]$, otherwise we assume $\varphi_j = (\iota_{2j-1}+\iota_{2j})/2+\pi$ and $c_j = \cos\frac{\iota_{2j-1}-\iota_{2j}}{2}\in [-1,0]$. Then we may choose
		\begin{eqnarray}
			g_0(\theta) := \prod_{j=1}^{n}{(\cos(\theta-\varphi_j)+c_j)}
		\end{eqnarray}
		to shares the same zeros, where $c_j\in [-1,0]$ for all $j$. Moreover, $g_0(\theta)g(\theta)\geq0, a.e. \forall\theta\in[0,2\pi]$, or $g_0(\theta)g(\theta)\leq 0, a.e. \forall\theta\in[0,2\pi]$. Without loss of generality, we assume $g_0(\theta)g(\theta)$ is non-negative almost everywhere (or we choose $\tilde{g} = -g$). Therefore, for all $f\in C^\infty(S^1)$ and $f\geq 0$, we can always have
		\begin{eqnarray}\label{invariant}
			\frac{1}{2\pi}\int_{0}^{2\pi} f(\theta)g_0(\theta)g(\theta) \dd\theta\geq 0.
		\end{eqnarray}
		
		For convenience, we assume
		\begin{eqnarray*}
			g(\theta) = u(e^{i\theta}) =  \sum_{k=-\infty}^{+\infty}\tilde{a}_ke^{i(k\theta + \tilde{\theta}_k)}, 
		\end{eqnarray*}
		where $\tilde{a}_k = a_{\av{k}}/2, \tilde{\theta}_k = sign(k) \theta_k$ for $k\in \mathbb{Z}$ ($\tilde{\theta}_0:=0$).
		
		Let $f(\theta) = 1 - \cos(k\theta - \varphi_0)$, where $\varphi_0$ is a fixed constant. Thus, 
		\begin{eqnarray}\label{integrate}
			\frac{1}{2\pi}\int_{0}^{2\pi} (1 - \cos(k\theta - \varphi_0))g_0(\theta)g(\theta) \dd\theta\geq 0.
		\end{eqnarray}
		
		From the arbitrariness of $\varphi_0$, we have
		\begin{eqnarray}\label{212}
			\frac{1}{2\pi}\int_{0}^{2\pi} g_0(\theta)g(\theta) \dd\theta&\geq &\sup_{\varphi_0\in \R}\RRe \left(\frac{1}{2\pi}\int_{0}^{2\pi} e^{i(\varphi_0-k\theta)}g_0(\theta)g(\theta) \dd\theta\right)
			\nonumber\\
			&=&\av{\frac{1}{2\pi}\int_{0}^{2\pi} e^{-ik\theta}g_0(\theta)g(\theta) \dd\theta}.
		\end{eqnarray}
		The left hand side of (\ref{212}) can be calculated by
		\begin{eqnarray}\label{213}
			&&\frac{1}{2\pi}\int_{0}^{2\pi} \prod_{j=1}^{n}{(\cos(\theta-\varphi_j)+c_j)}g(\theta) \dd\theta \\
			&&= \frac{1}{2\pi}\int_{0}^{2\pi} \prod_{j=1}^{n}{(\frac{e^{i(\theta-\varphi_j)}+e^{-i(\theta-\varphi_j)}}{2}+c_j)}g(\theta) \dd\theta \nonumber\\
			&&= \frac{1}{2^{n+1}\pi}\sum_{I_1\uplus I_2\uplus I_3=\{1,\ldots,n\}}\int_{0}^{2\pi}\prod_{j_1\in I_1}(2c_j)\prod_{j_2\in I_2}e^{i(\theta-\varphi_{j_2})}\prod_{j_3\in I_3}e^{-i(\theta-\varphi_{j_3})}g(\theta) \dd\theta \nonumber\\
			&&= \frac{1}{2^{n}}\sum_{I_1\uplus I_2\uplus I_3=\{1,\ldots,n\}}\prod_{j_1\in I_1}(2c_j)\prod_{j_2\in I_2, j_3\in I_3}e^{i(\varphi_{j_3}-\varphi_{j_2})}\tilde{a}_{\av{I_3}-\av{I_2}}e^{i\tilde{\theta}_{\av{I_3}-\av{I_2}}} \nonumber\\
			&&\leq \frac{1}{2^{n}}\sum_{j=0}^n2^jC_n^j\sum_{\tilde{j}=0}^{n-j}C_{n-j}^{\tilde{j}}\frac{A_n}{2} = 2^{n-1}A_n. \nonumber
		\end{eqnarray}
		The penultimate inequality follows from that $c_j\in [-1,1]$. 
		
		In the following, for $0\leq m \leq n-1$, we denote $g_m:=\prod_{j=m+1}^{n}{(\cos(\theta-\varphi_j)+c_j)}$ and $\widehat{g_mg}(k) = \frac{1}{2\pi}\int_{0}^{2\pi} e^{-ik\theta}g_m(\theta)g(\theta) \dd\theta$; for $m=n$, $g_m:=1$.
		
		By the same calculation with (\ref{213}), we have 
		\begin{eqnarray}\label{2144}
			\av{\widehat{g_mg}(0)}\leq 2^{n-m-1}A_{n-m}, \av{\widehat{g_mg}(1)}\leq 2^{n-m-1}A_{n-m+1}.
		\end{eqnarray}
		Denoting $A_0:=0$, we know that the above inequality is right for all $0\leq m \leq n$.
		
		Observing that $g_0(\theta) = (\frac{e^{i(\theta-\varphi_1)}+e^{-i(\theta-\varphi_1)}}{2}+c_1)g_1(\theta)$, we can rewrite (\ref{212}) as
		\begin{eqnarray}\label{214}
			\av{e^{i\varphi_1}\widehat{g_1g}(k+1)+2c_1\widehat{g_1g}(k)+e^{-i\varphi_1}\widehat{g_1g}(k-1)}\leq 2^nA_n.
		\end{eqnarray}
		Let $w_m:=(-c_m+\sqrt{1-c_m^2}i)e^{-i\varphi_m}, v_m:=(-c_m-\sqrt{1-c_m^2}i)e^{-i\varphi_m}$ be the roots of
		\begin{eqnarray*}
			{e^{i\varphi_m}z^2+2c_mz+e^{-i\varphi_m}}=0.
		\end{eqnarray*}
		Then (\ref{214}) gives
		\begin{eqnarray*}
			\av{(\widehat{g_1g}(k+1)-w_1\widehat{g_1g}(k))-v_1(\widehat{g_1g}(k)-w_1\widehat{g_1g}(k-1))}\leq 2^nA_n.
		\end{eqnarray*}
		Summing it from $0$ to $k$, and noting $\av{w_1}=\av{v_1}=1$, we derive
		\begin{eqnarray*}
			\av{(\widehat{g_1g}(k+1)-w_1\widehat{g_1g}(k))-v_1^k(\widehat{g_1g}(1)-w_1\widehat{g_1g}(0))}\leq 2^nA_nk.
		\end{eqnarray*} 
		Therefore, with (\ref{2144}),
		\begin{eqnarray}\label{216}
			\av{\widehat{g_1g}(k+1)-w_1\widehat{g_1g}(k)}
			&\leq& 2^nA_nk+\av{\widehat{g_1g}(1)}+\av{\widehat{g_1g}(0)}\\
			&\leq& 2^nA_n(k+1).\nonumber
		\end{eqnarray} 
		Summing it from $0$ to $k-1$, we have
		\begin{eqnarray*}
			\av{\widehat{g_1g}(k)-w_1^k\widehat{g_1g}(0)}
			\leq 2^nA_n\frac{k(k+1)}{2},
		\end{eqnarray*} 
		thus,
		\begin{eqnarray*}
			\av{\widehat{g_1g}(k)}
			\leq 2^nA_n\frac{k(k+1)}{2}+\av{\widehat{g_1g}(0)}\leq 2^{n+1}k^2A_n.
		\end{eqnarray*} 
		By induction, we assume the following inequality holds for some $1\leq m\leq n-1$:
		\begin{eqnarray}\label{217}
			\av{\widehat{g_mg}(k)}
			\leq 2^{n+1}mk^{2m}A_n.
		\end{eqnarray} 
		Observe that $g_m(\theta) = (\frac{e^{i(\theta-\varphi_{m+1})}+e^{-i(\theta-\varphi_{m+1})}}{2}+c_{m+1})g_{m+1}(\theta)$, we can rewrite (\ref{217}) as
		\begin{eqnarray*}
			\av{e^{i\varphi_{m+1}}\widehat{g_{m+1}g}(k+1)+2c_{m+1}\widehat{g_{m+1}g}(k)+e^{-i\varphi_{m+1}}\widehat{g_{m+1}g}(k-1)}\leq 2^{n+1}mk^{2m}A_n.
		\end{eqnarray*}
		Just repeating the above steps, we obtain
		\begin{eqnarray*}
			\av{\widehat{g_{m+1}g}(k+1)-w_{m+1}\widehat{g_{m+1}g}(k)}
			&\leq& \sum_{l=1}^k2^{n+1}ml^{2m}A_n+\av{\widehat{g_{m+1}g}(1)}+\av{\widehat{g_{m+1}g}(0)}\\
			&\leq& \sum_{l=1}^k2^{n+1}ml^{2m}A_n+2^{n-m}A_{n}.
		\end{eqnarray*}
		Summing it from $0$ to $k-1$, we have
		\begin{eqnarray*}
			\av{\widehat{g_{m+1}g}(k)-w_{m+1}^k\widehat{g_{m+1}g}(0)}
			&\leq& \sum_{l=0}^{k-1}(\sum_{\tilde{l}=1}^l2^{n+1}m\tilde{l}^{2m}A_n+2^{n-m}A_{n})\\
			&=& \sum_{l=1}^{k-1}2^{n+1}ml^{2m}(k-l)A_n+2^{n-m}kA_{n} \\
			&\leq& 2^{n+1}(m+1)k^{2(m+1)}A_{n} - 2^{n-m}A_{n},
		\end{eqnarray*} 
		thus, 
		\begin{eqnarray*}
			\av{\widehat{g_{m+1}g}(k)}\leq 2^{n+1}(m+1)k^{2(m+1)}A_{n}.
		\end{eqnarray*} 
		In conclusion, (\ref{217}) holds for all $1\leq m\leq n$. Specially, $g_{n}g=g$ and
		\begin{eqnarray*}
			\av{{a_{k}}} = \av{\frac{1}{k!}w^{(k)}(0)}= 2\av{\widehat{g_{n}g}(k)} \leq 2^{n+2}nA_nk^{2n} , \quad \forall k\geq 1, 
		\end{eqnarray*}
		And $a_0=0$ comes from the $0$th Fourier coefficient of $g$.
	\end{proof}

	\begin{remark}
		In fact, Lemma \ref{L1} tells us that Fourier coefficients of boundary values have polynomial growth, and the growth order is at most $2n$. Moreover, the equality case of inequality (\ref{e1}) or (\ref{e2}) holds when $T=\sum_{j=1}^{2n}b_jD^j\delta_{\varphi_0}$ in the proof of Theorem \ref{extremer_case}, and $\av{a_k}=\av{\frac{1}{\pi}T\left(\zeta^{-k}\right)} = \av{\sum_{j=1}^{2n}b_j(ik)^je^{-ik\varphi_0}}$, meaning that the growth order of Fourier coefficients of $T$ is $2n$. Therefore, the growth order in Lemma \ref{L1} is optimal to some extend.
	\end{remark}
	When the boundary value $g(\theta)=u|_{\partial\Omega}$ of $u$ is continuous, we can observe that $g(\theta)$ will change signs for $2n$ times if $u$ has $n$ nodal curves in $B_1(0)$. Then $g$ will satisfy (\ref{change_signL}). 
	For $g(\theta)\in L^1(S^1)$, according to the result in \cite{ramm2022dirichlet}, we know that $u(r,\theta)=P(r,\cdot)\ast g(\cdot) (\theta)\in C^\infty(B_1(0))$ 
	is the only harmonic function with boundary value $g$, where the Poisson kernel $P(r,\theta) = \frac{1}{2\pi}\frac{1-r^2}{1-2r\cos(\theta)+r^2}$ is an approximate identity on $S^1$ \cite{grafakos2008classical}. Thus, $u(r,\theta)$ converges to $g(\theta)$ in the sense of $L^1$, that is, $\aV{u(r,\theta)-g(\theta)}_{L^1(S^1)}\rightarrow 0$ as $r\rightarrow 1-$. But the condition that $g$ changes sign is no longer meaningful. So we need the definition of almost changing sign, meaning that we can find intervals where $g\geq 0$ (or $g\leq 0$) in the sense of $L^1$. 
	
	In fact, for $g\in L^1(S^1)$, we can also prove that $g$ satisfies (\ref{change_signL}) if $u$ has $n$ nodal curves in $B_1(0)$. The first statement of the following lemma is the extension of results in \cite{WWZ}.
	\begin{lemma}\label{l9}
		Assume $u$ is harmonic in $B_1(0)$ and $g(\theta) = u|_{S^1} \in L^1(S^1)$, and $u$ has only $n$ nodal curves $\{\gamma_k\}_{k=1}^n$ in $B_1(0)$ across the origin, i.e. $\gamma_k(0) = 0$ for all $k$. Then
		
		(i) Every maximal curve $\{\gamma_k(t): -\infty<s_k<t<l_k<+\infty\}\subset B_1(0)$ has a unique extension to the boundary, i.e., $\gamma_k(s_k):=\gamma_k(s_k+)$ and $\gamma_k(l_k):=\gamma_k(l_k-)$ exist and belong to $\partial B_1(0)$;
		
		(ii) Let the set of end points of all maximal curves be $\{\eta_k\}_{k=1}^{2n}$, where $0\leq \eta_1\leq \cdots\leq \eta_{2n}\leq \eta_1+2\pi:=\eta_{2n+1}$. Denote $I_k = [\eta_k, \eta_{k+1}]$, then by multiplying a constant, for all $1\leq k\leq 2n$,
		$$
		(-1)^k g(\theta)\geq 0, \quad a.e.\ \theta\in I_k.
		$$
	\end{lemma}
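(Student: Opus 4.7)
The plan is to establish (i) first, and then derive (ii) from the resulting geometric picture together with the Poisson representation.

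For part (i), I would work inside $B_1(0)$, where $u$ is real-analytic and $Z(u)$ is a locally finite union of analytic arcs meeting only at critical points. Interpreting the hypothesis as asserting that the $n$ arcs through the origin encounter no interior critical points besides $0$, each $\gamma_k$ restricted to one side of the origin is a simple analytic arc. I would first rule out closed nodal loops inside $B_1(0)$: such a loop would bound a subregion on which $u$ is harmonic and vanishes on the boundary, forcing $u\equiv 0$ by the maximum principle and contradicting nonconstancy. Hence each end of $\gamma_k$ must accumulate on $\partial B_1(0)$, and the arc-length interval $(s_k,l_k)$ is bounded. To pin down a unique limit point, I would observe that the limit set at each end is closed and connected in $\partial B_1(0)$; if it were a non-degenerate subarc, then the simple embedded analytic curve $\gamma_k$ would have to oscillate between two distinct boundary points, which is incompatible with its embedded analytic character and would force additional nodal curves in $B_1(0)$, contradicting the assumption that only $n$ pass through $0$.

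Given (i), the $n$ nodal curves together with their boundary limits form a planar graph with one interior vertex of degree $2n$ at the origin and $2n$ boundary vertices of degree $1$. Euler's formula (or a direct topological argument) gives exactly $2n$ connected components of $B_1(0)\setminus Z(u)$, each bounded by two half-curves meeting at the origin and exactly one boundary arc $I_k$. On each such component $u$ has constant sign, alternating across each nodal curve. In particular, after multiplying $u$ by a suitable global constant, the component adjacent to $I_k$ has sign $(-1)^k$, so $(-1)^k u(r,\theta)\geq 0$ for $(r,\theta)$ in a one-sided interior neighborhood of $I_k$.

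To transfer the sign to the boundary datum $g\in L^1(S^1)$, I would use the Poisson representation $u(r,\cdot)=P_r\ast g$, which converges to $g$ in $L^1(S^1)$ as $r\to 1^-$. For any nonnegative test function $\phi\in C_c^\infty(I_k)$, the integrals
\begin{eqnarray*}
\int_{S^1} u(r,\theta)\phi(\theta)\dd\theta \longrightarrow \int_{S^1} g(\theta)\phi(\theta)\dd\theta \qquad (r\to 1^-),
\end{eqnarray*}
and for $r$ sufficiently close to $1$ the integrand $(-1)^k u(r,\theta)\phi(\theta)$ is nonnegative, because the support of $\phi$ at radius $r$ lies in the one-sided neighborhood where $(-1)^k u\geq 0$. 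Passing to the limit yields $(-1)^k\int g\phi\dd\theta\geq 0$ for every such $\phi$, hence $(-1)^k g\geq 0$ a.e. on $I_k$.

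The main obstacle I anticipate is the uniqueness part of (i), i.e., excluding the possibility that an end of $\gamma_k$ accumulates on a nondegenerate subarc of $\partial B_1(0)$. This is where the hypothesis that exactly $n$ nodal curves pass through $0$ has to be combined carefully with the real-analytic structure of $\gamma_k$ and the sign-separation property of nodal sets; the rest of the argument is essentially a topological bookkeeping followed by a routine $L^1$-convergence application of the Poisson kernel.
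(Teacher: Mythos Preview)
Your outline for (ii) is essentially correct and matches the paper's approach (the paper uses a.e.\ radial convergence of $u(r,\theta)\to g(\theta)$ rather than pairing with test functions, but either works once (i) is in place).

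The genuine gap is in (i), precisely at the point you flag as the ``main obstacle.'' Your proposed mechanism---that oscillation of an end along a nondegenerate subarc of $S^1$ is ``incompatible with the embedded analytic character'' of $\gamma_k$ or ``would force additional nodal curves''---does not hold up. Real-analyticity of $\gamma_k$ is an interior statement; nothing prevents an embedded analytic arc in $B_1(0)$ from oscillating infinitely often as it approaches $\partial B_1(0)$, and such oscillation of a \emph{single} nodal curve does not, by itself, generate further nodal curves. So this step needs a different idea.

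The paper's argument supplies it. Suppose the end of $\gamma_k$ has two distinct limit points on $S^1$; then by connectedness its limit set contains a nondegenerate arc $I_0$, and for every $\theta\in I_0$ there is a sequence $r_j\to 1^-$ with $u(r_j,\theta)=0$. Combined with the a.e.\ radial convergence $u(r,\theta)\to g(\theta)$, this forces $g=0$ a.e.\ on $I_0$. Now the Poisson integral representation shows $u$ extends continuously by $0$ to the middle half of $I_0$, so Schwarz reflection produces a harmonic extension $\tilde u$ across that subarc. Along each radial segment $L_\theta$ through this subarc, $\tilde u$ is real-analytic and its zeros accumulate at the boundary point $e^{i\theta}$ (because $\gamma_k$ keeps crossing $L_\theta$); hence $\tilde u\equiv 0$ on $L_\theta$, then on an open set, then everywhere---contradicting nonconstancy. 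This reflection-and-analytic-continuation step is the missing ingredient in your proposal.
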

	\begin{proof}
		(i) By the results of \cite{WWZ}, every nodal curve is a maximal curve in $B_1(0)$, and both ends will extend to the boundary. That is, for any compact set $K\subset B_1(0)$, there exists $\tilde{s}_{k,K}, \tilde{l}_{k,K}$, satisfying that $\gamma_k(s_k,\tilde{s}_{k,K})\cup\gamma_k(\tilde{l}_{k,K},l_k)\subset B_1(0)\backslash K$. Now we need to prove $\gamma_k(s_k+)$ and $\gamma_k(l_k-)$ exist. 
		
		Assume $\gamma_k(s_k+)$ does not exist for some $k$. Then we have two distinct limit points $e^{i\theta_1}$ and $e^{i\theta_2}$ of $\gamma_k(t)$ when $t\rightarrow s_k+$, where $0\leq\theta_1<\theta_2<2\pi$. Using the fact that $\gamma_k$ is an analytic curve and $\gamma_k(t)$ tends to the boundary, we derive that $\gamma_k(t)$ will oscillate infinitely many times on $(\theta_1, \theta_2)$ (or $(\theta_2, \theta_1+2\pi)$) as $t\rightarrow s_k+$. The interval is denoted by $I_0$. Therefore, for all $\theta\in I_0$, $e^{i\theta}$ is a limit point of $\gamma_k$. 
		With the $L^1$ convergence, it follows that when $r\rightarrow 1-$, $u(r,\theta)\rightarrow g(\theta), a.e.\theta\in I_0$. As a consequence of $u|_{\gamma_k}=0$, 
		$$g(\theta)=0, a.e.\theta\in I_0.$$
		We may assume $g(\theta)=0, \forall\theta\in I_0$, which gives no difference for $u(r,\theta)$ from the uniqueness of $u$ \cite{ramm2022dirichlet}. 
		Recall that
		\begin{eqnarray*}
			u(r,\theta) = \int^{2\pi}_0 \frac{1}{2\pi}\frac{1-r^2}{1-2r\cos(\theta-\alpha)+r^2}g(\alpha) \dd\alpha.
		\end{eqnarray*}
		Let $\frac{1}{2}I_0$ denote the concentric interval of $I_0$. Without loss of generality, we assume $I_0 = (\theta_1, \theta_2)$. Then $\forall \theta\in \frac{1}{2}\bar{I}_0, \forall \alpha\in [0,2\pi]\backslash 
		\bar{I}_0$, we have $\av{\theta-\alpha}\geq (\theta_2-\theta_1)/2$, which results in
		\begin{eqnarray*}
			\av{u(r,\theta)} &\leq& \int_{[0,2\pi]\backslash \bar{I}_0} \frac{1}{2\pi}\frac{1-r^2}{1-2r\cos(\theta-\alpha)+r^2}\av{g(\alpha)} \dd\alpha \\
			&\leq& \frac{1}{2\pi}\frac{1-r^2}{1-2r\cos(\frac{\theta_2-\theta_1}{2})+r^2}\int_{[0,2\pi]\backslash \bar{I}_0} \av{g(\alpha)} \dd\alpha.
		\end{eqnarray*}
		So $u(r,\theta)$ tends to $0$ as $r\rightarrow 1-$, implying that $u\in C(B_1(0)\cup S_0)$, where $S_0 = \{(\cos\theta, \sin\theta)|\theta\in \frac{1}{2}I_0\}$. By symmetric principle, we extend $u$ to a harmonic function $\tilde{u}: D\rightarrow\mathbb{R}$, where $\exists \varepsilon<1$,
		\begin{eqnarray*}
			B_1(0)\cup\{(r\cos\theta, r\sin\theta)|\theta\in \frac{1}{2}I_0, r<1+\varepsilon\}\subset D.
		\end{eqnarray*}
		Therefore, for all $\theta\in \frac{1}{2}I_0$, $\tilde{u}$ is analytic along half-line $L_\theta:=\{(r\cos\theta, r\sin\theta)|0<r<1+\varepsilon\}$. But zero points of $\tilde{u}$ on $L_\theta$ have a accumulation point $(\cos\theta, \sin\theta)$, which means $\tilde{u}=0$ on $L_\theta$, for all $\theta\in \frac{1}{2}I_0$. Then $\tilde{u}=0$ in an open set, indicating $u\equiv 0$ in $B_1(0)$, leading to a contradiction. 
		
		Similarly, $\gamma_k(l_k-)$ also exists.
		
		(ii) When $Int(I_k) = \emptyset$, the conclusion is right. Assume $Int(I_k) \neq \emptyset$. As there are only $n$ nodal curves in $B_1(0)$ across the origin, we know that there are $2n$ nodal domains $\{\Omega_k\}^{2n}_{k=1}$ in $B_1(0)$. Without loss of generality, we may assume $\Omega_k\subset \{u>0\}$ and $\overline{\Omega}_k\cap S^1 = \{(\cos\theta, \sin\theta)|\theta\in I_k\}$. Assume $\partial\Omega_k\cap B_1(0) = \gamma_{k_1}(0,l_{k_1})\cup\gamma_{k_2}(0,l_{k_2})\cup\{0\}$ for some $k_1, k_2$. Therefore, by (i), for fixed $\theta\in Int(I_k)$, there exists some $\widetilde{l}_{k_1}>0, \widetilde{l}_{k_2}>0$, such that  $\gamma_{k_1}(\widetilde{l}_{k_1},{l}_{k_1})\subset B_1(0)\cap B_d(\gamma_{k_1}({l}_{k_1}))$ and $ \gamma_{k_2}(\widetilde{l}_{k_2},{l}_{k_2})\subset B_1(0)\cap B_d(\gamma_{k_2}({l}_{k_2}))$, where $d:=\max\{\av{\gamma_{k_1}({l}_{k_1})-e^{i\theta}}, \av{\gamma_{k_2}({l}_{k_2})-e^{i\theta}}\}/2$ . By the compactness of $\gamma_{k_1}[0,\widetilde{l}_{k_1}]$ and $\gamma_{k_2}[0,\widetilde{l}_{k_2}]$, there exists some $r_j:=\sup_{t\in[0,\widetilde{l}_{k_j}]}\av{\gamma_{k_j}(t)}<1, j=1,2$. Letting $r_0:= max\{r_1,r_2\}$, we have $\forall r_0<r<1$, $u(r,\theta)>0$. Then, by the almost everywhere convergence property and arbitrariness of $\theta\in Int(I_k)$, we get that $g(\theta)\geq 0, a.e. \theta\in I_k$.
	\end{proof}
	
	\section{curvature estimates at $0$ with $n(u,0)\geq1$}
	Without loss of generality, we take $r_0=1$ in Theorem \ref{Main_theorem} (otherwise, replace $u(z)$ by $v(z)=u(r_0z)$), where $u$ has only $n\geq 1$ nodal curves intersecting at the origin in $B_1(0)$.~\\
	
	\emph{Proof of Theorem \ref{Main_theorem}}. 
	First, set $u=\RRe w$, and select a conjugate function of $u$ such that $w(0)=0$. Choose any curve $\gamma$ in the nodal set, we just need to prove that the curvature of $\gamma$ at origin is bounded. 
	
	In light of Remark \ref{R7}, since $u$ has $n$ nodal curves across the origin, we know that $n(u,0)=n$ . After a rotation, $w$ has the following expansion:
	$$
	w(z)=a_nz^n+a_{n+1}e^{i\theta_{n+1}}z^{n+1}+\cdots,
	$$
	where $a_n>0$, $w^{(n)}(0)=n!a_n$ and $a_m\in \mathbb{R}$ for all $m\geq n$.
	Thus, formally, 
	$$
	g(\theta) = u(e^{i\theta}) =  \sum_{k=n}^{+\infty}a_k\cos(k\theta + \theta_k),
	$$
	where $\theta_n=0$. Moreover, $\forall k\geq n, \widehat{g}(k) = a_ke^{i\theta_k}/2 = w^{(k)}(0)/(2\cdot k!)$.
	
	By Lemma \ref{L3.12}, we only need to estimate the upper bound of $a_{n+1}/a_n$, that is,
	\begin{eqnarray}
		k(u)\big|_{\gamma_q}(0)&=&-\frac{2}{n^2+n}\frac{\RRe\left(e^{i(n+1)\eta_q}w^{(n+1)}(0)\right)}{\av{w^{(n)}(0)}} \nonumber\\
		&=& -\frac{2}{n}\frac{\RRe\left(e^{i(n+1)\eta_q}\widehat{g}(n+1)\right)}{\widehat{g}(n)}\label{31000} \\
		&=& -\frac{2}{n}\frac{a_{n+1}}{a_n}\cos(\theta_{n+1}+(n+1)\eta_q) \label{31111}.
	\end{eqnarray}
	
	By (ii) of Lemma \ref{l9}, for all $g\in L^1(S^1)$, we know that $g$ almost changes its sign for $2n$ times on $S^1$. Therefore, by Lemma \ref{L1}, we know that $a_{n+1}/a_n$ has an upper bound. Now we aim to figure out a sharp upper bound.
	
	Just as the proof of Lemma \ref{L1}, we have inequality (\ref{invariant}). Take $f(\theta) = 1 - cos(\theta-\varphi_0)$, and note that $\widehat{g}(k)=0, \forall 0\leq k\leq n-1$. Thus, we have the inequality (\ref{212}) for $k=1$. From the same calculation with (\ref{213}), we obtain
	\begin{eqnarray*}
		\frac{1}{2\pi}\int_{0}^{2\pi} g_0(\theta)g(\theta) \dd\theta
		&=&\frac{1}{2\pi}\int_{0}^{2\pi} \prod_{j=1}^{n}{(\cos(\theta-\varphi_j)+c_j)}g(\theta) \dd\theta \\
		&=& \frac{1}{2\pi}\int_{0}^{2\pi} \prod_{j=1}^{n}{(\frac{e^{i(\theta-\varphi_j)}+e^{-i(\theta-\varphi_j)}}{2}+c_j)}g(\theta) \dd\theta \nonumber\\
		&=& \frac{1}{2^{n}}(\widehat{g}(n)e^{i\sum_{j=1}^n\varphi_j}+\widehat{g}(-n)e^{-i\sum_{j=1}^n\varphi_j}) \\
		&=& \frac{a_n\cos(\bar{\varphi})}{2^{n}},
	\end{eqnarray*}
	where $\bar{\varphi}:=\sum_{j=1}^n\varphi_j$.
	
	On the other hand, for some $\varphi_0$, we have 
	\begin{eqnarray}
		\frac{1}{2\pi}\int_{0}^{2\pi} \cos(\theta-\varphi_0)g_0(\theta)g(\theta) \dd\theta
		&=&\RRe \left(\frac{1}{2\pi}\int_{0}^{2\pi} e^{i\varphi_0-i\theta}g_0(\theta)g(\theta) \dd\theta\right)\nonumber\\
		&=&\av{\frac{1}{2\pi}\int_{0}^{2\pi} e^{-i\theta}g_0(\theta)g(\theta) \dd\theta}\nonumber\\
		&=& \av{\frac{1}{2\pi}\int_{0}^{2\pi} \prod_{j=1}^{n}{(\frac{e^{i(\theta-\varphi_j)}+e^{-i(\theta-\varphi_j)}}{2}+c_j)}g(\theta)e^{-i\theta} \dd\theta}\nonumber \\
		&=& \av{\frac{1}{2^{n}}\left(\widehat{g}(n+1)e^{i\bar{\varphi}}
			+2\sum_{j=1}^nc_j\widehat{g}(n)e^{i(\bar{\varphi}-\varphi_j)}\right)}\nonumber \\
		&=& \av{\frac{a_{n+1}e^{i(\theta_{n+1}+\bar{\varphi})}+2\sum_{j=1}^nc_ja_ne^{i(\bar{\varphi}-\varphi_j)}}{2^{n+1}}}\nonumber.
	\end{eqnarray}
	Thus, it follows from (\ref{212}) that
	\begin{eqnarray}
		\frac{1}{2\pi}\int_{0}^{2\pi} e^{i\varphi_0-i\theta}g_0(\theta)g(\theta) \dd\theta
		&=&
		e^{i\varphi_0}\left(\frac{a_{n+1}e^{i(\theta_{n+1}+\bar{\varphi})}+2\sum_{j=1}^nc_ja_ne^{i(\bar{\varphi}-\varphi_j)}}{2^{n+1}}\right)\nonumber\\
		&\leq& \frac{a_n\cos(\bar{\varphi})}{2^{n}}\label{30},
	\end{eqnarray}
	which gives
	\begin{eqnarray}\label{31}
		&&\av{a_{n+1}\cos(\theta_{n+1}+(n+1)\eta_q)}\leq
		\av{a_{n+1}} \\
		&&\leq 2\left(a_n\cos(\bar{\varphi})+\av{\sum_{j=1}^nc_ja_ne^{i(\bar{\varphi}-\varphi_j)}}\right)\leq 2(n+1)\av{a_n}, \nonumber
	\end{eqnarray}
	All equalities hold only if $c_j=-1$ and 
	\begin{eqnarray}
		&&\theta_{n+1}+\bar{\varphi} \equiv \bar{\varphi}-\varphi_j \ (mod\ 2\pi),\ \forall 1\leq j \leq n, \label{32}\\
		&&\theta_{n+1}+(n+1)\eta_q\equiv 0\ (mod\ \pi),\quad \bar{\varphi}
		\equiv 0\ (mod\ 2\pi)\label{33},
	\end{eqnarray}
	where $\bar{\varphi}
	\equiv 0\ (mod\ 2\pi)$ means that there is some $k\in \mathbb{Z}$ such that $\bar{\varphi}=2k\pi$. 
	In light of equation (\ref{30}) and (\ref{32}), we obtain that 
	\begin{eqnarray}\label{onlyif1}
		\varphi_0+\theta_{n+1}+\bar{\varphi} \equiv\varphi_0+ \bar{\varphi}-\varphi_j\ \equiv 0\ (mod\ 2\pi),\ \forall 1\leq j \leq n,
	\end{eqnarray}
	
	For odd $n$ and $q=(n-1)/2$, there exists $\varphi_j$ such that the equalities hold, which will be specified in Lemma \ref{sharpness}. Combining (\ref{31111}) and (\ref{31}), we derive that (\ref{e1}) holds for odd n. 
	
	For even $n$, (\ref{32}) and (\ref{33}) can not hold simultaneously. Otherwise, we may assume $\theta_{n+1}=-\varphi_j$ for all $1\leq j \leq n$ in (\ref{32}). Recall $\overline{\varphi}=\sum_{j=1}^{n}\varphi_j$ and $\eta_q=\frac{q\pi+\frac{1}{2}\pi}{n}$. Thus, by (\ref{33}), we have 
	\begin{eqnarray*}
		-\overline{\varphi}+n(n+1)\eta_q=n\theta_{n+1}+n(n+1)\eta_q\equiv 0\ (mod\ \pi).
	\end{eqnarray*}
	However, we know $n(n+1)\eta_q=(n+1)(q+\frac{1}{2})\pi\equiv \pi/2\ (mod\ \pi)$ and $\overline{\varphi}\equiv 0\ (mod\ 2\pi)$, which leads to a contradiction.
	
	This time, it follows from (\ref{30}) that
	\begin{eqnarray}
		&&\av{{a_{n+1}\cos(\theta_{n+1}+(n+1)\eta_q)+2\sum_{j=1}^nc_ja_n\cos(-\varphi_{j}+(n+1)\eta_q)}}\nonumber\\
		&&\leq
		\av{{a_{n+1}e^{i(\theta_{n+1}+(n+1)\eta_q)}+2\sum_{j=1}^nc_ja_ne^{i(-\varphi_j+(n+1)\eta_q)}}}
		\leq 2{a_n\cos(\bar{\varphi})}.\label{onlyifeven}
	\end{eqnarray}
	
	Let $\iota_j\in \mathbb{Z}, j=0,\ldots, n$, such that $-\varphi_{j}+(n+1)\eta_q+\iota_j\pi\in[-\pi/2,\pi/2]$, where $j=1,\ldots,n$, and $\bar{\varphi}+\iota_0\pi\in[-\pi/2,\pi/2]$. Thus, according to the concavity of $\cos\theta$ on $[-\pi/2,\pi/2]$, we have
	\begin{eqnarray}
		&&\av{{a_{n+1}\cos(\theta_{n+1}+(n+1)\eta_q)}}\label{341}\\
		&&\leq
		\av{{a_{n+1}\cos(\theta_{n+1}+(n+1)\eta_q)+2\sum_{j=1}^nc_ja_n\cos(-\varphi_{j}+(n+1)\eta_q)}} \nonumber\\
		&&\qquad+ \av{2\sum_{j=1}^nc_ja_n\cos(-\varphi_{j}+(n+1)\eta_q)}\nonumber\\
		&&\leq 2\av{a_n}\left(\cos(\bar{\varphi}+\iota_0\pi)+\sum_{j=1}^n\cos(-\varphi_{j}+(n+1)\eta_q+\iota_j\pi)\right)\nonumber\\
		&&\leq 2(n+1)\av{a_n}\cos\left(\frac{\bar{\varphi}+\iota_0\pi+\sum_{j=1}^n(-\varphi_{j}+(n+1)\eta_q+\iota_j\pi)}{n+1}\right)\nonumber\\
		&& = 2(n+1)\av{a_n}\cos\left(\frac{\sum_{j=0}^n\iota_j}{n+1}\pi+n\eta_q\right)\nonumber\\
		&& = 2(n+1)\av{a_n}\cos\left(\frac{\sum_{j=0}^n\iota_j}{n+1}\pi+q\pi+\frac{1}{2}\pi\right)\nonumber\\
		&& \leq 2(n+1)\av{a_n}\cos\left(\frac{\pi}{2(n+1)}\right)\nonumber.
	\end{eqnarray}
	The last inequality follows from the observation that, for all $k\in \mathbb{Z}$,
	\begin{eqnarray*}
		\av{\frac{k}{n+1}+\frac{1}{2}}\geq \frac{1}{2(n+1)}.
	\end{eqnarray*}
	Thus, for even $n$, (\ref{e2}) holds. 
	
	The proof of sharpness follows from lemma \ref{sharpness}.
	\hfill$\square$\par

	\begin{lemma}\label{sharpness}
		For odd $n$, (\ref{e1}) is sharp. For even $n$, (\ref{e2}) is sharp.
	\end{lemma}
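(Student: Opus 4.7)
The plan is to exhibit explicit extremizers drawn from the formula \eqref{extremers}. Set $\varphi_0 = 0$ when $n$ is odd (taking $k=0$, $l=0$) and $\varphi_0 = \pi/(2n(n+1))$ when $n$ is even (taking $k=0$, $l=1$), and let $w(z)$ be the corresponding function from \eqref{extremers}. Since the only singularity of $w$ lies at $e^{i\varphi_0}\in S^1$, the function $u=\RRe w$ is harmonic in $B_1(0)$; and since the Taylor expansion of $w$ at $0$ begins with $z^n$, we have $u(0)=0$ with vanishing order $n(u,0)=n$, so by the analyticity of nodal curves (see \cite{WWZ} and Remark \ref{R7}) there are exactly $n$ nodal curves of $u$ through the origin in $B_1(0)$. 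Thus the hypothesis of Theorem \ref{Main_theorem} is satisfied with $r_0=1$, and sharpness reduces to a direct computation of $\kappa(u)(0)$ via the curvature formula \eqref{000}.

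The first key step is to expand $(1-e^{-i\varphi_0}z)^{-m}=\sum_{k\ge 0}\binom{m-1+k}{k}e^{-ik\varphi_0}z^k$ and read off the two relevant Taylor coefficients of \eqref{extremers}, yielding
\begin{equation*}
w^{(n)}(0)=n!,\qquad
\frac{w^{(n+1)}(0)}{(n+1)!}=2n\,e^{-i\varphi_0}+2\cos(n\varphi_0)\,e^{-i(n+1)\varphi_0}.
\end{equation*}
The next step is to insert these into \eqref{000}, using the admissible angles $\eta_q=(2q+1)\pi/(2n)$ produced by $w^{(n)}(0)\in\R_{>0}$. For odd $n$, the choice $\varphi_0=0$ makes $w^{(n+1)}(0)/(n+1)!=2(n+1)$ real; choosing $q=(n-1)/2$ gives $(n+1)\eta_q=(n+1)\pi/2$, so $e^{i(n+1)\eta_q}=\pm 1$, and \eqref{000} delivers $|\kappa(u)|_{\gamma_q}(0)|=4(n+1)/n$, matching \eqref{e1}. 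For even $n$, with $\varphi_0=\pi/(2n(n+1))$, one checks the trigonometric identity
\begin{equation*}
\bigl|\,2n\,e^{-i\varphi_0}+2\cos(n\varphi_0)\,e^{-i(n+1)\varphi_0}\bigr|=2(n+1)\cos\!\frac{\pi}{2(n+1)},
\end{equation*}
and selects $q$ so that the argument of $e^{i(n+1)\eta_q}$ cancels the argument of $w^{(n+1)}(0)$ modulo $\pi$; this produces $|\kappa(u)|_{\gamma_q}(0)|=4(n+1)\cos(\pi/(2(n+1)))/n$, which matches \eqref{e2}.

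The main obstacle is the even-$n$ case: one needs to verify that the somewhat delicate angle $\varphi_0=\pi/(2n(n+1))$ simultaneously achieves the correct modulus $(n+1)\cos(\pi/(2(n+1)))$ and produces an argument of $w^{(n+1)}(0)$ that is of the form $-(n+1)\eta_q\pmod\pi$ for some admissible $q\in\{0,1,\dots,2n-1\}$ (equivalently, an argument lying in the lattice $\{(2q+1)\pi/(2n)+m\pi\}$). This is precisely the compatibility condition identified as the obstruction in the body of the proof of Theorem \ref{Main_theorem} (see the discussion following \eqref{onlyifeven}) and is the reason for the extra factor $\cos(\pi/(2(n+1)))$ in \eqref{e2}; the computation above realizes the boundary case of that obstruction and is therefore optimal. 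Once these alignments are checked, both \eqref{e1} and \eqref{e2} are attained by the constructed $u$, completing the proof of sharpness.
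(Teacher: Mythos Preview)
Your approach is conceptually natural but has genuine gaps that prevent it from being a valid proof of Lemma~\ref{sharpness} as stated.

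\textbf{The $L^1$ hypothesis is not met.} The extremizer $u=\RRe w$ with $w$ given by \eqref{extremers} has boundary data $T=\sum_{j=1}^{2n}b_jD^j\delta_{\varphi_0}$, a genuine distribution, \emph{not} an element of $L^1(S^1)$. So you cannot take $r_0=1$ in Theorem~\ref{Main_theorem}. Sharpness of \eqref{e1}--\eqref{e2} means the constants cannot be improved for functions satisfying \emph{all} the hypotheses; exhibiting a function outside the admissible class that attains equality does not do this. The paper handles this by building a sequence $u_{\varepsilon,\lambda}$ with \emph{smooth} boundary values whose curvature tends to the bound. (Alternatively one could restrict the extremizer to $B_r(0)$ for $r<1$ and let $r\to 1^-$, but then one still faces the next point.)

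\textbf{``Only $n$ nodal curves'' is not verified.} The vanishing order $n(u,0)=n$ guarantees that exactly $n$ nodal arcs emanate from the origin, but the hypothesis of Theorem~\ref{Main_theorem} is that these are the \emph{only} nodal curves in $B_{r_0}(0)$: no additional component of $Z(u)$ may appear away from $0$. Remark~\ref{R7} and \cite{WWZ} give only local information near $0$. Ruling out extra nodal curves for the extremizer is a nontrivial global argument (the paper does it later, in the proof of Theorem~\ref{extremer_case}, by analyzing zeros of $w'$ on $S^1$ and counting branches near the singular boundary point). You cannot skip this.

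\textbf{The even-$n$ computation contains an error.} Your claimed identity
\[
\bigl|2n\,e^{-i\varphi_0}+2\cos(n\varphi_0)\,e^{-i(n+1)\varphi_0}\bigr|=2(n+1)\cos\tfrac{\pi}{2(n+1)}
\]
is false: a direct calculation gives $\bigl|n e^{-i\varphi_0}+\cos(n\varphi_0)e^{-i(n+1)\varphi_0}\bigr|^2=n^2+(2n+1)\cos^2\tfrac{\pi}{2(n+1)}$, which is strictly larger than $(n+1)^2\cos^2\tfrac{\pi}{2(n+1)}$. The curvature formula \eqref{000} involves the \emph{real part} $\RRe\bigl(e^{i(n+1)\eta_q}w^{(n+1)}(0)\bigr)$, not the modulus, and the admissible phases $(n+1)\eta_q$ form a discrete lattice; for even $n$ one can never align $e^{i(n+1)\eta_q}$ with the argument of $w^{(n+1)}(0)$. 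What actually happens is that with $q=n/2$ one gets $e^{i(n+1)\eta_q}\bigl(ne^{-i\varphi_0}+\cos(n\varphi_0)e^{-i(n+1)\varphi_0}\bigr)=\pm\bigl(ne^{i\pi/(2(n+1))}+\cos\tfrac{\pi}{2(n+1)}\bigr)$, whose real part is $\pm(n+1)\cos\tfrac{\pi}{2(n+1)}$ but whose imaginary part is $\pm n\sin\tfrac{\pi}{2(n+1)}\neq 0$. So your strategy of ``compute the modulus, then align the phase'' cannot work here; the correct computation is the one in \eqref{unique} of the paper.
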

	\begin{proof}
		It suffices to find a sequence of $u_k$, such that the curvature of some nodal curve of $u_k$ tends to the upper bound as $k\rightarrow\infty$. Let $\alpha_0:=0$ for odd $n$ and $\alpha_0:=\frac{\pi}{2n(n+1)}$ for even $n$.
		
		First, we assume $g=\sum_{j=1}^{2n+1}c_j\delta_{\alpha_j}$, with undetermined parameters $c_j\in\mathbb{C}$ and $\alpha_1<\alpha_2<\cdots<\alpha_{2n+1}<\alpha_1+2\pi$, where $\delta_{\alpha_j}$ is the Dirac measure satisfying $\int_{0}^{2\pi} f(\alpha)\delta_{\alpha_j}\dd\theta=f(\alpha_j)$ for all $f\in C(S^1)$. Moreover, without loss of generality, let $a_n = 1/\pi$ and by (\ref{vanishing_order}),
		\begin{eqnarray*}
			&&2\pi\widehat{g}(k) = \sum_{j=1}^{2n+1}c_je^{-ik\alpha_j} = 0, \quad\forall -(n-1)\leq k\leq n-1,\\
			&&2\pi\widehat{g}(n) = \sum_{j=1}^{2n+1}c_je^{-in\alpha_j} = \pi a_n = 1,\\
			&&2\pi\widehat{g}(-n) = \sum_{j=1}^{2n+1}c_je^{in\alpha_j} = \pi a_n = 1.
		\end{eqnarray*}
		Then we have the following non-homogeneous linear equations
		\begin{eqnarray}\label{35}
			\left(\begin{matrix}
				1 & 1 &\cdots & 1\\
				e^{-i\alpha_1} & e^{-i\alpha_2} & \cdots & e^{-i\alpha_{2n+1}} \\
				\vdots & \vdots & \vdots & \vdots \\
				e^{-i2n\alpha_1} & e^{-i2n\alpha_2} & \cdots & e^{-i2n\alpha_{2n+1}}
			\end{matrix}\right)
			\left(\begin{matrix}
				e^{in\alpha_1}c_1\\
				e^{in\alpha_2}c_2\\
				\vdots\\
				e^{in\alpha_{2n+1}}c_{2n+1}
			\end{matrix}\right)
			=
			\left(\begin{matrix}
				1\\
				0\\
				\vdots\\
				0\\
				1
			\end{matrix}\right),
		\end{eqnarray}
		which is equivalent to 
		\begin{eqnarray}\label{equi}
			\left(\begin{matrix}
				1 & 1 &\cdots & 1\\
				\sin\alpha_1 & \sin\alpha_2 & \cdots & \sin\alpha_{2n+1}\\
				\cos\alpha_1 & \cos\alpha_2 & \cdots & \cos\alpha_{2n+1}\\
				\vdots & \vdots & \vdots & \vdots \\
				\sin n\alpha_1 & \sin n\alpha_2 & \cdots & \sin n\alpha_{2n+1}\\
				\cos n\alpha_1 & \cos n\alpha_2 & \cdots & \cos n\alpha_{2n+1}\\
			\end{matrix}\right)
			\left(\begin{matrix}
				c_1\\
				c_2\\
				\vdots\\
				c_{2n+1}
			\end{matrix}\right)
			=
			\left(\begin{matrix}
				0\\
				0\\
				\vdots\\
				0\\
				1
			\end{matrix}\right).
		\end{eqnarray}
		This implies that $c_j\in\mathbb{R}$ for all $j$. 
		By the inverse of Vandermonde matrix, we have 
		\begin{eqnarray}\label{ccccccc}
			e^{in\alpha_j}c_j = \frac{1+\prod_{m=1,m\neq j}^{2n+1}e^{-i\alpha_m}}{\prod_{m=1,m\neq j}^{2n+1}(e^{-i\alpha_j}-e^{-i\alpha_m})}.
		\end{eqnarray}
		Then
		\begin{eqnarray}
			&&2\pi\widehat{g}(n+1) = \sum_{j=1}^{2n+1}c_je^{-i(n+1)\alpha_j} \nonumber\\
			&&= \sum_{j=1}^{2n+1}\frac{e^{-i(2n+1)\alpha_j}}{\prod_{m=1,m\neq j}^{2n+1}(e^{-i\alpha_j}-e^{-i\alpha_m})}
			+\sum_{j=1}^{2n+1}\frac{e^{-i(2n+1)\alpha_j}\prod_{m=1,m\neq j}^{2n+1}e^{-i\alpha_m}}{\prod_{m=1,m\neq j}^{2n+1}(e^{-i\alpha_j}-e^{-i\alpha_m})}\nonumber\\
			&&= \sum_{j=1}^{2n+1}\frac{e^{-i(2n+1)\alpha_j}}{\prod_{m=1,m\neq j}^{2n+1}(e^{-i\alpha_j}-e^{-i\alpha_m})}
			+\sum_{j=1}^{2n+1}\frac{e^{-i2n\alpha_j}\prod_{m=1}^{2n+1}e^{-i\alpha_m}}{\prod_{m=1,m\neq j}^{2n+1}(e^{-i\alpha_j}-e^{-i\alpha_m})}\label{338}.
		\end{eqnarray}
		Let $x_j:=e^{-i\alpha_j}$. By Lagrange interpolation, for $f(x)=x^{2n+1}$,
		\begin{eqnarray*}
			p_{2n+1}(x) := \sum_{j=0}^{2n+1}f(x_j)\prod_{m\neq j}^{1\leq m\leq {2n+1}} \frac{x-x_m}{x_j-x_m}.
		\end{eqnarray*}
		As $f(x)-p_{2n+1}(x)$ is a polynomial of degree $2n+1$ and has roots $x_j, 1\leq j\leq {2n+1}$, so by Vieta's formulas, 
		\begin{eqnarray}\label{339}
			\sum_{j=1}^{2n+1}\frac{x_j^{2n+1}}{\prod_{m=1,m\neq j}^{2n+1}(x_j-x_m)}=\sum_{j=1}^{2n+1}x_j.
		\end{eqnarray}
		Similarly, by Lagrange interpolation, for $f(x) = x^{2n}$,
		\begin{eqnarray*}
			\tilde{p}_{2n+1} := \sum_{j=0}^{2n+1}f(x_j)\prod_{m\neq j}^{1\leq m\leq {2n+1}} \frac{x-x_m}{x_j-x_m}.
		\end{eqnarray*}
		As $f(x)-\tilde{p}_{2n+1}(x)$ is a polynomial of degree $2n$ and has roots $x_j, 1\leq j\leq {2n+1}$, so $f\equiv \tilde{p}$, that is,
		\begin{eqnarray*}
			x^{2n}=\sum_{j=1}^{2n+1}x_j^{2n}\prod_{m\neq j}^{1\leq m\leq {2n+1}} \frac{x-x_m}{x_j-x_m}.
		\end{eqnarray*}
		Thus, 
		\begin{eqnarray}\label{3310}
			\sum_{j=1}^{2n+1}\frac{x_j^{2n}}{\prod_{m=1,m\neq j}^{2n+1}(x_j-x_m)}=1.
		\end{eqnarray}
		Substituting equation (\ref{339}) and (\ref{3310}) into equation (\ref{338}), we get
		\begin{eqnarray}
			2\pi\widehat{g}(n+1) &=&
			\sum_{j=1}^{2n+1}\frac{x_j^{2n+1}}{\prod_{m=1,m\neq j}^{2n+1}(x_j-x_m)}
			+\prod_{m=1}^{2n+1}x_m\sum_{j=1}^{2n+1}\frac{x_j^{2n}}{\prod_{m=1,m\neq j}^{2n+1}(x_j-x_m)}\nonumber
			\\
			&=& 
			\sum_{j=1}^{2n+1}x_j
			+\prod_{m=1}^{2n+1}x_m = \sum_{j=1}^{2n+1}e^{-i\alpha_j}+e^{-i\sum_{j=1}^{2n+1}\alpha_j}\label{kappa}.
		\end{eqnarray}
		
		When $n$ is odd, setting $q=(n-1)/2$ in (\ref{qqqqqq}) and noting $arg(w^{(n)}(0))=\widehat{g}(n)=0$, we have $(n+1)\eta_q = (n+1)\pi/2$. Combining (\ref{31000}) and letting $\alpha_j\rightarrow \alpha_0=0$ for all $j$, we obtain 
		\begin{eqnarray*}
			\av{k(u)\big|_{\gamma_q}(0)}&=&\av{\frac{2}{n}\frac{\RRe\left(e^{i(n+1)\eta_q}\widehat{g}(n+1)\right)}{\widehat{g}(n)}}\\
			&=& \av{\frac{2\left(\sum_{j=1}^{2n+1}\cos\alpha_j+\cos\sum_{j=1}^{2n+1}\alpha_j\right)}{n}}\\
			&\rightarrow& \frac{4(n+1)}{n}.
		\end{eqnarray*}
		
		When $n$ is even, we only need to adjust the value of $\alpha_j$ and $\eta_q$. Setting $q = n/2$ in (\ref{qqqqqq}), we have $(n+1)\eta_q=(n+1)^2\pi/(2n)$. Let $\alpha_j\rightarrow \alpha_0=\frac{\pi}{2n(n+1)}$ for all $j$. Substituting equation (\ref{kappa}) into (\ref{qqqqqq}), we derive
		\begin{eqnarray*}
			\av{k(u)\big|_{\gamma_q}(0)}&=&\av{\frac{2}{n}\frac{\RRe\left(e^{i(n+1)\eta_q}\widehat{g}(n+1)\right)}{\widehat{g}(n)}}\\
			&=& \av{\frac{2\RRe\left(e^{i\frac{(n+1)^2}{2n}\pi}(\sum_{j=1}^{2n+1}e^{-i\alpha_j}+e^{-i\sum_{j=1}^{2n+1}\alpha_j})\right)}{n}}\\
			&=& \av{\frac{2\RRe\left(e^{i\frac{\pi}{2n}}(\sum_{j=1}^{2n+1}e^{-i\alpha_j}+e^{-i\sum_{j=1}^{2n+1}\alpha_j})\right)}{n}}\\
			&\rightarrow& \av{\frac{2\RRe\left(e^{i\frac{\pi}{2n}}((2n+1)e^{-i\frac{1}{2n(n+1)}\pi}+e^{-i\frac{2n+1}{2n(n+1)}\pi})\right)}{n}}\\
			&=& \av{\frac{2\RRe\left((2n+1)e^{i\frac{\pi}{2(n+1)}}+e^{-i\frac{\pi}{2(n+1)}}\right)}{n}}\\
			&=&\frac{4(n+1)}{n}\cos\frac{\pi}{2(n+1)}.
		\end{eqnarray*}

		The lemma will be proven as long as we show that there is a sequence of harmonic functions $u_m(r,\theta)$ such that the curvature of nodal curves of $u_m(r,\theta)$ tends to the upper bound and $u_m(r,\theta)$ has only $n$ nodal curves in $B_1(0)$ intersecting at $0$. For the latter part, since the condition (\ref{vanishing_order}) means $n(u,0)=n$ and there are at least $n$ nodal curves in $B_1(0)$ for all $u|_{S^1}=g$, it is sufficient to show that $g_m=u_m|_{S^1}\in C^\infty(S^1)$ satisfying (\ref{vanishing_order}) and has only $2n$ zeros on $S^1$. 
		
		Recall $g=\sum_{j=1}^{2n+1}c_j\delta_{\alpha_j}, c_j\in\mathbb{R}$, where $\alpha_{j+1}-\alpha_{j}=\varepsilon_0\leq 1/(2n)$ for all $1\leq j\leq 2n$ and $\alpha_{n+1}=\alpha_0$. Equivalently, $\alpha_{j}=\alpha_0+(j-n-1)\varepsilon_0$ for $1\leq j\leq 2n$. Since $P(r,\cdot)$ is an approximate identity, we know that $u(r,\theta)=P(r,\cdot)\ast g(\cdot) (\theta)\in C^\infty(B_1(0))$ is harmonic. Let $\varphi(\theta)$ denote a non-negative smooth function with compact support $[-1,1]$, and let integral of $\varphi(\theta)$ on $[-1,1]$ be $1$. Set $\varphi_\varepsilon(\theta):=\varphi(\theta/\varepsilon)/\varepsilon$, and $\varepsilon<\varepsilon_0/4$. Thus $g_\varepsilon:=\varphi_\varepsilon\ast g$, where 
		\begin{eqnarray}\label{convolution}
			\varphi_\varepsilon\ast g(\theta) := 
			\int_{-\pi}^{\pi}\varphi_\varepsilon(\mu)g(\theta-\mu)\dd\mu.
		\end{eqnarray}
		Then we know $g_\varepsilon\in C^\infty(S^1)$ and $g_\varepsilon\rightarrow g$ in $(C^\infty(S^1))'$ as $\varepsilon\rightarrow 0+$. Furthermore, 
		\begin{eqnarray*}
			g_\varepsilon\ast P_r (\tau) &=& \int_{-\pi}^{\pi}\int_{-\pi}^{\pi}\varphi_\varepsilon(\mu)g(\theta-\mu)P_r (\tau-\theta)\dd\mu\dd\theta \\
			&=& \int_{-\pi}^{\pi}\int_{-\pi}^{\pi}\varphi_\varepsilon(\mu)g(\theta)P_r (\tau-\mu-\theta)\dd\theta\dd\mu\\
			&=& (g\ast P_r)_\varepsilon (\tau) =: u_\varepsilon(r,\tau)\in C^\infty(\overline{B_1(0)}).
		\end{eqnarray*}

		Therefore, $u_\varepsilon(r,\cdot)=g_\varepsilon\ast P_r\rightarrow g\ast P_r=u(r,\cdot)$ in $C^\infty(S^1)$ for $r<1$. 
		Moreover, $\widehat{g_\varepsilon}(k)=2\pi \widehat{\varphi_\varepsilon}(k)\widehat{g}(k)$, so by condition of $\widehat{g}$, we have
		\begin{eqnarray*}
			\widehat{g_\varepsilon}(k) = 0,\ \forall -(n-1)\leq k\leq n-1,\quad\ \widehat{g_\varepsilon}(n) \neq 0
		\end{eqnarray*}
		which means the vanishing degree of $u_\varepsilon$ at $0$ is $n$. Note that for $\varepsilon<\varepsilon_0/4$, $\forall\av{\theta-\alpha_j}<\varepsilon$, we have $g_\varepsilon(\theta) =  c_j\varphi_\varepsilon(\theta-\alpha_j)$. If $\av{\theta-\alpha_j}\geq\varepsilon$ for all $j$, then $g_\varepsilon(\theta)=0$. Now we need to make a small perturbation to the function $g_\varepsilon$ to construct a smooth function which has only $2n$ zeros on $S^1$.  
		Let $\beta_j:=(\alpha_j+\alpha_{j+1})/2$ for $1\leq j\leq 2n$, $\beta_{2n+1}:=\beta_1+2\pi$, and
		\begin{eqnarray}\label{simple_function}
			h(\theta) := \sum_{j=1}^{2n}d_j\mathbbm{1}_{[\beta_j,\beta_{j+1})}(\theta),
		\end{eqnarray}
		where $d_j$ is a constant for all $j$. Assume $h(\theta+2\pi)=h(\theta)$. 
		
		To determine every constant $d_j$, we assume $\widehat{h}(k) = 0, \forall 0\leq k\leq n-1; \Re\widehat{h}(n) = 1$. That is, 
		\begin{eqnarray}\label{inverse}
			\quad\left(\begin{matrix}
				\beta_2-\beta_1  &\cdots & \beta_{2n+1}-\beta_{2n}\\
				\sin\beta_2-\sin\beta_1  & \cdots & \sin\beta_{2n+1}-\sin\beta_{2n}\\
				\cos\beta_2-\cos\beta_1  & \cdots & \cos\beta_{2n+1}-\cos\beta_{2n}\\
				\vdots  & \vdots & \vdots \\
				\sin n\beta_2-\sin n\beta_1  & \cdots & \sin n\beta_{2n+1}-\sin n\beta_{2n}
			\end{matrix}\right)
			\left(\begin{matrix}
				d_1\\
				d_2\\
				\vdots\\
				d_{2n}
			\end{matrix}\right)
			=
			\left(\begin{matrix}
				0\\
				0\\
				\vdots\\
				0\\
				{2n\pi}
			\end{matrix}\right).
		\end{eqnarray}
		By Lemma \ref{auxilary}, such linear equation is solvable, which means $h\in L^\infty(S^1)$ satisfies (\ref{vanishing_order}). 
		
		Let $v(r,\theta):=P(r,\cdot)\ast h(\cdot)$, and $v_\varepsilon(r,\theta):=P(r,\cdot)\ast(\varphi_\varepsilon\ast h)=\varphi_\varepsilon(\cdot)\ast v(r,\cdot)\in C^\infty(\overline{B_1(0)})$. The boundary value $h_\varepsilon=\varphi_\varepsilon\ast h\in C^\infty(S^1)$ also satisfies (\ref{vanishing_order}) for $\varepsilon<\varepsilon_0/4$, which means $v$ has $n$ nodal curves intersecting at $0$, denoted by $\gamma_l, l=1,\ldots, n$. By Lemma \ref{l9}, we can assume the maximal curve $\gamma_l(0,1)\subset B_1(0)$ and $\gamma_l(0), \gamma_l(1)\in S^1$ for all $l$. 
		
		Let $I_j^a:=\{e^{i\theta}|\av{\theta-\beta_j}\leq \varepsilon\}$ and $I_j^b:=\{e^{i\theta}|\beta_{j}+\varepsilon<\theta<\beta_{j+1}-\varepsilon\}$ for $j=1,\ldots,2n$. Then $S^1=\cup_{j=1}^{2n}(I_j^a\cup I_j^b)$. Combining (\ref{convolution}) and (\ref{simple_function}), we find that for all $\theta\in I_j^b$, $h_\varepsilon(\theta)=d_j$ holds for all $1\leq j\leq 2n$. Moreover, if $d_jd_{j-1}<0$, then $h_\varepsilon$ is strictly monotonous in $I_j^a$; if $d_jd_{j-1}\geq 0$ and $d_j^2+d_{j-1}^2\neq 0$,  then $h_\varepsilon$ keeps positive or negative in $I_j^a$. If $d_j=d_{j-1}=0$, then $h_\varepsilon\equiv 0$ in $I_j^a$. So $h_\varepsilon$ has at most one zero in $I_j^a$ or vanishes in $I_j^a$. However, the set $\{v_\varepsilon=0\}\cap S^1$ has at least $2n$ points $\cup_{m=1}^{2n}\{\tau_m\}:=\{\gamma_l(0),\gamma_l(1)|l=1,\ldots, n\}$. Since $\gamma_l$ intersects at $0$, and by the maximum principle, there exists at most one point $\tau_{m}\in I_j^a$ for every $j$. 
		
		For convenience, we assume $d_0=d_{2n}$ and $d_1=d_{2n+1}$. For $1\leq j\leq 2n$ such that $d_j\neq 0$, $h_\varepsilon$ never vanishes in $I_j^b$. If there exists $j$ such that $d_j=d_{j-1}=0$, then by maximum principle, we have at most one point $\tau_m$ in $I_j^a\cup I_{j}^b\cup I_{j-1}^a\cup  I_{j-1}^b$. Similarly, if exists $j$ such that $d_{j-1}\neq 0, d_{j}=0$, then we have at most one point $\tau_m$ in $I_j^a\cup I_{j}^b\cup I_{j+1}^a\cup I_{j+1}^b$ (whether $d_{j+1}=0$ or $d_{j+1}\neq 0$). For all $j$ such that $d_jd_{j-1}<0$, then we have exactly one point $\tau_m$ in $I_j^a\cup I_{j}^b$. For all $j$ such that $d_jd_{j-1}>0$, then we have no point $\tau_m$ in $I_j^a\cup I_{j}^b$. Thus, letting $m_0:=\#\{d_jd_{j-1}= 0\ \text{or}\  d_jd_{j-1}>0 |j=1,\ldots, 2n\}$, we have at most $2n-m_0$ points of $\tau_m$, which is a contradiction when $m_0\geq 1$. So $m_0=0$. Thus, there does not exist $j$ such that $d_jd_{j-1}\geq 0$. That is, $d_jd_{j-1}<0$, and there is a unique point $\tau_m$ lying in $I_j^a$ for every $j$. By maximum principle, there does not exist other nodal curves in $B_1(0)$. That is, $h_\varepsilon$ has only $2n$ zeros on $S^1$. Without loss of generality, assume $(-1)^nd_{2n}>0$. 
		
		Recall $c_j$ is given by (\ref{ccccccc}), so

		\begin{eqnarray*}
			c_j &=& \frac{1+\prod_{m=1,m\neq j}^{2n+1}e^{-i\alpha_m}}{\prod_{m=1,m\neq j}^{2n+1}(e^{-i\alpha_j}-e^{-i\alpha_m})}e^{-in\alpha_j}\\
			&=&\frac{e^{-in(\alpha_0+(j-n-1)\varepsilon_0)}+e^{-i(3n\alpha_0+(n-1)(j-n-1)\varepsilon_0)}}{\prod_{m=1,m\neq j}^{2n+1}(e^{-i(\alpha_0+(j-n-1)\varepsilon_0)}-e^{-i(\alpha_0+(m-n-1)\varepsilon_0)})}\\
			&=&\frac{e^{i(n\alpha_0-n(j-n-1)\varepsilon_0)}+e^{i(-n\alpha_0-(n-1)(j-n-1)\varepsilon_0)}}{\prod_{m=1,m\neq j}^{2n+1}(e^{-i(j-n-1)\varepsilon_0}-e^{-i(m-n-1)\varepsilon_0})}\\
			&=&\frac{2\cos(n\alpha_0)+O(\varepsilon_0)}{\prod_{m=1,m\neq j}^{2n+1}(i(m-j)\varepsilon_0+O(\varepsilon_0^2))}\\
			&=&\frac{2\cos(n\alpha_0)+O(\varepsilon_0)}{(-1)^n\varepsilon_0^{2n}\prod_{m=1,m\neq j}^{2n+1}(m-j)+O(\varepsilon_0^{2n+1})}.
		\end{eqnarray*}
		
		Note that $\cos(n\alpha_0)>0$ for all $n$, whether $\alpha_0=0$ or $\alpha_0=\frac{\pi}{2n(n+1)}$. Hence, the sign of $c_j$ is determined by $(-1)^n\prod_{m=1,m\neq j}^{2n+1}(m-j)$. That is, $c_jc_{j-1}<0$ for all $2\leq j\leq n+1$ and $(-1)^{n}c_1>0,(-1)^{n}c_{2n+1}>0$. Therefore, for all $\theta$ such that $g_\varepsilon(\theta)>0$, we have $h_\varepsilon(\theta)>0$; for all $\theta$ such that $g_\varepsilon(\theta)<0$, we have $h_\varepsilon(\theta)<0$. Let $\lambda>0$ and
		
		$$u_{\varepsilon,\lambda}:=u_\varepsilon+\lambda v_\varepsilon.$$
		Note that adding $g_\varepsilon$ does not change the sign of $h_\varepsilon$ and does not change the zeros of $h_\varepsilon$, which means $g_{\varepsilon,\lambda}:=g_\varepsilon+\lambda h_\varepsilon$ has only $2n$ zeros on $S^1$. Thus, $u_{\varepsilon,\lambda}$ has exactly $n$ nodal curves in $B_1(0)$, for all $\varepsilon<\varepsilon_0/4$. Moreover, 
		\begin{eqnarray*}
			2\pi\widehat{\varphi_\varepsilon}(k) = \int_{-\pi}^{\pi}\varphi_\varepsilon(\theta)e^{-ik\theta}\dd\theta\rightarrow 1, \ as\ \varepsilon\rightarrow 0+ .
		\end{eqnarray*}
		Hence, letting $\lambda\rightarrow0+$ and then $\varepsilon\rightarrow 0+$, we have
		\begin{eqnarray*}
			\av{k(u_{\varepsilon,\lambda})\big|_{\gamma_q}(0)}&=&\av{\frac{2}{n}\frac{\RRe\left(e^{i(n+1)\eta_q}(\widehat{g_\varepsilon}(n+1)+\lambda\widehat{h_\varepsilon}(n+1))\right)}{(\widehat{g_\varepsilon}(n)+\lambda\widehat{h_\varepsilon}(n))}}\\
			&=& \av{\frac{2}{n}\frac{\RRe\left(e^{i(n+1)\eta_q}\widehat{\varphi_\varepsilon}(n+1)(\widehat{g}(n+1)+\lambda\widehat{h}(n+1))\right)}{\widehat{\varphi_\varepsilon}(n)(\widehat{g}(n)+\lambda\widehat{h}(n))}}\\
			&\rightarrow& \av{k(u)\big|_{\gamma_q}(0)},
		\end{eqnarray*}
		as $\alpha_j\rightarrow \alpha_0$. Then we get a sequence of harmonic functions $u_m$, which has only $n$ nodal curves intersecting at $0$. Moreover, $\av{\kappa(u_m)\big|_{\gamma_q}(0)}$ tends to the upper bound.
	\end{proof}
	Now we are going to show the existence and the uniqueness of the extremer. ~\\
	
	\emph{Proof of Theorem \ref{extremer_case}.}
	For $T\in(C^\infty(S^1))^\prime$, the Poisson extension of $T$ is given by $u(r,\theta):=T(P_r(\theta-\cdot))$. Note $u(0)=T({1}/{2\pi})=0$. Extend $T$ to a complex distribution by $ T(f_1+if_2):=T(f_1)+iT(f_2), \forall f_1,f_2\in C^\infty(S^1)$ and note $P_r(\theta-\alpha)=\frac{1}{2\pi}\RRe\left(\frac{\zeta+z}{\zeta-z}\right)$, where $z=re^{i\theta},\zeta=e^{i\alpha}$. Therefore, for $r<1$, we have 
	\begin{eqnarray*}
		u(z) = \RRe \frac{1}{2\pi}T\left(\frac{\zeta+z}{\zeta-z}\right) 
		= \RRe \frac{1}{2\pi}T\left(\frac{2z}{\zeta-z}\right) 
		\in C^\infty(B_1(0)).
	\end{eqnarray*}
	Moreover, the above equality implies $u$ is the real part of some holomorphic function. Thus, $u$ is indeed a harmonic function in $B_1(0)$. Observing that $w_{k_0}(\zeta):=\sum_{k=k_0}^{+\infty}\zeta^{-k}z^k\xrightarrow{C^{\infty}(S^1)}0$ as $k_0\rightarrow \infty$, we get
	\begin{eqnarray*}
		u(z) = \RRe \frac{1}{\pi}T\left(\frac{\zeta^{-1}z}{1-\zeta^{-1}z}\right)
		=\RRe \frac{1}{\pi}T\left(\sum_{k=1}^{+\infty}\zeta^{-k}z^k\right)
		=\RRe \sum_{k=1}^{+\infty}\frac{1}{\pi}T\left(\zeta^{-k}\right)z^k,
	\end{eqnarray*}
	where $T\left(\zeta^{-k}\right)/2\pi$ can be regarded as $\widehat{g}(k)$ in the proof of Lemma \ref{L1}. Since the nodal set of $u$ at $0$ is the union of $n$ nodal curves, we have $T\left(\zeta^{-n}\right)\neq 0, T\left(\zeta^{-k}\right)=0, \forall 0\leq k\leq n-1$. Denoting $a_ke^{i\theta_k}=T\left(\zeta^{-k}\right)/{\pi}$ and $u=\RRe  w$, we still have the expansion
	$$
	w(z)=a_nz^n+a_{n+1}e^{i\theta_{n+1}}z^{n+1}+\cdots.
	$$
	Regarding integral as operation of the distribution $T$ which almost changes its sign for
	$2n$ times, with the same proof in Theorem \ref{Main_theorem}, we derive the same curvature estimates (\ref{e1}) and (\ref{e2}) for $u$.
	
	Now we give the conditions of $\varphi_j$ and $\eta_q$ when equality holds in Theorem \ref{Main_theorem}. We may assume $\varphi_j\in [0,2\pi)$ for all $j=0,1,\ldots,n$. 
	
	For odd $n$, we find that $\varphi_j$ has to be the same by equations (\ref{33}) and (\ref{onlyif1}), that is $\varphi_{j}=\varphi_0$ for all $j$. Moreover, with $\overline{\varphi}=\sum_{j=1}^{n}\varphi_j$, we have
	\begin{eqnarray}\label{444444}
		(n+1)\varphi_0\equiv(n+1)\eta_q\ (mod\ \pi).
	\end{eqnarray}
	
	For even $n$, note that equations (\ref{32}) and (\ref{33}) can not hold simultaneously, which implies that for equality case in (\ref{onlyifeven}), $e^{i(\theta_{n+1}+(n+1)\eta_q)}\neq e^{i(-\varphi_j+(n+1)\eta_q)}$ for some $j$. From conditions of equality case in the first three inequalities in (\ref{341}), we know that $c_j=-1$ for all $j=1,\ldots, n$ and
	\begin{eqnarray}\label{420}
		\overline{\varphi}\equiv-\varphi_j+(n+1)\eta_q+l\pi\ (mod\ 2\pi),\ \forall 1\leq j \leq n,
	\end{eqnarray}
	where $l\in\{0,1\}$ satisfies $\cos(-\varphi_j+(n+1)\eta_q+l\pi)>0$.
	Note that 
	\begin{eqnarray*}
		&&{a_{n+1}e^{i(\theta_{n+1}+(n+1)\eta_q)}+2\sum_{j=1}^nc_ja_ne^{i(-\varphi_j+(n+1)\eta_q)}}\\
		&&=\left({a_{n+1}e^{i(\theta_{n+1}+\bar{\varphi}+\varphi_0)}+2\sum_{j=1}^nc_ja_ne^{i(\bar{\varphi}+\varphi_0-\varphi_j)}}\right)e^{i(-\bar{\varphi}-\varphi_0+(n+1)\eta_q)}.
	\end{eqnarray*}
	Combining (\ref{30}) and (\ref{onlyifeven}), we know that 
	\begin{eqnarray}\label{421}
		(n+1)\eta_q-\varphi_{0}-\overline{\varphi}\equiv l\pi\ (mod\ 2\pi).
	\end{eqnarray}
	From (\ref{420}) and (\ref{421}), we know that 
	\begin{eqnarray}\label{422}
		\varphi_{j}\equiv \varphi_{0}\ (mod\ 2\pi),\ \forall 1\leq j \leq n.
	\end{eqnarray}
	Moreover, (\ref{444444}) still holds.

	Therefore, for all $n$, equation (\ref{422}) always holds. Without loss of generality, we may assume $\varphi_{j}=\varphi_0$ for all $j$.
	Then $g_0$ in Definition \ref{def1} can be given by $g_0(\theta)=(1 - \cos (\theta-\varphi_0))^n$. In this case, the inequality (\ref{integrate}) has to be an equality for $k=1$, which means for $\varphi_0$,
	\begin{eqnarray*}
		T\left((1 - \cos(\theta - \varphi_0))g_0(\theta)\right) = 0.
	\end{eqnarray*}
	But we know $T$ almost changes its sign for $2n$ times, so from (\ref{change_sign}) we claim for all non-negative functions $f\in C^\infty(S^1)$ with $f(\varphi_0)=0$,
	\begin{eqnarray}\label{distribution}
		T\left(g_0(\theta)f(\theta)\right)=0.
	\end{eqnarray}
	If not, we may assume that there is some smooth function $f\geq 0$, such that $T\left(g_0(\theta)f(\theta)\right)>0$. Note that ${f}$ is non-negative and ${f}(\varphi_0)=0, {f}^\prime(\varphi_0)=0$, which means ${f}$ is convex at $\varphi_0$. Therefore, there exists some small constant $\varepsilon<1$, such that $(1 - \cos(\theta - \varphi_0)) - \varepsilon{f}$ is non-negative on $S^1$. However, this means 
	\begin{eqnarray*}
		T\left(g_0(\theta)((1 - \cos(\theta - \varphi_0)) - \varepsilon{f})\right)=-\varepsilon T(g_0(\theta)f(\theta))<0,
	\end{eqnarray*}
	which is contradictory to (\ref{change_sign}). 
	
	Thus, (\ref{distribution}) holds for all $f\in C^\infty(S^1)$ with $f\geq 0, f(\varphi_0)=0$.
	Since $g_0T$ is a positive measure, we get that $g_0T$ is supported on $\{\varphi_0\}$ and hence $T$ is a distribution supported on a single point $\{\varphi_0\}$. Thus, $T$ is a finite linear combination of $\delta_{\varphi_0}$ and its derivatives. Assume $T=\sum_{j=0}^{m}b_jD^j\delta_{\varphi_0}$ and note $g_0(\theta)=(1 - \cos (\theta-\varphi_0))^n$. For all $f\geq 0, f(\varphi_0)=0$, we have
	\begin{eqnarray*}
		T\left((1 - \cos(\theta - \varphi_0))^nf(\theta)\right) = 0.
	\end{eqnarray*}
	
	If $m = 2n+2q$ for $q\geq 1$, then we have
	\begin{eqnarray*}
		T\left((1 - \cos(\theta-\varphi_0))(1 - \cos (\theta-\varphi_0))^{q-1}(1 - \cos (\theta-\varphi_0))^n\right) = c_mb_m = 0,
	\end{eqnarray*}
	where $c_m$ is a non-zero constant. Hence $b_m=0$.
	
	If $m = 2n+2q-1$ for $q\geq 1$, then we have
	\begin{eqnarray*}
		T\left(\sin(\theta-\varphi_0)(1 - \cos (\theta-\varphi_0))^{q-1}(1 - \cos (\theta-\varphi_0))^n\right) = c_mb_m = 0,
	\end{eqnarray*}
	hence $b_m=0$.
	Therefore, $T=\sum_{j=0}^{2n}b_jD^j\delta_{\varphi_0}$. Note $T(1)=b_0=0$. Moreover, from the condition that $n$ nodal curves intersect at $0$, after normalization, we may assume
	\begin{eqnarray*}
		&&\frac{1}{\pi}T\left(\zeta^{-k}\right) = \frac{1}{\pi}\sum_{j=1}^{2n}b_j(ik)^je^{-ik\varphi_0} = 0, \quad\forall 1\leq k\leq n-1,\\
		&&\frac{1}{\pi}T\left(\zeta^{-n}\right) = \frac{1}{\pi}\sum_{j=1}^{2n}b_j(in)^je^{-in\varphi_0} = 1.
	\end{eqnarray*}
	Calculating the real part and the imaginary part respectively, we obtain
	\begin{eqnarray}\label{aaaaaaa}
		\left(\begin{matrix}
			1 & 1 &\cdots & 1\\
			2^2 & 2^4 & \cdots & 2^{2n}\\
			\vdots & \vdots & \vdots & \vdots \\
			n^2 & n^4 & \cdots & n^{2n}\\
		\end{matrix}\right)
		\left(\begin{matrix}
			-b_2\\
			b_4\\
			\vdots\\
			(-1)^nb_{2n}
		\end{matrix}\right)
		=
		\left(\begin{matrix}
			0\\
			\vdots\\
			0\\
			\pi\cos n\varphi_0
		\end{matrix}\right),
	\end{eqnarray}
	and 
	\begin{eqnarray}\label{bbbbbbb}
		\left(\begin{matrix}
			1 & 1 &\cdots & 1\\
			2^1 & 2^3 & \cdots & 2^{2n-1}\\
			\vdots & \vdots & \vdots & \vdots \\
			n^1 & n^3 & \cdots & n^{2n-1}\\
		\end{matrix}\right)
		\left(\begin{matrix}
			b_1\\
			-b_3\\
			\vdots\\
			(-1)^{n-1}b_{2n-1}
		\end{matrix}\right)
		=
		\left(\begin{matrix}
			0\\
			\vdots\\
			0\\
			\pi\sin n\varphi_0
		\end{matrix}\right),
	\end{eqnarray}
	which gives the formula of $T=\sum_{j=1}^{2n}b_jD^j\delta_{\varphi_0}$. Set $\widetilde{w}(z)={w}(e^{i\varphi_0}z)$ and corresponding boundary distribution $\widetilde{T}(z)={T}(e^{i\varphi_0}z)=\sum_{j=1}^{2n}b_jD^j\delta_{0}$. Thus, by expansion, we have
	\begin{eqnarray*}
		\widetilde{w}(z)&=&\frac{1}{\pi}\sum_{k=1}^{+\infty}\widetilde{T}\left(e^{-ik\theta}\right)z^k\\
		&=& \frac{1}{\pi}\sum_{k=1}^{+\infty}\left(\sum_{j=1}^{2n}(-1)^j(-ik)^jb_j\right)z^k\\
		&=& \frac{1}{\pi}\sum_{k=n}^{+\infty}\left(\sum_{j=1}^{n}(-1)^jk^{2j}b_{2j}+i\sum_{j=1}^{n}(-1)^{j-1}k^{2j-1}b_{2j-1}\right)z^k.
	\end{eqnarray*}
	With the inverse of Vandermonde matrix, we derive
	\begin{eqnarray*}
	\frac{(-1)^jb_{2j}}{\pi} = \frac{(-1)^{n-j}e_{n-j}(\{1^2,2^2,\ldots,(n-1)^2\})}{n^2\prod_{m=1}^{n-1}(n^2-m^2)}\cos n\varphi_0,\ \forall 1\leq j\leq n,
	\end{eqnarray*}	
	where $e_{n-j}$ denotes the elementary symmetric polynomial of degree $n-j$ and $e_0=1$. Therefore, setting $C=1/n^2\prod_{m=1}^{n-1}(n^2-m^2)=1/n(2n-1)!$, we have
	\begin{eqnarray*}
		\frac{1}{\pi}\sum_{j=1}^{n}(-1)^jk^{2j}b_{2j} &=& C(-1)^{n-j}e_{n-j}(\{1^2,2^2,\ldots,(n-1)^2\})k^{2j}\cos n\varphi_0\\
		&=& Ck^2\prod_{j=1}^{n-1}(k^2-j^2)\cos n\varphi_0.
	\end{eqnarray*}	
	Note that $k\prod_{j=1}^{n-1}(k^2-j^2)=\left(\begin{matrix}
		k+n-1\\ k-n
	\end{matrix}\right)(2n-1)!$. We obtain 
	\begin{eqnarray*}
		\frac{1}{\pi}\sum_{k=n}^{+\infty}\sum_{j=1}^{n}(-1)^jk^{2j}b_{2j}z^k
		&=& \sum_{k=n}^{+\infty}Ck\left(\begin{matrix}
			k+n-1\\ k-n
		\end{matrix}\right)(2n-1)!z^k\cos n\varphi_0\\
		&=& Cz^n \sum_{k=0}^{+\infty}(n+k)\left(\begin{matrix}
			k+2n-1\\ k
		\end{matrix}\right)(2n-1)!z^k\cos n\varphi_0\\
		&=& C(2n-1)!z^n \left(\frac{n}{(1-z)^{2n}}+\frac{2nz}{(1-z)^{2n+1}}\right)\cos n\varphi_0.
	\end{eqnarray*}	
	Similarly, we have
	\begin{eqnarray*}
		\frac{1}{\pi}\sum_{j=1}^{n}(-1)^{j-1}k^{2j-1}b_{2j-1} = Cnk\prod_{j=1}^{n-1}(k^2-j^2)\sin n\varphi_0,
	\end{eqnarray*}	
	and 
	\begin{eqnarray*}
		\frac{1}{\pi}\sum_{k=n}^{+\infty}\sum_{j=1}^{n}(-1)^{j-1}k^{2j-1}b_{2j-1}z^k
		&=& \sum_{k=n}^{+\infty}Cn\left(\begin{matrix}
			k+n-1\\ k-n
		\end{matrix}\right)(2n-1)!z^k\sin n\varphi_0\\
		&=& C(2n-1)!\frac{nz^n }{(1-z)^{2n}}\sin n\varphi_0
	\end{eqnarray*}	
	In conclusion, noting $Cn(2n-1)!=1$, we get
	\begin{eqnarray*}
		{\widetilde{w}(z)}=\frac{z^n }{(1-z)^{2n}}e^{in\varphi_0}+\frac{2z^{n+1}}{(1-z)^{2n+1}}\cos n\varphi_0.
	\end{eqnarray*}	
	Thus, 
	\begin{eqnarray}\label{extremer}
		{w}(z)=\frac{z^n }{(1-e^{-i\varphi_0}z)^{2n}}+\frac{2z^{n+1}}{(1-e^{-i\varphi_0}z)^{2n+1}}e^{-i(n+1)\varphi_0}\cos n\varphi_0,
	\end{eqnarray}	
	which implies ${w}^{(n)}(0)=n!$ and 
	\begin{eqnarray*}
		{w}^{(n+1)}(0)=2n(n+1)!e^{-i\varphi_{0}}+2(n+1)!e^{-i(n+1)\varphi_{0}}\cos n\varphi_0.
	\end{eqnarray*}	
	To calculate the value of $\varphi_0$, we use equation (\ref{000}) and derive that
	\begin{eqnarray}
		\nonumber\kappa(u)\big|_{\gamma_q}(0)&=&  -\frac{2}{n^2+n}\frac{\RRe\left(e^{i(n+1)\eta_q}w^{(n+1)}(0)\right)}{\av{w^{(n)}(0)}}.\\
		&=&-\frac{4}{n}\RRe\left(e^{i(n+1)\eta_q}(ne^{-i\varphi_{0}}+e^{-i(n+1)\varphi_{0}}\cos n\varphi_0)\right).\label{unique}
	\end{eqnarray}	
	
	For odd $n$, the extreme case $\av{k(u)\big|_{\gamma_q}}=4(n+1)/n$. Hence, (\ref{unique}) tells us 
	\begin{eqnarray*}
		\av{\RRe\left(e^{i(n+1)\eta_q}(ne^{-i\varphi_{0}}+e^{-i(n+1)\varphi_{0}}\cos n\varphi_0)\right)} = n+1.
	\end{eqnarray*}
	Recall $\eta_q=\frac{q\pi+\frac{1}{2}\pi}{n}$. As a result, with equation (\ref{444444}), there are some integers $k_1$ and $k_2$, such that
	\begin{eqnarray*}
		\varphi_0=\frac{k_1\pi}{n},\ (n+1)(q+\frac{1}{2})-(n+1)k_1=nk_2.
	\end{eqnarray*}
	Therefore, letting $k_3=k_2+k_1-q=1$, we have
	\begin{eqnarray*}
		q+\frac{n+1}{2}-k_1=n.
	\end{eqnarray*}
	For all $0\leq k_1\leq n-1$, there exists a $0\leq q\leq 2n-1$ satisfying above equation. Hence, for all $0\leq k_1\leq n-1$, $u(z)=\RRe w(z)$ in (\ref{extremer}) with $\varphi_0=\frac{k_1\pi}{n}$ gives a unique extremer of inequality (\ref{e1}) (note that different $k_1$ corresponds to different nodal curve which attains curvature upper bound at $0$).
	
	For even $n$, the extreme case gives $\av{k(u)\big|_{\gamma_q}}=4(n+1)\cos(\frac{\pi}{2(n+1)})/n$. With equation (\ref{444444}), we know there is some integer $k_2$ such that $(n+1)\eta_q-(n+1)\varphi_0=k_2\pi$. Then from (\ref{unique}), we have 
	\begin{eqnarray*}
		\av{\RRe\left(e^{i(n+1)\eta_q}(ne^{-i\varphi_{0}}+e^{-i(n+1)\varphi_{0}}\cos n\varphi_0)\right)} = (n+1)\cos(n\varphi_0).
	\end{eqnarray*}
	Thus, $\av{\cos(n\varphi_0)}=\cos(\frac{\pi}{2(n+1)})$, which means there is some integer $k_1$, such that
	\begin{eqnarray*}
		\varphi_0=\frac{k_1\pi}{n}+\frac{\pi}{2n(n+1)},\ (n+1)(q+\frac{1}{2})-(n+1)(k_1+\frac{1}{2(n+1)})=nk_2.
	\end{eqnarray*}
	Therefore, letting $k_3=k_2+k_1-q=1$, we have
	\begin{eqnarray*}
		q+\frac{n}{2}-k_1=n.
	\end{eqnarray*}
	Similarly, for all $0\leq k_1\leq n-1$, there exists a $0\leq q\leq 2n-1$ satisfying above equation. Hence, for all $0\leq k_1\leq n-1$, $u(z)=\RRe w(z)$ in (\ref{extremer}) with $\varphi_0=\frac{k_1\pi}{n}+\frac{\pi}{2n(n+1)}$ gives a unique extremer of inequality (\ref{e2}).
	
	To end the proof, we need to show the Poisson extension $u$ of  $g(\theta)$ has only $n$ nodal curves in $B_1(0)$, intersecting at $0$. Note that the holomorphic function $w$ is given by
	\begin{eqnarray*}
		{w}(z)&=&\frac{z^n }{(1-e^{-i\varphi_0}z)^{2n}}+\frac{2z^{n+1}}{(1-e^{-i\varphi_0}z)^{2n+1}}e^{-i(n+1)\varphi_0}\cos n\varphi_0\\
		&=& \frac{2h(z)\cos n\varphi_0}{(1-e^{-i\varphi_0}z)^{2n+1}},
	\end{eqnarray*}
	where $h(z)$ has no singular points and $h(z)\rightarrow 1$ as $z\rightarrow e^{i\varphi_{0}}$. 
	Hence there are some $\varepsilon_0>0$ such that for all $\av{e^{i\varphi_{0}}-z}<\varepsilon_0$, $\sqrt[2n+1]{h(z)}$ is a single valued holomorphic function. Let
	\begin{eqnarray*}
		\frac{1}{Z} := \frac{\sqrt[2n+1]{h(z)}}{1-e^{-i\varphi_{0}}z},
	\end{eqnarray*}
	which gives a biholomorphic map $Z:B_{\varepsilon_0}(e^{i\varphi_{0}})\rightarrow Z(B_{\varepsilon_0}(e^{i\varphi_{0}}))\subset B_{2\varepsilon_0}(0)$. Thus
	\begin{eqnarray*}
		u(z) = \RRe w(z) = \RRe \frac{2\cos n\varphi_0}{Z^{2n+1}}.
	\end{eqnarray*}
	Then $u(z)=0$ is equivalent to 
	$$\RRe \frac{1}{Z^{2n+1}}=\RRe \frac{\bar{Z}^{2n+1}}{\av{Z}^{4n+2}}=\RRe \frac{{Z}^{2n+1}}{\av{Z}^{4n+2}}=0.$$
	Following from that $\RRe{{Z}^{2n+1}}=0$ gives $2n+1$ $Z$-nodal curves and $Z$ is a biholomorphic map, we have $u(z)=0$ has $2n+1$ $z$-nodal curves in $B_{\varepsilon_0}(e^{i\varphi_{0}})\backslash\{e^{i\varphi_{0}}\}$.
	
	Since $w(z)$ has only one singular point at $e^{i\varphi_{0}}$, we know that $u(z)=\RRe w(z)$ is well defined and harmonic in $\mathbb{C}\backslash\{e^{i\varphi_{0}}\}$.
	Furthermore, boundary value $T$ is supported on $\{e^{i\varphi_{0}}\}$, so $u|_{S^1\backslash\{e^{i\varphi_{0}}\}}=0$.
	
	If there is some nodal curve $\gamma$ in $B_1(0)$ whose end $z_0$ is on $S^1\backslash\{e^{i\varphi_{0}}\}$, then $\gamma$ intersects $S^1\backslash\{e^{i\varphi_{0}}\}$ at $z_0$, and we know that there are at least two nodal curves across $z_0$. Thus, $\av{w^\prime(z_0)}=\av{\nabla u(z_0)}=0$, which is equivalent to $\av{\widetilde{w}^\prime(e^{-i\varphi_{0}}z_0)}=0$. But by direct calculation,
	\begin{eqnarray*}
		\widetilde{w}^\prime(z) = \frac{nz^{n-1}(1+z) }{(1-z)^{2n+1}}e^{in\varphi_0}+\frac{\left((n+1)z^{n}+nz^{n+1}\right)2\cos n\varphi_0}{(1-z)^{2n+2}},
	\end{eqnarray*}
	which means $w^\prime(z_0)=0$ if and only if for $z=e^{-i\varphi_{0}}z_0:=e^{i\theta}\in S^1\backslash\{1\}$,
	\begin{eqnarray}
		&&n(1-z^2)e^{in\varphi_0}+\left((n+1)z+nz^{2}\right)2\cos n\varphi_0 \nonumber\\&&= ne^{in\varphi_0}+nz^2e^{-in\varphi_0}+(n+1)(e^{in\varphi_0}+e^{-in\varphi_0})z\nonumber\\
		&&=z\left(n\overline{z}e^{in\varphi_0}+nze^{-in\varphi_0}+(n+1)(e^{in\varphi_0}+e^{-in\varphi_0})\right)\nonumber\\
		&&=2z\left(n\cos{(\theta-n\varphi_0)}+(n+1)\cos(n\varphi_0)\right)=0,\label{derivative}
	\end{eqnarray}
	where $\av{\cos(n\varphi_0)}=1$ for odd $n$ and $\av{\cos(n\varphi_0)}=\cos\frac{\pi}{2(n+1)}$ for even $n$. For odd $n$, equation (\ref{derivative}) can not hold for any $\theta$. For even $n$, set $f(x)=\cos(\pi x/2)-(1-x)$ for $0<x\leq 1/3$. Then $f^{\prime\prime}(x)=-\pi^2/4\cos(\pi x/2)<0$. So $f$ is concave on $0<x\leq 1/3$. Note that $f(1/3)>f(0)$. Therefore, $f(x)>f(0)=0$ for all $0<x\leq 1/3$, and then $\cos(\pi /2(n+1))>n/(n+1)$, which means equation (\ref{derivative}) can not hold for any $\theta$. In conclusion, each end of every nodal curve in $B_1(0)$ must be $e^{i\varphi_{0}}$.
	
	Now assume $u$ has at least $n+1$ nodal curves $\cup_{j=1}^{n+1}\gamma_j$, $\gamma_1,\ldots,\gamma_n$ intersect at $0$ and $\gamma_{n+1}$ does not pass $0$. By  Lemma \ref{l9}, for all $j$, both ends of $\gamma_j$ in $B_{1}(0)$ will extend to the boundary. From above discussion, we know that each end of every nodal curve is $e^{i\varphi_{0}}$ . Therefore, there are at least $2n+2$ ends coincident with $e^{i\varphi_{0}}$. Since $\av{z}=1$ is also a $z$-nodal curve and each end represents a nodal curve in $B_{\varepsilon}(1)$ for small $\varepsilon>0$, we have at least $2n+3$ $z$-nodal curves in $B_{\varepsilon}(1)$, which gives a contradiction. In conclusion, $u$ has at most $n$ nodal curves in $B_{1}(0)$. But initial data (\ref{aaaaaaa}) implies that $u$ has $n$ nodal curves intersecting at $0$. So we derive that $u$ has exactly $n$ nodal curves in $B_{1}(0)$ and intersecting at point $0$ and point $e^{i\varphi_{0}}$.
	\hfill$\square$\par
	\begin{remark}
		The assumptions that $u$ is a Poisson extension of a distribution $T$, $T$ almost changes its sign for $2n$ times and $u$ has $n$ nodal curves intersecting at $0$ can not be removed. By the result of \cite{ramm2022dirichlet} and Lemma \ref{l9}, the condition $T=g\in L^1(S^1)$ has only $n$ nodal curves intersecting at $0$ is enough to show $u$ is a Poisson extension of $g$ and $g$ almost changes its sign for $2n$ times. But for a distribution $T$, the uniqueness of harmonic extension $u$ of boundary value $T$ is unknown and  the property of nodal curves of $Z(u)$ is unknown near $S^1$. 
	\end{remark}
	\begin{figure}[htbp]
		\centering  
		\subfigure[$u=0$]{
			\includegraphics[width=4.2cm]{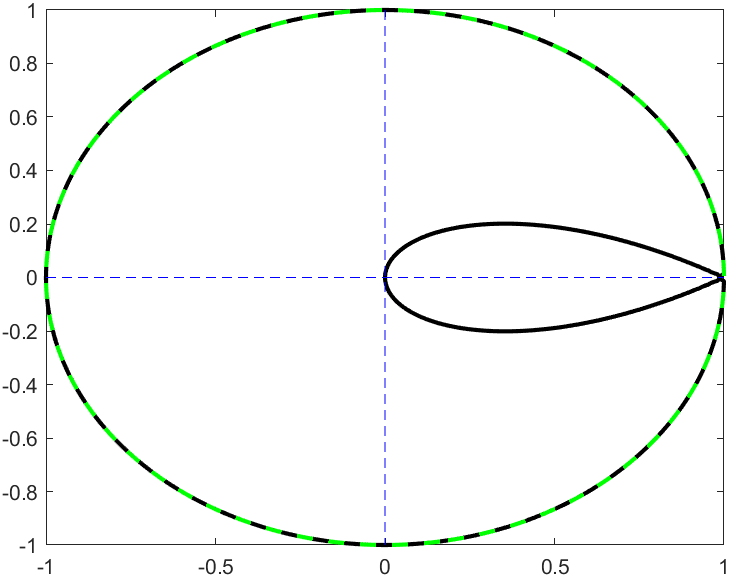}}
		\subfigure[$\tilde{u}=0$]{
			\includegraphics[width=4.2cm]{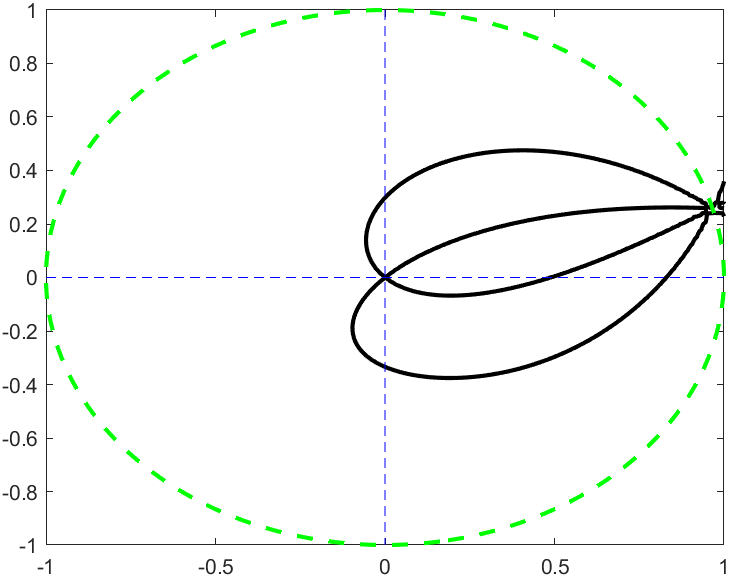}}
		\caption{Nodal sets of $u$ and $\tilde{u}$}
	\end{figure}
	
	For $n=1$, $T=D^2\delta_0$, and the corresponding harmonic function $u(x,y)$ is formulated by
	\begin{eqnarray*}
		\frac{2\left(1-x^2-y^2\right)\left(x(1-2x+x^2+y^2)+4y^2\right)}{\left(x^2- 2x+y^2+1\right)^3}.
	\end{eqnarray*}
	
	For $n=2$, $\widetilde{T}=-\frac{1}{12}D^1\delta_{\frac{\pi}{12}}+\frac{1}{8\sqrt{3}}D^2\delta_{\frac{\pi}{12}}-\frac{1}{12}D^3\delta_{\frac{\pi}{12}}/12+\frac{1}{8\sqrt{3}}D^4\delta_{\frac{\pi}{12}}$, and $\widetilde{u}(x,y)$ denotes the corresponding harmonic function.
	
	The graph of $\{u=0\}$ and $\{\tilde{u}=0\}$ is illustrated as above. We can see that $Z(u)$ almost forms a closed contour in $B_1(0)$, and so does every nodal curve of $Z(\tilde{u})$.

	\section{Curvature estimates at zeros around $0$ and area estimates}
	Without loss of generality,  we take $r_0=1$ in Theorem \ref{theorem2} , where $u$ has only $n\geq 1$ nodal curves intersecting at the origin in $B_1(0)$. Combining Taylor expansion in Lemma \ref{L3.12} and growth estimates in Lemma \ref{L1}, we can give a proof of Theorem \ref{theorem2}.~\\
	
	\emph{Proof of Theorem \ref{theorem2}}.
	Note that $\forall z\in Z(u)$, $\nabla u(z)\neq 0$. Thus, equation (\ref{E3.3}) holds.
	By the formula $w(z)=\frac{1}{\pi i}\int_{S^1} \frac{g(\zeta)}{\zeta-z}\dd\zeta$, we have
	\begin{eqnarray*}
		w^\prime(z) &=& \frac{1}{\pi} \int_{0}^{2\pi} g(\theta)e^{-i\theta}(1-ze^{-i\theta})^{-2}\dd\theta = z^{n-1}\sum_{k=n}^\infty ka_{k}e^{i\theta_k}z^{k-n}\\
		w^{\prime\prime}(z) &=& \frac{2}{\pi} \int_{0}^{2\pi} g(\theta)e^{-i2\theta}(1-ze^{-i\theta})^{-3}\dd\theta = z^{n-2}\sum_{k=n}^\infty k(k-1)a_{k}e^{i\theta_k}z^{k-n}.
	\end{eqnarray*}
	With above equations and equation (\ref{E3.3}), we can get
	\begin{eqnarray*}
		\av{\kappa(u)(z)}&=&\av{\frac{\RRe\left(w^{\prime\prime}(z)\overline{w^{\prime}(z)}^2\right)}{w^{\prime}(z)^3}}\\
		&=&\av{\frac{\RRe\left(({z}^{n-2}\sum_{k=n}^\infty k(k-1)a_{k}e^{i\theta_k}{z}^{k-n})(\bar{z}^{n-1}\sum_{k=n}^\infty ka_{k}e^{-i\theta_k}\bar{z}^{k-n})^2\right)}{(z^{n-1}\sum_{k=n}^\infty ka_{k}e^{i\theta_k}z^{k-n})^3}}.
	\end{eqnarray*}
	Recall that
	\begin{eqnarray*}
		\RRe w(z) = \RRe (z^n\sum_{k=n}^\infty a_{k}e^{i\theta_k}z^{k-n}) = 0.
	\end{eqnarray*}
	So, letting $v_{n+1}(z):=\sum_{k=n+1}^\infty ka_{k}e^{i\theta_k}{z}^{k-n}$, we have
	\begin{flalign*}
		\RRe&\left(({z}^{n-2}n(n-1)a_{n}e^{i\theta_n})(\bar{z}^{n-1}\sum_{k=n}^\infty ka_{k}e^{-i\theta_k}\bar{z}^{k-n})^2\right)&\\
		=&\RRe\left(({z}^{n-2}n(n-1)a_{n}e^{i\theta_n})(\bar{z}^{n-1}na_ne^{-i\theta_n})^2+({z}^{n-2}n(n-1)a_{n}e^{i\theta_n})2\bar{z}^{n-1}na_ne^{-i\theta_n}\times\right.&\\
		&\left.(\bar{z}^{n-1}\overline{v_{n+1}(z)}) +({z}^{n-2}n(n-1)a_{n}e^{i\theta_n})(\bar{z}^{n-1}\overline{v_{n+1}(z)})^2\right)&\\
		=&\RRe\left(-(\av{z}^{2(n-2)}n^2(n-1)a_n^2\bar{z}^{n}\sum_{k=n+1}^\infty a_{k}e^{-i\theta_k}\bar{z}^{k-n})+({z}^{n-2}n(n-1)a_{n}e^{i\theta_n})2\bar{z}^{n-1}\times\right.&\\
		&\left.na_ne^{-i\theta_n}(\bar{z}^{n-1}\overline{v_{n+1}(z)}) +({z}^{n-2}n(n-1)a_{n}e^{i\theta_n})(\bar{z}^{n-1}\overline{v_{n+1}(z)})^2\right).&
	\end{flalign*}
	Therefore, the remainder term of the numerator in expansion of $\kappa(u)(z)$ is $O(\av{z^{3(n-1)}})$. 
	
	Based on Lemma \ref{L1}, we obtain
	\begin{eqnarray*}
		\av{a_k}\leq 2^{n+2}nk^{2n}\av{a_n}, \forall k\geq n+1.
	\end{eqnarray*}
	So there exists a constant $c(n)$ depending on $n$, such that for all $\av{z}\leq c(n)$,
	\begin{eqnarray*}
		\av{\sum_{k=n}^\infty k(k-1)a_{k}e^{i\theta_k}{z}^{k-n}}\leq C_1(n)\av{a_n},\quad
		\av{v_{n+1}(z)} \leq \frac{\av{a_n}}{2}.
	\end{eqnarray*}
	Thus, there exists constants $C_2(n)$ and $C(n)$ such that
	\begin{eqnarray*}
		\av{\kappa(u)(z)}&\leq&\av{\frac{C_2(n)a_n^3}{(na_n-\frac{\av{a_n}}{2})^3}}\\
		&\leq& C(n),
	\end{eqnarray*}
	which gives the conclusion.\hfill$\square$\par~\\
	
	Now we come to the proof of Theorem \ref{theorem3}. 
	In fact, inspired by the proof of Lemma 3.1 in \cite{logunov2018nodal}, we only need to show that the doubling index of $u$ is finite in $B_{r(n)}$, which is a natural result of growth estimates of Fourier coefficients.~\\
	
	\emph{Proof of Theorem \ref{theorem3}}. By maximum principle and the condition that $u$ has $N$ nodal domains in $B_1(0)$, we have at most $N-1$ nodal curves in $B_1(0)$. With a similar proof in Lemma \ref{l9}, we know that $g = u|_{S^1}$ almost changes its sign on $S^1$ for at most $2N-2$ times. Without loss of generality, we may assume $g = u|_{S^1}$ almost changes its sign on $S^1$ for $2N-2$ times. Furthermore, with growth estimates (\ref{11113}), the following equation holds in the strong sense for any $r<1$: 
	\begin{eqnarray*}
		u(re^{i\theta}) =  \sum_{k=n}^{+\infty}a_kr^k\cos(k\theta + \theta_k),
	\end{eqnarray*}
	where $n\leq N-1$ is the vanishing order of $u$ at $0$. 
	And the frequency of $u$ in $B_r$ is given by
	\begin{eqnarray}\label{55555}
		\beta(r) = \frac{r\int_{B_r}\av{\nabla u}^2}{\int_{\partial B_r}u^2} = \frac{{\sum_{k=n}^{+\infty}ka_k^2 r^{2k}}}{{\sum_{k=n}^{+\infty}a_k^2 r^{2k}}},
	\end{eqnarray}
	see \cite{han2013nodal} for details. 
	It follows from the growth estimates (\ref{11113}) that there exists a constant $r(N)\leq\frac{1}{2}$, such that
	\begin{eqnarray*}
		\sum_{k=N}^{+\infty}ka_k^2 r(N)^{2k}\leq \sum_{k=N}^{+\infty}2^{2N}(N-1)^2k^{4N-3}A_{N-1}^2r(N)^{2k}\leq {r(N)^{2N-2}}A_{N-1}^2,
	\end{eqnarray*}
	where $A_{N-1}=\sup_{n\leq l\leq N-1}\av{a_l}$. 
	With the above estimates, we can derive from (\ref{55555}) that
	\begin{eqnarray*}
		\beta(r(N)) &\leq& \frac{{\sum_{k=n}^{N-1}a_k^2 r(N)^{2k}+{r(N)^{2N-2}}A_{N-1}^2}}{{\sum_{k=n}^{N-1}a_k^2 r(N)^{2k}}}\\
		&=&1+\frac{{r(N)^{2N-2}}A_{N-1}^2}{\sum_{k=n}^{N-1}a_k^2 r(N)^{2k}}\leq 2.
	\end{eqnarray*}
		
	Recall that the doubling index is given by $$2^{\mathcal{N}(r)}:=\frac{\sup_{B_{2r}(0)}\av{u}}{\sup_{B_{r}(0)}\av{u}}.$$
	
	Using the relation between frequency and doubling index (see \cite{logunov2023almost} for details), we have
	\begin{eqnarray*}
		\mathcal{N}(\frac{r(N)}{3}) \leq 2\beta(r(N)) + C\leq 4+C\leq 3C,
	\end{eqnarray*}
	where $C\geq 2$ is a universal constant. This inequality tells us that the growth of $u$ is not rapid in $B_{2r(N)/3}(0)$, that is, 
	\begin{eqnarray}\label{e52}
		\sup_{B_{2r(N)/3}(0)}\av{u}\leq 2^{3C}\sup_{B_{r(N)/3}(0)}\av{u}.
	\end{eqnarray}
	Applying Harnack inequality to $\sup_{B_{r(N)/2}(0)} u \pm u$ and $\sup_{B_{r(N)/2}(0)} (-u) \pm u$, it follows from the fact $u(0)=0$ that
	\begin{eqnarray}\label{e53}
		\sup_{B_{r(N)/3}(0)}\av{u}\leq C_1 \sup_{B_{r(N)/2}(0)}u,\quad \sup_{B_{r(N)/3}(0)}\av{u}\leq C_1 \sup_{B_{r(N)/2}(0)}(-u),
	\end{eqnarray}
	where $C_1\geq 1$ is a universal constant. Let $u(z_1):=\sup_{B_{r(N)/2}(0)}u$, where $z_1\in\partial B_{r(N)/2}(0)$. Combining (\ref{e52}) and (\ref{e53}), we have
	\begin{eqnarray*}
		\sup_{B_{r(N)/6}(z_1)}u\leq\sup_{B_{2r(N)/3}(0)}u\leq 2^{3C}C_1 u(z_1).
	\end{eqnarray*}
	By the standard gradient estimates and Harnack inequality again, we have
	\begin{flalign}\label{e54}
		&&\sup_{B_{r(N)/18}(z_1)}\av{\nabla u}\leq\frac{C_2}{r(N)}\sup_{B_{r(N)/9}(z_1)}\av{u}\leq \frac{C_1C_2}{r(N)}\sup_{B_{r(N)/6}(z_1)}u\leq \frac{2^{3C}C_1^2C_2}{r(N)} u(z_1).
	\end{flalign}
	Thus, if we choose a small universal constant $c=\frac{1}{18\cdot 2^{3C}C_1^2C_2}$, then $u$ keeps positive in $B_{cr(N)}(z_1)$. 
	
	Similarly, letting $u(z_2):=\sup_{B_{r(N)/2}(0)}(-u)$, where $z_2\in\partial B_{r(N)/2}(0)$, we can also derive the inequality (\ref{e54}) for $-u$ in $B_{r(N)/18}(z_2)$. Therefore, we get that $u$ keeps negative in $B_{cr(N)}(z_2)$. Hence,
	\begin{eqnarray*}
		\av{\{u>0\}\cap B_{r(N)}(0)}\geq \av{B_{cr(N)}(z_1)}= \pi c^2r(N)^2, \\
		\av{\{u<0\}\cap B_{r(N)}(0)}\geq \av{B_{cr(N)}(z_2)}= \pi c^2r(N)^2,
	\end{eqnarray*}
	which completes the proof after setting $r_0:=r(N), c_0:=c^2/(1-c^2)$.
	\hfill$\square$\par
	
	\begin{remark}
		In fact, the result in Theorem \ref{theorem3} implies that there is some constant $c_1(N)$, such that
		\begin{eqnarray*}
			c_1(N)\leq\frac{\av{\{u>0\}\cap B_{1}(0)}}{\av{\{u<0\}\cap B_{1}(0)}}\leq c_1(N)^{-1}.
		\end{eqnarray*}
		Nevertheless, we can not find a uniform bound for the above inequality. Actually, there exists an entire function $f$, which is given in \cite{hayman1964meromorphic}, such that $\av{f}<1$ in $\mathbb{C}\backslash D$, where $D$ is a half-strip $\{x+iy\in \mathbb{C}|x>0,\av{y}<\pi\}$. Since $f$ is an entire function, there is some point $z_0\in D$ such that $\RRe f(z_0)>1$. Therefore, the level set of $u(z)=\RRe f$ at $z_0$ is given by $\{z\in\mathbb{C}|u(z)=u(z_0)\}\subset D$. And for $z\notin D$, $u(z)<u(z_0)$. Thus, for $R\rightarrow+\infty$, 
		\begin{eqnarray*}
			\frac{\av{\{u>u(z_0)\}\cap B_{R}(z_0)}}{\av{\{u<u(z_0)\}\cap B_{R}(z_0)}}\rightarrow 0,
		\end{eqnarray*}
		which means $c_1$ can not be universal.
	\end{remark}
	
	\section{Applications of M\"{o}bius transformation}
	In fact, the point where nodal curves intersect is not crucial, since for any point $p\in B_1(0)$, there is a M\"{o}bius transformation $\psi:B_1(0)\rightarrow B_1(0)$ such that $\psi(p)=0$ and $\psi$ maintains the unit disk. Thus, we have the following theorem, which is a generalization of Theorem \ref{Main_theorem}.
	\begin{theorem}\label{t14}
		Let $u:B_{r_0}(0)\subseteq \R^2 \rightarrow \R$ be a non-constant harmonic function with boundary value $u|_{S^1_{r_0}}\in L^1(S^1_{r_0})$, such that $u(p)=0$, where $p\in B_{r_0}(0)$. Assume $u$ has only $n$ nodal curves that intersect at $p$ in $B_{r_0}(0)$.
		Then for every nodal curve $\gamma_k\subset Z(u)$,
		\begin{eqnarray}\label{mobiusestimate}
			\bigg|\kappa(u)|_{\gamma_k}(p)\bigg|\leq \frac{4(n+1)}{(r_0^2-\av{p}^2)n}r_0\cos n\alpha_0+\frac{2\av{p}}{r_0^2-\av{p}^2},
		\end{eqnarray}
		where $\alpha_0=0$ for odd $n$ and $\alpha_0=\frac{\pi}{2n(n+1)}$ for even $n$. 
		Moreover, this equality is sharp for all $n\geq 1$.
	\end{theorem}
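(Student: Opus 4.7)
The plan is to conjugate by a Möbius automorphism of the disc and apply Theorem~\ref{Main_theorem}. After rescaling $z \mapsto r_0 z$ we may assume $r_0 = 1$. Set
$$\psi(z) = \frac{z-p}{1-\bar p z},$$
a biholomorphism of $B_1(0)$ with $\psi(p)=0$ and $\psi(S^1)=S^1$, and let $v := u \circ \psi^{-1}$. Then $v$ is harmonic on $B_1(0)$, $v(0)=0$, $v$ has exactly $n$ nodal curves through $0$, and $v|_{S^1}\in L^1(S^1)$ because $\psi|_{S^1}$ is a smooth diffeomorphism with bounded Jacobian. Theorem~\ref{Main_theorem} applied to $v$ gives, for each nodal curve $\tilde\gamma_k = \psi(\gamma_k)$ of $v$ at $0$,
$$\bigl|\kappa(v)|_{\tilde\gamma_k}(0)\bigr| \le \frac{4(n+1)}{n}\cos n\alpha_0.$$

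The next step is to transport this bound back to $p$ via the curvature transformation law for a conformal map $\phi$: if $\gamma$ is parametrized by arc length with tangent $T(s)=\gamma'(s)$ and signed curvature $\kappa_\gamma$, a direct computation from $\gamma''(s) = i\kappa_\gamma T$ gives
$$|\phi'(\gamma(s))|\,\kappa_{\phi\circ\gamma} = \kappa_\gamma + \mathrm{Im}\!\left(\frac{\phi''(\gamma(s))}{\phi'(\gamma(s))}\,T(s)\right).$$
Applying this with $\phi = \psi^{-1}$ at $s=0$ and computing $(\psi^{-1})'(0) = 1 - |p|^2$ together with $(\psi^{-1})''(0)/(\psi^{-1})'(0) = -2\bar p$, one obtains
$$(1-|p|^2)\,\kappa(u)|_{\gamma_k}(p) = \kappa(v)|_{\tilde\gamma_k}(0) + \mathrm{Im}(-2\bar p\,T(0)).$$
Since $|T(0)|=1$, the triangle inequality yields $(1-|p|^2)\bigl|\kappa(u)|_{\gamma_k}(p)\bigr| \le \frac{4(n+1)}{n}\cos n\alpha_0 + 2|p|$. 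Undoing the rescaling (which scales curvatures by $1/r_0$ and $|p|$ by $1/r_0$) produces exactly~\eqref{mobiusestimate}.

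For sharpness I would run the construction in reverse: pick an extremer $\tilde w$ from Theorem~\ref{extremer_case} on $B_1(0)$, possibly after a preliminary rotation $z \mapsto e^{i\beta}z$, and set $w = \tilde w \circ \psi$. The resulting $u = \RRe w$ has a nodal curve at $p$, and to attain equality in~\eqref{mobiusestimate} one needs both the Theorem~\ref{Main_theorem} bound to be saturated by the corresponding curve of $\tilde w$ at $0$ and the tangent $T(0)$ there to satisfy $\mathrm{Im}(-2\bar p\,T(0)) = \pm 2|p|$, with a sign matching that of $\kappa(\tilde w)|_{\tilde\gamma_k}(0)$. The freedom in $\varphi_0$ and the choice of which of the $n$ extremal nodal curves of $\tilde w$ to select should provide enough flexibility; the main obstacle is exactly this coordination — among the discrete family of extremers listed in Theorem~\ref{extremer_case}, verifying that at least one together with a choice of nodal curve and tangent simultaneously saturates the curvature bound for $v$ and aligns $T(0)$ perpendicular to $\overline{0p}$ with the correct sign.
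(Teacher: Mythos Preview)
Your derivation of the inequality is correct and follows the same overall strategy as the paper --- conjugate by a disc automorphism and invoke Theorem~\ref{Main_theorem} --- but you carry out the curvature transfer differently. The paper computes $w^{(n)}(p)$ and $w^{(n+1)}(p)$ from $w_1^{(n)}(0)$, $w_1^{(n+1)}(0)$ via the higher chain rule and then plugs into formula~(\ref{000}); you instead invoke the general conformal change-of-curvature law for curves, which is more geometric and avoids any mention of the order-$n$ derivatives. Both routes yield exactly
\[
(1-|p|^2)\,\kappa(u)|_{\gamma_k}(p)=\kappa(v)|_{\tilde\gamma_k}(0)+\mathrm{Im}(-2\bar p\,T(0)),
\]
so your version is a legitimate shortcut. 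The rescaling step is also fine.

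For sharpness there are two issues worth flagging. First, the extremers of Theorem~\ref{extremer_case} have \emph{distributional} boundary data and therefore do not lie in the class $u|_{S^1}\in L^1$ required by Theorem~\ref{t14}; the paper instead appeals to Lemma~\ref{sharpness}, which supplies a sequence $u_m$ with $C^\infty$ boundary and curvatures approaching the bound, and you should do the same. Second, the alignment ``obstacle'' you identify is genuine but entirely resolvable by your own rotation freedom $z\mapsto e^{i\beta}z$: this rotation leaves $|\kappa(v)|_{\tilde\gamma_k}(0)|$ unchanged while rotating $T(0)$ to $e^{-i\beta}T(0)$, so $\beta$ can be chosen to make $\mathrm{Im}(-2\bar p\,T(0))=\pm 2|p|$ with sign matching that of $\kappa(v)$. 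The paper records this as the explicit choice~(\ref{equality}) of the rotation parameter $\theta$ in $\psi$; your preliminary rotation plays the identical role.
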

	\begin{proof}
		First, assume $r_0=1$, and $u=\RRe w$, such that $w(p)=w^\prime(p)=\cdots=w^{(n-1)}(p)=0$ and $w^{(n)}(p)\neq 0$. A M\"{o}bius transformation $\psi$, mapping $p$ to $0$, is defined as
		\begin{eqnarray}\label{representation}
			\psi(z)=\frac{z-p}{1-\overline{p}z}e^{i\theta},
		\end{eqnarray}
		where $\theta$ is a constant to be determined. Let $w_1(z):=w(\psi^{-1}(z))$ and $u_1(z):=\RRe w_1=u(\psi^{-1}(z))$. Since $\psi$ is a conformal map, $u_1$ also has only $n$ nodal curves in $B_1(0)$, intersecting at $0$. Moreover, if we define $\psi(e^{is}):=e^{it(s)}$, then 
		$$\derivative{e^{it(s)}}{s}=it^\prime(s)\psi(e^{is})=i\psi^\prime(e^{is})e^{is}=ie^{is}\frac{1-\av{p}^2}{(1-\overline{p}e^{is})^2}=i\psi(e^{is})\frac{1-\av{p}^2}{\av{1-\overline{p}e^{is}}^2}.$$
		Thus, $t^\prime(s)=(1-\av{p}^2)/\av{1-\overline{p}e^{is}}^2$, which indicates that $u_1|_{S^1}\in L^1(S^1)$ since $\av{p}<1$. Hence, $u_1$ satisfies the conditions of Theorem \ref{Main_theorem}, which gives
		\begin{eqnarray*}
			\bigg|\kappa(u_1)|_{\widetilde{\gamma}_k}(0)\bigg|\leq \frac{4(n+1)}{n}\cos n\alpha_0,
		\end{eqnarray*}
		where $\widetilde{\gamma}_k=\psi^{-1}(\gamma_k)$, and $\alpha_0=0$ for odd $n$ and $\alpha_0=\frac{\pi}{2n(n+1)}$ for even $n$. Now for $w(z)=w_1(\psi(z))$, we have
		\begin{eqnarray*}
			w^{\prime}(z) = w_1^{\prime}(\psi(z))\psi^{\prime}(z);\ 
			w^{\prime\prime}(z) = w_1^{\prime\prime}(\psi(z))\psi^{\prime}(z)^2+w_1^{\prime}(\psi(z))\psi^{\prime\prime}(z).
		\end{eqnarray*}
		By induction, we know that for any integer $l\geq 2$, $w^{(l)}(z)=w_1^{(l)}(\psi(z))\psi^{\prime}(z)^{l}+\left(\begin{matrix}
			l\\2
		\end{matrix}\right)w_1^{(l-1)}(\psi(z))\psi^{\prime}(z)^{l-2}\psi^{\prime\prime}(z)+\cdots$, where we omit terms with less than $(l-1)$th derivative. It follows from the condition $w(p)=w^\prime(p)=\cdots=w^{(n-1)}(p)=0$ that 
		\begin{eqnarray*}
			w^{(n)}(p)&=&w_1^{(n)}(0)\psi^{\prime}(p)^{n},\\ 
			w^{(n+1)}(p)&=&w_1^{(n+1)}(0)\psi^{\prime}(p)^{n+1}+\left(\begin{matrix}
				n+1\\2
			\end{matrix}\right)w_1^{(n)}(0)\psi^{\prime}(p)^{n-1}\psi^{\prime\prime}(p).
		\end{eqnarray*}
		Since $\psi^{\prime}(z)=e^{i\theta}(1-\av{p}^2)/(1-\overline{p}z)^2$, we obtain
		\begin{eqnarray*}
			\psi^{\prime}(p)=\frac{e^{i\theta}}{1-\av{p}^2},\ 
			\psi^{\prime\prime}(p)=\frac{2\overline{p}e^{i\theta}}{\left(1-\av{p}^2\right)^2}.
		\end{eqnarray*}
		
		After choosing $\theta=arg(w^{(n)}(p))/n$, we get $arg(w_1^{(n)}(0))=0$, that is, $w_1^{(n)}(0)>0$. Furthermore, $\theta+\eta_q=\frac{q\pi+\frac{1}{2}\pi}{n}$.
		
		Therefore, with equations (\ref{qqqqqq}) and (\ref{000}), we can calculate $k(u)(p)$ along $\gamma_k$ as
		\begin{eqnarray}\label{mobius}
			&&\bigg|\kappa(u)|_{{\gamma}_k}(p)\bigg|
			=\av{\frac{2}{n^2+n}\frac{\RRe\left(e^{i(n+1)\eta_q}w^{(n+1)}(p)\right)}{\av{w^{(n)}(p)}}}\nonumber\\
			&&= \av{\frac{2\RRe\left(e^{i(n+1)\eta_q}(w_1^{(n+1)}(0)(\frac{e^{i\theta}}{1-\av{p}^2})^{n+1}+\left(\begin{matrix}
						n+1\\2
					\end{matrix}\right)w_1^{(n)}(0)\frac{2\overline{p}e^{in\theta}}{\left(1-\av{p}^2\right)^{n+1}})\right)}{n(n+1)\av{w_1^{(n)}(0)(\frac{e^{i\theta}}{1-\av{p}^2})^{n}}}}\nonumber\\
			&&=\av{\frac{2\RRe\left(e^{i(n+1)\eta_q}(w_1^{(n+1)}(0)e^{i(n+1)\theta}+n(n+1)w_1^{(n)}(0)\overline{p}e^{in\theta})\right)}{n(n+1)\left(1-\av{p}^2\right)\av{w_1^{(n)}(0)}}}\nonumber\\
			&&=\av{\frac{2\RRe\left(e^{i(n+1)(\theta+\eta_q)}w_1^{(n+1)}(0)\right)}{n(n+1)\left(1-\av{p}^2\right)\av{w_1^{(n)}(0)}}+\frac{2\RRe\left(\overline{p}e^{i((n+1)\eta_q+n\theta)}\right)}{1-\av{p}^2}}\nonumber\\
			&&= \av{\frac{2\RRe\left(e^{i(n+1)\frac{q\pi+\frac{1}{2}\pi}{n}}w_1^{(n+1)}(0)\right)}{n(n+1)\left(1-\av{p}^2\right)\av{w_1^{(n)}(0)}}+\frac{2\RRe\left(\overline{p}e^{i\frac{(n+1)(q\pi+\frac{1}{2}\pi)-arg(w^{(n)}(0))}{n}}\right)}{1-\av{p}^2}}\nonumber\\
			&&= \av{\frac{-\kappa(u_1)|_{\widetilde{\gamma}_k}(0)}{1-\av{p}^2}+\frac{2\RRe\left(\overline{p}e^{i\frac{(n+1)(q\pi+\frac{1}{2}\pi)-arg(w^{(n)}(0))}{n}}\right)}{1-\av{p}^2}}\nonumber\\
			&&\leq 
			\av{\frac{\kappa(u_1)|_{\widetilde{\gamma}_k}(0)}{1-\av{p}^2}}+\frac{2\av{p}}{1-\av{p}^2}
			\leq\frac{4(n+1)}{(1-\av{p}^2)n}\cos n\alpha_0+\frac{2\av{p}}{1-\av{p}^2}.
		\end{eqnarray}
		
		For general $r_0>0$, let $\widetilde{u}(z):=u(r_0z)$ and $\widetilde{w}(z):=w(r_0z)$, which are defined in $B_1(0)$. Then $\widetilde{u}(z)$ satisfies the inequality (\ref{mobius}). Thus, we have
		\begin{eqnarray*}
			\bigg|\kappa(u)|_{{\gamma}_k}(p)\bigg| &=& \av{\frac{2}{n^2+n}\frac{\RRe\left(e^{i(n+1)\eta_q}w^{(n+1)}(p)\right)}{\av{w^{(n)}(p)}}}\\
			&=& \av{\frac{2}{n^2+n}\frac{\RRe\left(e^{i(n+1)\eta_q}\widetilde{w}^{(n+1)}(\frac{p}{r_0})\right)}{r_0\av{\widetilde{w}^{(n)}(\frac{p}{r_0})}}}\\
			&\leq&\frac{4(n+1)}{(r_0^2-\av{p}^2)n}r_0\cos n\alpha_0+\frac{2\av{p}}{r_0^2-\av{p}^2},
		\end{eqnarray*}
		which gives the desired estimate. 
		
		Moreover, according to Lemma \ref{sharpness}, the last inequality in (\ref{mobius}) is sharp. 
		Assume for some integer $j$, $(-1)^{j-1}\kappa(u_1)|_{\widetilde{\gamma}_k}(0)\geq 0$. Then the equality holds in the first inequality in (\ref{mobius}) if and only if 
		\begin{eqnarray*}
			-arg(p)+\frac{(n+1)(q\pi+\frac{1}{2}\pi)-arg(w^{(n)}(p))}{n} = j\pi,
		\end{eqnarray*}
		where $arg(p)$ denotes the argument of $p$. Recall that $\theta=arg(w^{(n)}(p))/n$. Hence, to let the equality hold, we only need to choose 
		\begin{eqnarray}\label{equality}
			\theta = -j\pi -arg(p)+\frac{(n+1)(q\pi+\frac{1}{2}\pi)}{n} ,
		\end{eqnarray}
		in equation (\ref{representation}) to construct some appropriate function $w(z)$. In conclusion, the estimate (\ref{mobiusestimate}) is sharp for all $n\geq 1$.		
	\end{proof}
	\begin{remark}
		In fact, the equality holds in the last inequality in (\ref{mobius}) if and only if $w_1$ is given by (\ref{extremers}). Therefore, the extremer of Theorem \ref{t14}, when the equality holds in (\ref{mobiusestimate}), is given by
		\begin{eqnarray*}
			u(z) = \RRe w(\frac{z-p}{1-\overline{p}z}e^{i\theta}),
		\end{eqnarray*}
		where $w$ is given by (\ref{extremers}) and $\theta$ is given by (\ref{equality}).
	\end{remark}
	
	Similarly, through a M\"{o}bius transformation $\psi$, we can establish a connection between $u(p)=0$ and $u_1(0)=0$ by $u(z)=u_1(\psi(z))$. As a result of inequality (\ref{mobius}), we can generalize Theorem \ref{theorem2} as follows, where we omit the proof.
	\begin{corollary}\label{C1}
		Let $u:B_{r_0}(0)\subseteq \R^2 \rightarrow \R$ be a non-constant harmonic function with the boundary value $u|_{S^1_{r_0}}\in L^1(S^1_{r_0})$, such that $u(p)=0$, where $p\in B_{r_0}(0)$. Assume $u$ has only $n=n(u,p)$ nodal curves intersecting at $p$ in $B_{r_0}(0)$.
		Then there is a constant $c(n)<1$ and $C(n)$, depending on $n$, such that
		\begin{eqnarray*}
			\bigg|\kappa(u)(z)\bigg|\leq \frac{C(n)r_0}{(r_0^2-\av{p}^2)}, \qquad \forall z\in (Z(u)\backslash\{p\})\cap \psi^{-1}(B_{c(n)}(0)),
		\end{eqnarray*}
		where $\psi$ is given by (\ref{representation}).
	\end{corollary}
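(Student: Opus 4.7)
The plan is to reduce to Theorem \ref{theorem2} via the M\"{o}bius transformation used in Theorem \ref{t14}. After the dilation $\widetilde{u}(\zeta) := u(r_0\zeta)$ it suffices to treat the case $r_0=1$. I set $u_1 := u \circ \psi^{-1}$ with $\psi$ the unit-disk automorphism from (\ref{representation}) sending $p$ to $0$. The proof of Theorem \ref{t14} already verifies that $u_1$ is a non-constant harmonic function on $B_1(0)$ with $u_1(0)=0$, boundary value in $L^1(S^1)$, and exactly $n$ nodal curves meeting at $0$. Theorem \ref{theorem2} therefore furnishes constants $c(n)<1$ and $\widetilde{C}(n)$ with
\[
\bigl|\kappa(u_1)(w)\bigr| \leq \widetilde{C}(n), \qquad w \in (Z(u_1)\setminus\{0\})\cap B_{c(n)}(0).
\]

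Next I transport this estimate back to $u$. Every $z \in (Z(u)\setminus\{p\})\cap \psi^{-1}(B_{c(n)}(0))$ has vanishing order one, so $w'(z)\neq 0$ and formula (\ref{E3.3}) applies. Writing $w=w_1\circ\psi$ the chain rule yields $w'(z)=w_1'(\psi(z))\psi'(z)$ and $w''(z)=w_1''(\psi(z))\psi'(z)^2 + w_1'(\psi(z))\psi''(z)$. Substituting into $\kappa(u)(z) = \RRe\!\bigl(w''(z)\overline{w'(z)}^2\bigr)/|w'(z)|^3$ and grouping the terms produces a conformal transformation rule of the shape
\[
\kappa(u)(z) = |\psi'(z)|\,\kappa(u_1)(\psi(z)) + \RRe\!\left(\frac{\overline{w'(z)}}{|w'(z)|}\cdot\frac{\psi''(z)}{\psi'(z)}\right),
\]
whose remainder is bounded in modulus by $|\psi''(z)/\psi'(z)|$, independently of $w_1'(\psi(z))$.

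For the final step I insert the explicit M\"{o}bius identities $\psi'(z)=e^{i\theta}(1-|p|^2)/(1-\overline{p}z)^2$ and $\psi''(z)/\psi'(z)=2\overline{p}/(1-\overline{p}z)$. For $|z|<1$ the estimate $|1-\overline{p}z|\geq 1-|p|$ gives the uniform bounds
\[
|\psi'(z)|\leq \frac{1+|p|}{1-|p|}\leq \frac{2}{1-|p|^2}, \qquad \left|\frac{\psi''(z)}{\psi'(z)}\right|\leq \frac{2|p|}{1-|p|}\leq \frac{2}{1-|p|^2},
\]
so combining with the bound on $\kappa(u_1)$ yields $|\kappa(u)(z)|\leq (2\widetilde{C}(n)+2)/(1-|p|^2)$. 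Undoing the initial dilation via the scaling identity $\kappa(u)(z) = \kappa(\widetilde{u})(z/r_0)/r_0$ delivers the asserted estimate $|\kappa(u)(z)|\leq C(n)r_0/(r_0^2-|p|^2)$.

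The main obstacle I expect is the algebraic derivation of the conformal transformation rule for $\kappa$: one has to verify that after cancellation the unknown factor $w_1'(\psi(z))$ disappears from the remainder, leaving only $|\psi''/\psi'|$ times a unit complex number. Once this identity is in hand everything else is elementary, and the computation closely mirrors the calculation at $z=p$ carried out in (\ref{mobius}) during the proof of Theorem \ref{t14}.
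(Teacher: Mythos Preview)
Your approach is exactly what the paper intends: it explicitly says the corollary follows ``as a result of inequality (\ref{mobius})'' via the M\"{o}bius transformation and Theorem \ref{theorem2}, and then omits the proof. Your derivation of the pointwise conformal change rule
\[
\kappa(u)(z) = |\psi'(z)|\,\kappa(u_1)(\psi(z)) + \RRe\!\left(\frac{\overline{w'(z)}}{|w'(z)|}\cdot\frac{\psi''(z)}{\psi'(z)}\right)
\]
is correct and is precisely the analogue at a generic nodal point of the computation (\ref{mobius}) carried out at $p$ in Theorem \ref{t14}.

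One small arithmetic slip: the inequalities $(1+|p|)/(1-|p|)\leq 2/(1-|p|^2)$ and $2|p|/(1-|p|)\leq 2/(1-|p|^2)$ fail once $|p|$ is moderately large (the first is equivalent to $(1+|p|)^2\leq 2$). The correct elementary bounds are
\[
\frac{1+|p|}{1-|p|}=\frac{(1+|p|)^2}{1-|p|^2}\leq \frac{4}{1-|p|^2},\qquad \frac{2|p|}{1-|p|}=\frac{2|p|(1+|p|)}{1-|p|^2}\leq \frac{4}{1-|p|^2},
\]
so your final constant should be $4\widetilde{C}(n)+4$ rather than $2\widetilde{C}(n)+2$. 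Since the statement only asserts the existence of some $C(n)$, this does not affect the conclusion.
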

	\begin{remark}
		Note that for $z\in B_{1}(0)$, 
		\begin{eqnarray*}
			\av{z}(1-\av{p})\leq \av{\psi^{-1}(z)-p}=\av{ze^{-i\theta}\frac{1-\av{p}^2}{1+\overline{p}ze^{-i\theta}}}\leq \av{z}(1+\av{p}),
		\end{eqnarray*}
		which means $B_{(1-\av{p})r_0}(p)\subset\psi^{-1}(B_{r_0}(0))$. Thus, the conclusions of Corollary \ref{C1} holds in a small disk with center $p$.
	\end{remark}
	
	Besides, any M\"{o}bius transformation $\psi$ is a conformal map, which implies that after a transformation $u(z)=u_1(\psi(z))$, the number of nodal domains of $u$ is the same as that of $u_1$. Therefore, we can generalize Theorem \ref{theorem3} as follows.
	\begin{corollary}\label{C2}
		Let $u:B_1(0)\subseteq \R^2 \rightarrow \R$ be a non-constant harmonic function satisfying $u(p)=0$ with $u|_{S^1}\in L^1(S^1)$, and $u$ has $N$ nodal domains in $B_1(0)$, then 
		\begin{eqnarray*}
			c_0(p)\leq\frac{\av{\{u>0\}\cap \psi^{-1}(B_{r_0}(0))}}{\av{\{u<0\}\cap \psi^{-1}(B_{r_0}(0))}}\leq c_0(p)^{-1},
		\end{eqnarray*}
		where $c_0(p)$ is a constant depending on $p$, $r_0=r_0(N)<1/2$ depends only on $N$, and $\psi$ is given by (\ref{representation}).
	\end{corollary}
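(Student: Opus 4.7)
The plan is to reduce Corollary \ref{C2} to Theorem \ref{theorem3} by pulling back $u$ through the M\"obius transformation $\psi$ and then comparing areas via the change-of-variables formula. Define $u_1(z):=u(\psi^{-1}(z))$ on $B_1(0)$. Since $\psi$ maps $B_1(0)$ biholomorphically onto itself and sends $p$ to $0$, the function $u_1$ is harmonic in $B_1(0)$ and satisfies $u_1(0)=0$. By the same computation as in the proof of Theorem \ref{t14} (where one observes that $t'(s)=(1-|p|^2)/|1-\overline p e^{is}|^2$ is a bounded density on $S^1$ since $|p|<1$), the boundary value $u_1|_{S^1}$ lies in $L^1(S^1)$. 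Because $\psi$ is a conformal bijection on $B_1(0)$, it restricts to a homeomorphism of nodal domains, so $u_1$ has exactly $N$ nodal domains in $B_1(0)$.

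Now apply Theorem \ref{theorem3} to $u_1$: there exist a universal constant $c_0$ and a radius $r_0=r_0(N)<1/2$ such that
\begin{eqnarray*}
c_0\;\leq\;\frac{\big|\{u_1>0\}\cap B_{r_0}(0)\big|}{\big|\{u_1<0\}\cap B_{r_0}(0)\big|}\;\leq\;c_0^{-1}.
\end{eqnarray*}
Since $\{u>0\}\cap\psi^{-1}(B_{r_0}(0))=\psi^{-1}(\{u_1>0\}\cap B_{r_0}(0))$, and similarly for the negative part, the change-of-variables formula gives
\begin{eqnarray*}
\big|\{u>0\}\cap\psi^{-1}(B_{r_0}(0))\big|=\int_{\{u_1>0\}\cap B_{r_0}(0)}\big|(\psi^{-1})'(z)\big|^2\,dz,
\end{eqnarray*}
and analogously for $\{u<0\}$. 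From the explicit formula $\psi^{-1}(z)=(ze^{-i\theta}+p)/(1+\overline p ze^{-i\theta})$, one computes $|(\psi^{-1})'(z)|=(1-|p|^2)/|1+\overline p z e^{-i\theta}|^2$, so for $z\in B_{r_0}(0)$,
\begin{eqnarray*}
\frac{1-|p|^2}{(1+|p|r_0)^2}\;\leq\;|(\psi^{-1})'(z)|\;\leq\;\frac{1-|p|^2}{(1-|p|r_0)^2}.
\end{eqnarray*}
Denote this ratio of upper to lower bound by $K(p):=((1+|p|r_0)/(1-|p|r_0))^4$, which is a finite constant since $|p|r_0<1/2$.

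Combining the last three displays yields
\begin{eqnarray*}
\frac{\big|\{u>0\}\cap\psi^{-1}(B_{r_0}(0))\big|}{\big|\{u<0\}\cap\psi^{-1}(B_{r_0}(0))\big|}\;\leq\;K(p)\cdot\frac{\big|\{u_1>0\}\cap B_{r_0}(0)\big|}{\big|\{u_1<0\}\cap B_{r_0}(0)\big|}\;\leq\;K(p)c_0^{-1},
\end{eqnarray*}
and symmetrically a lower bound of $K(p)^{-1}c_0$. Setting $c_0(p):=c_0/K(p)$ gives the conclusion. The only nontrivial issues are verifying that $u_1|_{S^1}\in L^1(S^1)$ and that nodal domains are preserved under $\psi$; both are minor and already implicit in the proof of Theorem \ref{t14}. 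No step should present a real obstacle.
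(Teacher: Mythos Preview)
Your argument is correct, and it is slightly different from the paper's in a way worth noting. The paper does not apply the \emph{statement} of Theorem \ref{theorem3} to $u_1$; instead it goes back into the proof of Theorem \ref{theorem3} and extracts the specific balls $B_{cr(N)}(z_1)\subset\{u_1>0\}$ and $B_{cr(N)}(z_2)\subset\{u_1<0\}$, then bounds the area of $\psi^{-1}$ of each ball from below using the global Jacobian bound $|(\psi^{-1})'(z)|\ge (1-|p|)/(1+|p|)$ on all of $B_1(0)$. This yields explicit lower bounds on both $|\{u>0\}\cap\psi^{-1}(B_{r_0}(0))|$ and $|\{u<0\}\cap\psi^{-1}(B_{r_0}(0))|$ separately, from which the ratio bound follows.

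Your route is more modular: you treat Theorem \ref{theorem3} as a black box, then transfer the ratio bound directly via the two-sided Jacobian distortion factor $K(p)$ on $B_{r_0}(0)$. This avoids reopening the proof of Theorem \ref{theorem3} and gives a cleaner derivation. The paper's route, in exchange, produces the slightly stronger information that each of the positive and negative parts has area bounded below by an explicit constant depending on $p$ and $N$, not merely that their ratio is controlled. Either approach is fine for the stated corollary.
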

	\begin{proof}
		Note that for $u_1(z)=u(\psi^{-1}(z))$, $u_1$ satisfies the condition in Theorem \ref{theorem3}. Hence, according to the proof of Theorem \ref{theorem3}, $u_1(z)$ keeps positive in $B_{cr(N)}(z_1)$ and negative in $B_{cr(N)}(z_2)$, where $z_1, z_2\in \partial B_{r(N)/2}(0)$. Thus, $u(z)$ keeps positive in $\psi^{-1}(B_{cr(N)}(z_1))$ and negative in $\psi^{-1}(B_{cr(N)}(z_2))$. Note that for all $z\in B_1(0)$,
		\begin{eqnarray*}
			\av{(\psi^{-1})^{\prime}(z)}=\av{e^{-i\theta}\frac{1-\av{p}^2}{(1+\overline{p}ze^{-i\theta})^2}}\geq \frac{1-\av{p}}{1+\av{p}},
		\end{eqnarray*}
		which gives 
		\begin{eqnarray*}
			\av{\{u>0\}\cap \psi^{-1}(B_{r(N)}(0))}&\geq& \av{\psi^{-1}(B_{cr(N)}(z_1))}\\
			&=&\int_{B_{cr(N)}(z_1)}\av{(\psi^{-1})^{\prime}(z)}^2\\
			&\geq&\pi c^2r(N)^2\left(\frac{1-\av{p}}{1+\av{p}}\right)^2.
		\end{eqnarray*}
		The estimate for $\{u<0\}\cap \psi^{-1}(B_{r(N)}(0))$ is just the same. 
		The conclusion is established after setting 
		$$r_0:=r(N), c_0(p):=\frac{c^2\left({1-\av{p}}\right)^2}{\left({1+\av{p}}\right)^2-c^2\left({1-\av{p}}\right)^2}.$$
	\end{proof}

	\section{Auxiliary lemmas}
	\begin{lemma}\label{auxilary}
		The linear equations (\ref{inverse}) is solvable.
	\end{lemma}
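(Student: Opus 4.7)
The plan is to show the coefficient matrix $M$ of (\ref{inverse}) is invertible, which gives solvability for any right-hand side, in particular the one in the lemma. Writing $\gamma(\theta)=(\theta,\sin\theta,\cos\theta,\sin 2\theta,\cos 2\theta,\ldots,\sin n\theta)^T$, the $j$-th column of $M$ is $\gamma(\beta_{j+1})-\gamma(\beta_j)$. First I would perform a cumulative column operation replacing column $k$ by $\gamma(\beta_{k+1})-\gamma(\beta_1)$; because only the linear component of $\gamma$ registers the $2\pi$-shift $\beta_{2n+1}=\beta_1+2\pi$, the last column becomes $(2\pi,0,\ldots,0)^T$. Expanding along this column and then Laplace-expanding the resulting $(2n-1)\times(2n-1)$ minor along a reinstated row of $1$'s yields
\begin{equation*}
	\det M=\pm\,2\pi\,\det G,
\end{equation*}
where $G$ is the $2n\times 2n$ matrix whose rows $\{1,\sin\theta,\cos\theta,\sin 2\theta,\cos 2\theta,\ldots,\sin(n-1)\theta,\cos(n-1)\theta,\sin n\theta\}$ are evaluated at $\beta_1,\ldots,\beta_{2n}$. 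It therefore suffices to show $\det G\neq 0$.

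The key step is to embed $G$ into a classical $(2n+1)\times(2n+1)$ trigonometric Vandermonde. Introduce a parameter $t$ and let $V(t)$ be the matrix whose rows run over the full basis $\{1,\sin\theta,\cos\theta,\ldots,\sin n\theta,\cos n\theta\}$ evaluated at $\beta_1,\ldots,\beta_{2n},t$. Then $\det V(t)$ is a trigonometric polynomial of degree at most $n$ in $t$ that vanishes at each $t=\beta_j$ (two equal columns), so it must factor as
\begin{equation*}
	\det V(t)=K\prod_{j=1}^{2n}\sin\!\left(\frac{t-\beta_j}{2}\right)
\end{equation*}
for a constant $K=K(\beta_1,\ldots,\beta_{2n})$ that is nonzero because the $\beta_j$'s are distinct modulo $2\pi$ (ensuring $\det V(t_0)\neq 0$ at any $t_0$ outside $\{\beta_j\}$ by the standard trigonometric Vandermonde nondegeneracy, since $(2n-1)\varepsilon_0<2\pi$). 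Expanding $\det V(t)$ along the $t$-column, only the row $\cos n\theta$ contributes to the coefficient of $\cos nt$, producing that coefficient as $\det G$. Expanding the product side in complex exponentials, the extreme terms $e^{\pm int}$ combine to give the coefficient of $\cos nt$ as $K\cdot(-1)^n\,2^{-(2n-1)}\cos(\tfrac{1}{2}\sum_{j=1}^{2n}\beta_j)$.

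Equating the two expressions identifies $\det G$, up to a nonzero constant, with $\cos(\tfrac12\sum_j\beta_j)$. A direct computation using $\beta_j=\alpha_0+(j-n-\tfrac{1}{2})\varepsilon_0$ gives $\sum_{j=1}^{2n}\beta_j=2n\alpha_0$, so the relevant quantity is $\cos(n\alpha_0)$. For odd $n$, $\alpha_0=0$ and $\cos 0=1$; for even $n$, $\alpha_0=\pi/(2n(n+1))$ and $\cos(n\alpha_0)=\cos(\pi/(2(n+1)))>0$. In both cases $\cos(n\alpha_0)\neq 0$, so $\det G\neq 0$ and hence $\det M\neq 0$, proving solvability. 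The main obstacle I expect is careful bookkeeping of the normalizing constant $K$ and the Laplace-expansion signs when isolating the coefficient of $\cos nt$; the underlying structural fact---that $\det V(t)$, being a degree-$n$ trigonometric polynomial in $t$ with $2n$ prescribed zeros, is determined up to a nonzero scalar by the product formula---makes the identification of $\det G$ with $\cos(n\alpha_0)$ essentially forced.
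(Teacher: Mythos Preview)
Your proof is correct and takes a genuinely different route from the paper's. Both begin the same way---summing the columns to produce $(2\pi,0,\ldots,0)^T$ in the last slot and reducing to a smaller minor---but diverge from there. The paper exploits the equispacing $\beta_{j+1}-\beta_j=\varepsilon_0$ to apply product-to-sum identities, turning the differences $\sin k\beta_{j+1}-\sin k\beta_j$ into $2\cos(k\alpha_{j+1})\sin(k\varepsilon_0/2)$ (and similarly for the cosines); after factoring out the $\sin(k\varepsilon_0/2)$'s and passing to complex exponentials it recognises a ``skipped-row'' Vandermonde whose determinant is $\sigma_n(e^{i\alpha_2},\ldots,e^{i\alpha_{2n}})$ times the ordinary Vandermonde product, and finally Taylor-expands in $\varepsilon_0\to 0$ to see that the leading term is a nonzero multiple of $\cos(n\alpha_0)$. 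Your argument is more structural: you embed the minor $G$ as the cofactor of $\cos nt$ in the full $(2n{+}1)\times(2n{+}1)$ trigonometric Vandermonde $V(t)$, invoke the classical factorisation $\det V(t)=K\prod_j\sin\!\big((t-\beta_j)/2\big)$ with $K\neq 0$, and read off the $\cos nt$-coefficient on the product side as a nonzero multiple of $\cos\!\big(\tfrac12\sum_j\beta_j\big)=\cos(n\alpha_0)$. This gives the exact value of $\det G$ (up to an explicit nonzero factor) without any asymptotic step, and in fact works whenever the $\beta_j$ are distinct modulo $2\pi$, not just for small $\varepsilon_0$; the price is that you import the trigonometric Vandermonde formula as a black box, whereas the paper's calculation is self-contained. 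It is worth noting that both arguments land on the same decisive quantity $\cos(n\alpha_0)$, which is no accident given the role $\alpha_0$ plays in the extremal construction.
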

	\begin{proof}
		It is sufficient to show that
		\begin{eqnarray}\label{nonzero}
			\left|\begin{matrix}
				\beta_2-\beta_1  &\cdots & \beta_{2n+1}-\beta_{2n}\\
				\sin\beta_2-\sin\beta_1  & \cdots & \sin\beta_{2n+1}-\sin\beta_{2n}\\
				\cos\beta_2-\cos\beta_1  & \cdots & \cos\beta_{2n+1}-\cos\beta_{2n}\\
				\vdots  & \vdots & \vdots \\
				\sin n\beta_2-\sin n\beta_1  & \cdots & \sin n\beta_{2n+1}-\sin n\beta_{2n}
			\end{matrix}\right|\neq 0.
		\end{eqnarray} 
		Adding all other columns to the last column and noticing $\beta_{2n+1}=\beta_1+2\pi$, the last column becomes $(2\pi,0,\ldots,0)^\prime$. Then it is sufficient to show that
		\begin{eqnarray*}
			\left|\begin{matrix}
				\sin\beta_2-\sin\beta_1  & \cdots & \sin\beta_{2n}-\sin\beta_{2n-1}\\
				\cos\beta_2-\cos\beta_1  & \cdots & \cos\beta_{2n}-\cos\beta_{2n-1}\\
				\vdots  & \vdots & \vdots \\
				\sin n\beta_2-\sin n\beta_1  & \cdots & \sin n\beta_{2n}-\sin n\beta_{2n-1}
			\end{matrix}\right|\neq 0.
		\end{eqnarray*} 
		Recall $\beta_j:=(\alpha_j+\alpha_{j+1})/2$ and  $\alpha_{j}=\alpha_0+(j-n-1)\varepsilon_0$. Then we can get that $\sin k\beta_{j}-\sin k\beta_{j-1}=2\cos (k(\beta_{j}+\beta_{j-1})/2)\sin(k(\beta_{j}-\beta_{j-1})/2)=2\cos (k\alpha_j)\sin(k\varepsilon_0/2)$. Similarly,  $\cos k\beta_{j}-\cos k\beta_{j-1}=-2\sin (k\alpha_j)\sin(k\varepsilon_0/2)$. Therefore, after multiplying a non-zero constant, (\ref{nonzero}) is equivalent to
		\begin{eqnarray*}
		 	\left|\begin{matrix}
		 		\cos \alpha_2  & \cdots & \cos\alpha_{2n}\\
		 		\sin\alpha_2  & \cdots & \sin\alpha_{2n}\\
		 		\vdots  & \vdots & \vdots \\
		 		\cos n\alpha_2  & \cdots & \cos n\alpha_{2n}
		 	\end{matrix}\right|=
		 	\left|\begin{matrix}
		 		\frac{e^{i\alpha_2}+e^{-i\alpha_2}}{2}   & \cdots & \frac{e^{i\alpha_{2n}}+e^{-i\alpha_{2n}}}{2}\\
		 		\frac{e^{i\alpha_2}-e^{-i\alpha_2}}{2i}   & \cdots & \frac{e^{i\alpha_{2n}}-e^{-i\alpha_{2n}}}{2i}\\
		 		\vdots  & \vdots & \vdots \\
		 		\frac{e^{in\alpha_2}+e^{-in\alpha_2}}{2}   & \cdots & \frac{e^{in\alpha_{2n}}+e^{-in\alpha_{2n}}}{2}
		 	\end{matrix}\right|\neq 0,
		\end{eqnarray*} 
		which is equivalent to say
		\begin{eqnarray*}
					0	&\neq&\RRe \left(i^{n-1}\left|\begin{matrix}
				e^{i\alpha_2}   & \cdots & e^{i\alpha_{2n}}\\
				e^{-i\alpha_2}   & \cdots & e^{-i\alpha_{2n}}\\
				\vdots  & \vdots & \vdots \\
				e^{in\alpha_2}   & \cdots & e^{in\alpha_{2n}}
			\end{matrix}\right|\right)\\
			&=&\RRe \left((-1)^{(n-1)^2}i^{n-1}\prod_{j=2}^{2n}e^{-i(n-1)\alpha_j}\left|\begin{matrix}
				1  & \cdots & 1\\
				e^{i\alpha_2}   & \cdots & e^{i\alpha_{2n}}\\
				\vdots  & \vdots & \vdots \\
				e^{i(n-2)\alpha_2}   & \cdots & e^{i(n-2)\alpha_{2n}}\\
				e^{in\alpha_2}   & \cdots & e^{in\alpha_{2n}}\\
				\vdots  & \vdots & \vdots \\
				e^{i(2n-1)\alpha_2}   & \cdots & e^{i(2n-1)\alpha_{2n}}
			\end{matrix}\right|\right).
		\end{eqnarray*} 
		By similar calculation with Vandermonde determinant, we have 
		\begin{eqnarray*}
			\left|\begin{matrix}
				1  & \cdots & 1\\
				e^{i\alpha_2}   & \cdots & e^{i\alpha_{2n}}\\
				\vdots  & \vdots & \vdots \\
				e^{i(n-2)\alpha_2}   & \cdots & e^{i(n-2)\alpha_{2n}}\\
				e^{in\alpha_2}   & \cdots & e^{in\alpha_{2n}}\\
				\vdots  & \vdots & \vdots \\
				e^{i(2n-1)\alpha_2}   & \cdots & e^{i(2n-1)\alpha_{2n}}
			\end{matrix}\right|=\sigma_{n}(e^{i\alpha_2},\ldots,e^{i\alpha_{2n}})\prod_{2\leq j_1< j_2\leq 2n}(e^{i\alpha_{j_2}}-e^{i\alpha_{j_1}}),
		\end{eqnarray*} 
		where $\sigma_{n}$ represents $n$-th elementary symmetric polynomial. Thus, by Taylor expansion, we obtain
		$$
		\begin{aligned}
			&\RRe \left(i^{n-1}\prod_{j=2}^{2n}e^{-i(n-1)\alpha_j}\left|\begin{matrix}
				1  & \cdots & 1\\
				e^{i\alpha_2}   & \cdots & e^{i\alpha_{2n}}\\
				\vdots  & \vdots & \vdots \\
				e^{i(n-2)\alpha_2}   & \cdots & e^{i(n-2)\alpha_{2n}}\\
				e^{in\alpha_2}   & \cdots & e^{in\alpha_{2n}}\\
				\vdots  & \vdots & \vdots \\
				e^{i(2n-1)\alpha_2}   & \cdots & e^{i(2n-1)\alpha_{2n}}
			\end{matrix}\right|\right)\\
			&=\RRe
			\left(i^{n-1}\prod_{j=2}^{2n}e^{-i(n-1)\alpha_j}\sigma_{n}(e^{i\alpha_2},\ldots,e^{i\alpha_{2n}})\prod_{2\leq j_1< j_2\leq 2n}(e^{i\alpha_{j_2}}-e^{i\alpha_{j_1}})\right)\\
			&=\RRe
			\left(i^{n-1}e^{-i(2n-1)(n-1)\alpha_0}\left(\begin{matrix}
				2n-1 \\
				n
			\end{matrix}\right)e^{in\alpha_0}(i\varepsilon_0e^{i\alpha_0})^{(2n-1)(n-1)}\prod_{2\leq j_1< j_2\leq 2n}(j_2-j_1)\right)\\
			&\quad+\ o(\varepsilon_0^{(2n-1)(n-1)})\\
			&=\left(\begin{matrix}
				2n-1 \\
				n
			\end{matrix}\right)\RRe
			\left(e^{in\alpha_0}\right)\varepsilon_0^{(2n-1)(n-1)}\prod_{2\leq j_1< j_2\leq 2n}(j_2-j_1)+\ o(\varepsilon_0^{(2n-1)(n-1)}),
		\end{aligned} 
		$$
		where  $\alpha_0:=0$ for odd $n$ and $\alpha_0:=\frac{\pi}{2n(n+1)}$ for even $n$. For all $n\geq 1$, the first term is always positive, so for all small $\varepsilon_0$, (\ref{nonzero}) always holds.
	\end{proof}

	\section*{Acknowledgments}
	We thank Stefan Steinerberger for helpful discussions and bringing our attention to \"{U}. Kuran's paper \cite{K}. We thank Bobo Hua for his helpful suggestions and constant support. We also thank Xingjian Hu for sincere help. The author is supported by Shanghai Science and Technology Program [Project No. 22JC1400100].
	
	\bibliography{refer}                        

@article {HL,
    AUTHOR = {Lin, Fang-Hua},
     TITLE = {Nodal sets of solutions of elliptic and parabolic equations},
   JOURNAL = {Comm. Pure Appl. Math.},
  FJOURNAL = {Communications on Pure and Applied Mathematics},
    VOLUME = {44},
      YEAR = {1991},
    NUMBER = {3},
     PAGES = {287--308},
      ISSN = {0010-3640},
   MRCLASS = {58G11 (35J05 35K05 58G03)},
  MRNUMBER = {1090434},
MRREVIEWER = {Robert McOwen},
       DOI = {10.1002/cpa.3160440303},
       URL = {https://doi.org/10.1002/cpa.3160440303},
}

@incollection {WWZ,
    AUTHOR = {Wen, Zhi Ying and Wu, Li Ming and Zhang, Yiping},
     TITLE = {Set of zeros of harmonic functions of two variables},
 BOOKTITLE = {Harmonic analysis ({T}ianjin, 1988)},
    SERIES = {Lecture Notes in Math.},
    VOLUME = {1494},
     PAGES = {196--203},
 PUBLISHER = {Springer, Berlin},
      YEAR = {1991},
   MRCLASS = {31A05 (58C05)},
  MRNUMBER = {1187080},
MRREVIEWER = {Shy\={u}ichi Izumiya},
       DOI = {10.1007/BFb0087772},
       URL = {https://doi.org/10.1007/BFb0087772},
}

@article {RPJ,
    AUTHOR = {Jerrard, R. P. and Rubel, L. A.},
     TITLE = {On the curvature of the level lines of a harmonic function},
   JOURNAL = {Proc. Amer. Math. Soc.},
  FJOURNAL = {Proceedings of the American Mathematical Society},
    VOLUME = {14},
      YEAR = {1963},
     PAGES = {29--32},
      ISSN = {0002-9939},
   MRCLASS = {31.10},
  MRNUMBER = {142770},
MRREVIEWER = {Z. Nehari},
       DOI = {10.2307/2033949},
       URL = {https://doi.org/10.2307/2033949},
}

@article {CH,
    AUTHOR = {De Carli, L. and Hudson, S. M.},
     TITLE = {Geometric remarks on the level curves of harmonic functions},
   JOURNAL = {Bull. Lond. Math. Soc.},
  FJOURNAL = {Bulletin of the London Mathematical Society},
    VOLUME = {42},
      YEAR = {2010},
    NUMBER = {1},
     PAGES = {83--95},
      ISSN = {0024-6093},
   MRCLASS = {31B05 (31A05)},
  MRNUMBER = {2586969},
MRREVIEWER = {Steven M. Deckelman},
       DOI = {10.1112/blms/bdp099},
       URL = {https://doi.org/10.1112/blms/bdp099},
}

@article {CAMY,
    AUTHOR = {Chang, Sun-Yung Alice and Ma, Xi-Nan and Yang, Paul},
     TITLE = {Principal curvature estimates for the convex level sets of
              semilinear elliptic equations},
   JOURNAL = {Discrete Contin. Dyn. Syst.},
  FJOURNAL = {Discrete and Continuous Dynamical Systems. Series A},
    VOLUME = {28},
      YEAR = {2010},
    NUMBER = {3},
     PAGES = {1151--1164},
      ISSN = {1078-0947},
   MRCLASS = {35J91 (35B05 35B50)},
  MRNUMBER = {2644784},
MRREVIEWER = {Bernhard Ruf},
       DOI = {10.3934/dcds.2010.28.1151},
       URL = {https://doi.org/10.3934/dcds.2010.28.1151},
}

@article {AG,
    AUTHOR = {Alessandrini, Giovanni},
     TITLE = {The length of level lines of solutions of elliptic equations
              in the plane},
   JOURNAL = {Arch. Rational Mech. Anal.},
  FJOURNAL = {Archive for Rational Mechanics and Analysis},
    VOLUME = {102},
      YEAR = {1988},
    NUMBER = {2},
     PAGES = {183--191},
      ISSN = {0003-9527},
   MRCLASS = {35J15},
  MRNUMBER = {943431},
MRREVIEWER = {Yong Geng Gu},
       DOI = {10.1007/BF00251498},
       URL = {https://doi.org/10.1007/BF00251498},
}

@article {SP,
    AUTHOR = {Salani, Paolo},
     TITLE = {Starshapedness of level sets of solutions to elliptic {PDE}s},
   JOURNAL = {Appl. Anal.},
  FJOURNAL = {Applicable Analysis. An International Journal},
    VOLUME = {84},
      YEAR = {2005},
    NUMBER = {12},
     PAGES = {1185--1197},
      ISSN = {0003-6811},
   MRCLASS = {35J60 (35B05 35B30 52A30)},
  MRNUMBER = {2178766},
       DOI = {10.1080/00036810412331297262},
       URL = {https://doi.org/10.1080/00036810412331297262},
}

@article {BM,
    AUTHOR = {Boothby, William M.},
     TITLE = {The topology of the level curves of harmonic functions with
              critical points},
   JOURNAL = {Amer. J. Math.},
  FJOURNAL = {American Journal of Mathematics},
    VOLUME = {73},
      YEAR = {1951},
     PAGES = {512--538},
      ISSN = {0002-9327},
   MRCLASS = {56.0X},
  MRNUMBER = {43456},
MRREVIEWER = {W. R. Utz},
       DOI = {10.2307/2372305},
       URL = {https://doi.org/10.2307/2372305},
}

@article {KW,
    AUTHOR = {Kaplan, Wilfred},
     TITLE = {Topology of level curves of harmonic functions},
   JOURNAL = {Trans. Amer. Math. Soc.},
  FJOURNAL = {Transactions of the American Mathematical Society},
    VOLUME = {63},
      YEAR = {1948},
     PAGES = {514--522},
      ISSN = {0002-9947},
   MRCLASS = {56.0X},
  MRNUMBER = {25159},
MRREVIEWER = {H. Whitney},
       DOI = {10.2307/1990572},
       URL = {https://doi.org/10.2307/1990572},
}

@article {F,
    AUTHOR = {Flatto, L.},
     TITLE = {A theorem on level curves of harmonic functions},
   JOURNAL = {J. London Math. Soc. (2)},
  FJOURNAL = {Journal of the London Mathematical Society. Second Series},
    VOLUME = {1},
      YEAR = {1969},
     PAGES = {470--472},
      ISSN = {0024-6107},
   MRCLASS = {31.10},
  MRNUMBER = {247118},
MRREVIEWER = {I. N. Baker},
       DOI = {10.1112/jlms/s2-1.1.470},
       URL = {https://doi.org/10.1112/jlms/s2-1.1.470},
}

@article {FDH,
    AUTHOR = {Flatto, Leopold and Newman, Donald J. and Shapiro, Harold S.},
     TITLE = {The level curves of harmonic functions},
   JOURNAL = {Trans. Amer. Math. Soc.},
  FJOURNAL = {Transactions of the American Mathematical Society},
    VOLUME = {123},
      YEAR = {1966},
     PAGES = {425--436},
      ISSN = {0002-9947},
   MRCLASS = {31.10},
  MRNUMBER = {197755},
MRREVIEWER = {W. Schneider},
       DOI = {10.2307/1994665},
       URL = {https://doi.org/10.2307/1994665},
}

@article {EAD,
    AUTHOR = {Enciso, Alberto and Peralta-Salas, Daniel},
     TITLE = {Some geometric conjectures in harmonic function theory},
   JOURNAL = {Ann. Mat. Pura Appl. (4)},
  FJOURNAL = {Annali di Matematica Pura ed Applicata. Series IV},
    VOLUME = {192},
      YEAR = {2013},
    NUMBER = {1},
     PAGES = {49--59},
      ISSN = {0373-3114},
   MRCLASS = {31A05 (31B05 35B05 35J05 41A30)},
  MRNUMBER = {3011323},
MRREVIEWER = {E. V. Malinnikova},
       DOI = {10.1007/s10231-011-0211-4},
       URL = {https://doi.org/10.1007/s10231-011-0211-4},
}

@article {KB,
    AUTHOR = {Diaz, J. I. and Kawohl, B.},
     TITLE = {On convexity and starshapedness of level sets for some
              nonlinear elliptic and parabolic problems on convex rings},
   JOURNAL = {J. Math. Anal. Appl.},
  FJOURNAL = {Journal of Mathematical Analysis and Applications},
    VOLUME = {177},
      YEAR = {1993},
    NUMBER = {1},
     PAGES = {263--286},
      ISSN = {0022-247X},
   MRCLASS = {35K65 (35B30 35J65 35J70 35K60)},
  MRNUMBER = {1224819},
MRREVIEWER = {Alexander L. Gladkov},
       DOI = {10.1006/jmaa.1993.1257},
       URL = {https://doi.org/10.1006/jmaa.1993.1257},
}

@article{BBJ,
  title={A microscopic convexity principle for the level sets of solution for nonlinear elliptic partial differential equations},
  author={Bian, BJ and Guan, P and Ma, XN and Xu, L},
  journal={Indiana Univ. Math. J},
  volume={60},
  number={1},
  pages={101--119},
  year={2011}
}

@article {CA,
    AUTHOR = {Caffarelli, Luis A. and Friedman, Avner},
     TITLE = {Convexity of solutions of semilinear elliptic equations},
   JOURNAL = {Duke Math. J.},
  FJOURNAL = {Duke Mathematical Journal},
    VOLUME = {52},
      YEAR = {1985},
    NUMBER = {2},
     PAGES = {431--456},
      ISSN = {0012-7094},
   MRCLASS = {35B99 (35J99)},
  MRNUMBER = {792181},
MRREVIEWER = {L. E. Payne},
       DOI = {10.1215/S0012-7094-85-05221-4},
       URL = {https://doi.org/10.1215/S0012-7094-85-05221-4},
}

@article {GRM,
    AUTHOR = {Gabriel, R. M.},
     TITLE = {A result concerning convex level surfaces of {$3$}-dimensional
              harmonic functions},
   JOURNAL = {J. London Math. Soc.},
  FJOURNAL = {The Journal of the London Mathematical Society},
    VOLUME = {32},
      YEAR = {1957},
     PAGES = {286--294},
      ISSN = {0024-6107},
   MRCLASS = {31.0X},
  MRNUMBER = {90662},
MRREVIEWER = {M. Brelot},
       DOI = {10.1112/jlms/s1-32.3.286},
       URL = {https://doi.org/10.1112/jlms/s1-32.3.286},
}

@article{DNJ,
  title={Conexity of level sets for solutions to elliptic ring problems},
  author={Dorevaar, Nichlas J},
  journal={Communications in Partial Differential Equations},
  volume={15},
  number={4},
  pages={541--556},
  year={1990},
  publisher={Taylor \& Francis}
}

@article {JJMO,
    AUTHOR = {Jost, J\"{u}rgen and Ma, Xi-Nan and Ou, Qianzhong},
     TITLE = {Curvature estimates in dimensions 2 and 3 for the level sets
              of {$p$}-harmonic functions in convex rings},
   JOURNAL = {Trans. Amer. Math. Soc.},
  FJOURNAL = {Transactions of the American Mathematical Society},
    VOLUME = {364},
      YEAR = {2012},
    NUMBER = {9},
     PAGES = {4605--4627},
      ISSN = {0002-9947},
   MRCLASS = {35J92 (35B05)},
  MRNUMBER = {2922603},
MRREVIEWER = {Lubomira G. Softova},
       DOI = {10.1090/S0002-9947-2012-05436-5},
       URL = {https://doi.org/10.1090/S0002-9947-2012-05436-5},
}

@article{SS,
  title={On the curvature of level sets of harmonic functions},
  author={Steinerberger, Stefan},
  journal={arXiv preprint arXiv:1307.2069},
  year={2013}
}

@article{K,
  title={On the zeros of harmonic functions},
  author={Kuran, {\"U}},
  journal={Journal of the London Mathematical Society},
  volume={1},
  number={1},
  pages={303--309},
  year={1969},
  publisher={Wiley Online Library}
}

@incollection{logunov2018nodal,
  title={Nodal sets of Laplace eigenfunctions: estimates of the Hausdorff measure in dimensions two and three},
  author={Logunov, Alexander and Malinnikova, Eugenia},
  booktitle={50 Years with Hardy Spaces: A Tribute to Victor Havin},
  pages={333--344},
  year={2018},
  publisher={Springer}
}

@book{grafakos2008classical,
  title={Classical fourier analysis},
  author={Grafakos, Loukas and others},
  volume={2},
  year={2008},
  publisher={Springer}
}

@article{ramm2022dirichlet,
  title={Dirichlet Problem with L 1 (S) Boundary Values},
  author={Ramm, Alexander G},
  journal={Axioms},
  volume={11},
  number={8},
  pages={371},
  year={2022},
  publisher={MDPI}
}

@book{strichartz2003guide,
  title={A guide to distribution theory and Fourier transforms},
  author={Strichartz, Robert S},
  year={2003},
  publisher={World Scientific Publishing Company}
}

@article{hayman1964meromorphic,
  title={Meromorphic functions},
  author={Hayman, Walter Kurt},
  journal={Oxford Math. Monogr.},
  year={1964},
  publisher={Clarendon press}
}

@article{logunov2023almost,
  title={Almost sharp lower bound for the nodal volume of harmonic functions},
  author={Logunov, Alexander and Priya ME, Lakshmi and Sartori, Andrea},
  journal={Communications on Pure and Applied Mathematics},
  year={2023},
  publisher={Wiley Online Library}
}

@article{han2013nodal,
  title={Nodal sets of solutions of elliptic differential equations},
  author={Han, Qing and Lin, Fang-Hua},
  journal={Book in preparation},
  year={2013}
}

\end{document}